 \theoremstyle{plain}
\newtheorem{thm}{Theorem}[section]
  \theoremstyle{remark}
  \newtheorem{rem}[thm]{Remark}
  \theoremstyle{plain}
  \newtheorem{cor}[thm]{Corollary}
  \theoremstyle{plain}
  \newtheorem{prop}[thm]{Proposition}
  \theoremstyle{plain}
  \newtheorem{lem}[thm]{Lemma}
  \theoremstyle{plain}
  \newtheorem{dfn}[thm]{Definition}
  \theoremstyle{remark}
  \newtheorem{ex}[thm]{Example}
\def\al{\alpha}
\def\ga{\gamma}
\def\de{\delta}
\def\ep{\varepsilon}
\def\la{\lambda}
\def\si{\sigma}
\def\th{\vartheta}
\def\vp{\varphi}
\def\om{\omega}
\def\dd{{\mathrm d}}
\def\e{{\mathrm e}}
\def\ii{{\mathrm i}}
\def\t{{}^{t}}
\def\A{{\mathcal A}}
\def\Atrop{{\mathcal A}_{\rm trop}}
\def\Alim{{\mathcal A}_{\rm lim}}
\def\C{{\mathbb C}}
\def\CC{{\mathcal C}}
\def\L{{\mathcal L}}
\def\P{{\mathcal P}}
\def\iP{\breve{P}}
\def\Q{{\mathcal Q}}
\def\R{{\mathbb R}}
\def\S{{\mathbb S}}
\def\t{{}^t}
\def\T{{\mathbb T}}
\def\Z{{\mathbb Z}}
\def\pii{\pi\mathrm{i}}
\def\u1{{\mathbbm 1}^{U(1)}}
\def\bu1{\breve{{\mathbbm 1}}^{U(1)}}
\def\baru1{\ol{{\mathbbm 1}}^{U(1)}}
\newenvironment{proofb}[1]{\vspace{2ex} \noindent \textbf{\textit{Proof of Theorem} #1.}}%
{\qed}
\def\tns{\otimes}
\def\ol{\overline}
\newcommand{\pd}[2]{\frac{\partial #1}{\partial #2}}
\def\spanC{\mathrm{span}_{\mathbb{C}}\,}
\def\nn{\nonumber}
\newcommand{\cdiag}[1]{\begin{array}{c} \xymatrix{ #1 } \end{array}}
\newcommand{\be}{\begin{equation}}
\newcommand{\ee}{\end{equation}}
\newcommand{\ba}{\begin{eqnarray}}
\newcommand{\ea}{\end{eqnarray}}
\def\dist{\mathrm{dist}}
\newcommand{\vect}[1]{\left[\begin{array}{@{}c@{}} #1 \end{array}\right]}
\def\idmax{\left[\begin{array}{@{}cc@{}} 1 & 0 \\ 0 & 1 \end{array}\right]}
\def\gmax{\left[\begin{array}{@{}cc@{}} 6 & 3 \\ 3 & 6 \end{array}\right]}
\def\Hess{\mathrm{Hess}\,}
\def\sgn{\mathrm{sign}}
\def\Vol{{\rm Vol}}
\begin{document}

\title[Large toric K\"ahler metrics, quantization and amoebas]{Toric K\"ahler metrics seen from infinity, quantization and compact tropical amoebas}

\author[T.~Baier]{Thomas Baier}
\address{Thomas Baier\\Centro de Matem\'{a}tica da Universidade do Porto, R. do Campo Alegre 687, 4169-007 Porto, Portugal}
\email{tbaier@fc.up.pt}
\author[C.~Florentino]{Carlos Florentino}
\author[J.M.~Mour\~{a}o]{Jos\'{e} M. Mour\~{a}o}
\author[J.P.~Nunes]{Jo\~{a}o P. Nunes}
\address{Carlos Florentino, Jos\'{e} M. Mour\~{a}o, Jo\~{a}o P. Nunes\\Department of Mathematics, Instituto Superior T\'{e}cnico, Av. Rovisco Pais, 1049-001 Lisboa, Portugal}
\email{cfloren, jmourao, jpnunes@math.ist.utl.pt}

\begin{abstract}
We consider the metric space of all toric K\"ahler metrics on a compact toric manifold; when ``looking at it from infinity'' (following Gromov), we obtain the tangent cone at infinity, which is parametrized by equivalence classes of complete geodesics. In the present paper, we study the associated limit for the family of metrics on the toric variety, its quantization, and degeneration of generic divisors.

The limits of the corresponding K\"ahler polarizations become degenerate along the Lagrangian fibration defined by the moment map. This
allows us to interpolate continuously between geometric quantizations in the
holomorphic and real polarizations and show that the monomial
holomorphic sections of the prequantum bundle converge to Dirac
delta distributions supported on Bohr-Sommerfeld fibers. 

In the second part, we use these families of toric metric
degenerations to study the limit of compact hypersurface amoebas and
show that in Legendre transformed variables they are described by
tropical amoebas.
We believe that
our approach gives a different, complementary, perspective on the
relation between complex algebraic geometry and tropical geometry.
\end{abstract}

\maketitle

\tableofcontents{}

%%%%%%%%%%%%%%%%%%%%%%%%%%%%%%% 1 %%%%%%%%%%%%%%%%%%%%%%%%%%%%%

\section{Introduction and main results}

Studying families of
toric K\"ahler metrics on a smooth toric variety, we investigate
limits corresponding to holomorphic
Lagrangian distributions degenerating to the real Lagrangian torus
fibration defined by the moment map.
We use methods of K\"ahler geometry and geometric quantization, which permits us to consider degenerations even though the algebraic-geometric moduli space of complex structures associated to toric varieties consists of a point only.
More precisely (see Section \ref{sect_tangentcone}), we consider the space of all toric K\"ahler metrics on a fixed very ample toric line bundle, and the limits we take along complete geodesics are parametrized in a natural way by the tangent cone at infinity of this space. Below, we study the associated limit for the corresponding family of metrics on the toric variety, its quantization, and degeneration of generic divisors. While the metric limits are distinct across the tangent cone at infinity, the limit lagrangian foliation is the same for all points.

This approach permits us to obtain the following main results. Let $P$ be a Delzant polytope and let $X_{P}$ be the associated compact toric variety \cite{De}. Let $\psi\in C^{\infty}(P)$ be a smooth
function with positive definite Hessian on $P$. Such a $\psi$ defines
a complete geodesic in the space of toric K\"ahler metrics (see Section \ref{sect_tangentcone}). Then,
\renewcommand{\labelenumi}{\bf\arabic{enumi}.}

\begin{enumerate}
\item In Theorem \ref{th1b}, we determine the weakly covariantly constant
sections of the natural line bundle on $X_P$ with respect to the (singular)
real polarization defined by the Lagrangian fibration given by the moment map
$\mu_P$. In particular, we see that they are naturally indexed by the
integer points in the polytope $P$.
\item We show in Theorem \ref{th1} that the family of K\"ahler polarizations
corresponding to the mentioned geodesic converges to the real polarization,
independently of the direction $\psi$ of deformation.
\item Theorem \ref{th2} states that the holomorphic monomial sections of the
natural line bundle converge to the Dirac delta distributions
supported on the corresponding Bohr-Sommerfeld orbit of Theorem \ref{th1b}.

{}For the class of symplectic toric manifolds,
this solves the important question, in the context of geometric
quantization, on the explicit link between K\"ahler polarized Hilbert
spaces and real polarized ones, in particular in a situation where
the real polarization is singular. For a different, but related, recent
result in this direction see \cite{BGU}.
\item We show that the compact amoebas \cite{GKZ,FPT,Mi} of complex hypersurface
varieties in $X_{P}$ converge in the Hausdorff metric to tropical
amoebas in the ($\psi$-dependent) variables defined from the symplectic
ones via the Legendre transform
\begin{eqnarray}
 {\L}_{\psi}\ :\ P & \rightarrow & {\L}_{\psi}P\subset\R^{n}\nonumber \\
u & = & {\L}_{\psi}(x)=\frac{\partial\psi}{\partial x}(x).\label{ltr}
\end{eqnarray}
This framework
gives a new way of obtaining tropical geometry from complex algebraic
geometry by degenerating the ambient toric metric rather
than taking a limit of deformations of the complex field \cite{Mi,EKL}.

Another significative difference is that the limit amoebas
described above live inside the compact image $\L_{\psi} P$ and
are tropical in the interior of $\L_\psi P$.
\end{enumerate}
Let us describe these results in more detail.

\subsection{Geometric quantization of toric varieties}
Let $P$ be a Delzant
polytope with vertices in $\Z^{n}$ defining, via the Delzant construction
\cite{De}, a compact symplectic toric manifold $(X_{P},\omega,\T^{n},\mu_{P})$,
with moment map $\mu_{P}$. Let $\P_{\R}\subset(TX_{P})_{\C}$
be the (singular) real polarization, in the sense of geometric quantization
\cite{Wo}, corresponding to the orbits of the Hamiltonian $\T^{n}$
action. The Delzant construction also defines a complex structure
$J_{P}$ on $X_{P}$ such that the pair $(\omega,J_{P})$, is
K\"ahler, with K\"ahler polarization $\P_{\C}$. In addition, the polytope
$P$ defines, canonically, an equivariant $J_{P}$-holomorphic
line bundle, $L \rightarrow X_{P}$ with curvature $-i\om$
\cite{Od}.

A result, usually attributed to Danilov and Atiyah \cite{Da,GGK},
states that the number of integer points in $P$, which are equal
to the images under $\mu_{P}$ of the Bohr-Sommerfeld (BS) fibers
of the real polarization $\P_{\R}$, is equal to the number of holomorphic
sections of $L$, i.e. to the dimensionality of $H^{0}(X_{P},L)$.

An important general problem in geometric quantization is understanding
the relation between quantizations associated to different polarizations
and, in particular, between real and holomorphic quantizations. Hitchin
\cite{Hi} has shown that, in some general situations, the bundle
of quantum Hilbert spaces over the space of deformations of the complex
structure, is equipped with a (projectively) flat connection that
provides the identification between holomorphic quantizations corresponding to
different complex structures. These results do not, however, directly apply in the present situation as the complex structure on a toric variety is rigid. Concerning real polarizations, \'{S}niatycki \cite{Sn} has shown that for
non-singular real polarizations of arbitrary (quantizable) symplectic
manifolds, the set of BS fibers is in bijective correspondence with
a generating set for the space of cohomological wave functions which
define the quantum Hilbert space in the real polarization. Explicit
geometro-analytic relations between real polarization wave
functions and holomorphic ones via degenerating families of complex structures have been found for theta functions on abelian varieties (see \cite{FMN,BMN} and references therein).
Similar studies have been performed for cotangent bundles of Lie groups \cite{Hal,FMMN}.
Some of the results in this paper, in fact, are related to these results for
the case $(T^{*}\S^{1})^{n}=(\C^{*})^{n}$, where in the present setting 
$(\C^{*})^{n}$
becomes the open dense orbit in the toric variety $X_{P}$.

As opposed to all these cases, however, the real polarization of a
compact toric variety always contains singular fibers. As was shown
by Hamilton \cite{Ham}, the sheaf cohomology used by \'{S}niatycki
only detects the non-singular BS leaves. Also, a possible model for
the real quantization that includes the singular fibers has recently
been described in \cite{BGU}.

If, on one hand, it is natural to expect that by finding a family of
(K\"ahler) complex structures degenerating to the real
polarization, the holomorphic sections will converge to delta distributions
supported at the BS fibers, on the other hand, it was unclear how
to achieve such behavior from the simple monomial sections characteristic
of holomorphic line bundles on toric manifolds (where the series
characteristic of theta functions on Abelian varieties are absent).

The detailed study of the degenerating K\"ahler structures and their quantization is made possible by Abreu's description
of toric complex structures \cite{Ab1,Ab2}, following Guillemin's
characterization of a canonical toric K\"ahler metric on $(X_P,\omega)$ determined by a symplectic potential $g_P: P\to\mathbb{R}$ \cite{Gui}. In particular, for any pair of smooth functions $\varphi,\psi$ satisfying certain convexity conditions (see Section \ref{symppots}), the functions $g_s$
\begin{equation}
 s \mapsto g_{s}=g_{P}+\varphi+s\psi,\label{gg}
\end{equation}
are admissible as symplectic potentials, i.e. define toric K\"ahler metrics for all positive $s$.

The quantization of a compact symplectic manifold in the real polarization is given 
by distributional sections.
In this case, conditions of covariant constancy of the wave functions have
to be understood as  local rather than pointwise (see, for instance, \cite{Ki})  
and the relevant piece of data is
the sheaf of smooth local sections of a polarization $\P$, $C^{\infty}(\P)$. 

Applying this approach just outlined, in Section \ref{realpol} we obtain a
description of the quantum space of the real polarization $\Q_\R$.
The main technical difference
as compared with the techniques of \cite{Sn} (applied to the present
situation in \cite{Ham}) consists in the fact that we do not only use
the sheaf of sections in the kernel of covariant differentiation, but
also the cokernel.
It is therefore not surprising that our result
differs from that of \cite{Ham}, where the dimension is given by the
number of non-degenerate Bohr-Sommerfeld fibers only. In contrast to
this, we find

\begin{thm} \label{th1b} For the singular real polarization $\P_\R$ defined by the
moment map, the space of covariantly constant distributional sections of the
prequantum line bundle $L_\om$ is spanned by one section $\de^m$ per Bohr-Sommerfeld
fiber $\mu_P^{-1}(m), m\in P\cap\Z^n$, with
\[
 \mathrm{supp}\,\de^m = \mu_P^{-1}(m) .
\]
\end{thm}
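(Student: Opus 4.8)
The plan is to reduce everything to the open dense orbit $U\cong\mathrm{int}(P)\times\T^n$, where the moment-map (action) coordinates $x\in\mathrm{int}(P)$ together with the angle coordinates $\theta\in\T^n$ bring the symplectic form to the Darboux shape $\om=\sum_j dx_j\wedge d\theta_j$. In the trivialization of $L_\om$ over $U$ adapted to these coordinates the prequantum connection is $\nabla=d-\ii\sum_j x_j\,d\theta_j$, which indeed has curvature $-\ii\om$, while the real polarization is $\P_\R=\spanC\{\partial/\partial\theta_1,\dots,\partial/\partial\theta_n\}$. Thus a distributional section $s$ is covariantly constant along $\P_\R$ precisely when it solves the $n$ first-order equations
\[
\Bigl(\frac{\partial}{\partial\theta_j}-\ii\,x_j\Bigr)s=0,\qquad j=1,\dots,n.
\]

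Since the fibers are tori, I would expand $s$ in a Fourier series along the angles, $s=\sum_{k\in\Z^n}c_k(x)\,\e^{\ii k\cdot\theta}$, with each $c_k$ a distribution in $x$. The covariant-constancy equations then decouple into the purely algebraic relations $(x_j-k_j)\,c_k(x)=0$ for all $j$ and all $k$. Here I would invoke the elementary but decisive fact from distribution theory that multiplication by $(x_j-k_j)$ annihilates only multiples of the Dirac delta at $x_j=k_j$ and no derivatives of it (recall $x\,\delta'(x)=-\delta(x)\neq0$); imposing this for every $j$ forces $c_k(x)=a_k\,\delta(x-k)$ with $a_k\in\C$, and $a_k$ can be nonzero only when $k\in\mathrm{int}(P)$. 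This already yields both assertions on $U$: the solution space is spanned by the sections $\de^m:=\delta(x-m)\,\e^{\ii m\cdot\theta}$ indexed by $m\in\mathrm{int}(P)\cap\Z^n$, exactly one per Bohr-Sommerfeld fiber, the matching of the integral Fourier index $k$ with the delta location $x=k$ being precisely the Bohr-Sommerfeld condition; and each has $\mathrm{supp}\,\de^m=\{x=m\}=\mu_P^{-1}(m)$.

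The real work, and the step I expect to be the main obstacle, is to pass from $U$ to all of $X_P$ and to account for the integer points lying on $\partial P$, whose fibers are the \emph{singular} leaves of $\P_\R$ (the lower-dimensional isotropic torus orbits, which are invisible to the open-orbit count above since there $\delta(x-m)$ is supported off $U$). Over these strata the polarization degenerates, so covariant constancy must be read in the local, sheaf-theoretic sense alluded to in the introduction rather than pointwise; concretely I would use Guillemin's and Abreu's boundary description of the toric data near $\partial P$ to pass to the coordinates in which the collapsing angle directions are resolved, and then check that the candidate $\de^m$ extends across the singular fiber as a well-defined covariantly constant distributional section with $\mathrm{supp}\,\de^m=\mu_P^{-1}(m)$. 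The delicate point is that along the degenerate directions the operator defining covariant constancy fails to be surjective, so, in contrast to the kernel-only computation of \cite{Ham2} which detects only the non-degenerate fibers, one must also retain its cokernel; I would show that this cokernel contributes exactly one further generator at each boundary integer point and that no additional independent solutions arise. Assembling the open-orbit count with this boundary analysis then gives one section $\de^m$ for every $m\in P\cap\Z^n$ with $\mathrm{supp}\,\de^m=\mu_P^{-1}(m)$, as claimed.
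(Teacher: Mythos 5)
Your open-orbit analysis is correct, and it essentially reproduces the paper's local computation for \emph{interior} fibers: Fourier expansion along $\T^n$ turns weak covariant constancy into the algebraic equations $(x_j-k_j)c_k=0$, and the fact that $xu=0$ forces $u=c\,\delta$ (no derivative terms) gives one generator per interior lattice point. The genuine gap is that everything concerning the fibers over $\partial P$ --- which is the actual content of the theorem, since the novelty relative to the kernel-only computation of \cite{Ham2} is precisely that singular Bohr--Sommerfeld fibers contribute one generator each --- is announced ("I would check\dots", "I would show\dots") rather than proved. Concretely, three things are missing. First, for $m$ in a codimension-$k$ face the formula $\delta(x-m)\,\e^{\ii m\cdot\theta}$ does not even define the candidate section, because $k$ of the angle coordinates degenerate on $\mu_P^{-1}(m)$; the correct definition is fiberwise integration of the test section against the character $\e^{\ii\,\t\ell(m)\vartheta}$ over the $(n-k)$-dimensional torus $\mu_P^{-1}(m)$. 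Second, covariant constancy of this section must actually be verified; the paper does this by passing to resolved Darboux coordinates $u_j=\sqrt{x_j}\cos\th_j$, $v_j=\sqrt{x_j}\sin\th_j$ near the singular fiber, in which the polarization is spanned by the rotational fields $u_j\partial/\partial v_j-v_j\partial/\partial u_j$ together with the surviving $\partial/\partial\th_j$, the connection form is $\tfrac12(\t u\,\dd v-\t v\,\dd u)+\t x\,\dd\th$, and the check is an explicit Fourier computation. Third, and hardest, spanning: a global covariantly constant section could a priori be supported anywhere inside the boundary divisors $\mu_P^{-1}(\partial P)$, invisibly to your interior argument. The paper excludes this by a holonomy argument (parallel transport around the surviving torus directions multiplies test sections by $\e^{2\pii\,\t a\,\mu_P}$, forcing support on Bohr--Sommerfeld fibers), then invokes the structure theorem for distributions supported at a point (finite sums of derivatives of $\de$) and kills every derivative term and every wrong Fourier mode with explicit polynomial test sections and explicit polarization vector fields. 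Your one-variable fact $x\,\delta'(x)\neq 0$ is not a substitute here: in the degenerate $(u,v)$-directions the covariant-constancy operators are rotational, not multiplication operators, so the exclusion of derivative-of-delta terms requires a separate argument.

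A secondary but telling misreading: you attribute the boundary generators to a "cokernel" contribution. In the paper, the kernel/cokernel remark in the introduction contrasts the definition of the quantum space (weak equations on \emph{distributional} sections, i.e.\ duality against $\ol{\nabla}^{*}$ on test sections) with the sheaf-cohomological approach of \'{S}niatycki and Hamilton; in the proof itself, the sections $\de^m$ attached to singular fibers arise as honest kernel elements of the weak covariant-constancy equations in the resolved coordinates, not from any separate cokernel computation. So even as a plan, your proposed route for the boundary points is not aimed at the mechanism that actually produces them.
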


But not only does the result of the quantization in the real polarization
change; actually, the weak equations of covariant constancy allow for a
continuous passage from quantization in complex to real polarizations. The
first step in this direction is to verify that the conditions
imposed on distributional sections by the set of equations of covariant
constancy converge in a suitable sense: if we denote by $\P_{\C}^{s}$ the
holomorphic polarization corresponding to the complex structure defined by
(\ref{gg}), our second main finding is

\begin{thm}
\label{th1} For any $\psi\in C^\infty_{\textrm{Hess} > 0}(P)$, we have
\[
C^{\infty}(\lim_{s\rightarrow\infty}\P_{\C}^{s})=C^{\infty}(\P_{\R}),\]
where the limit is taken in the positive Lagrangian Grassmannian
of the complexified tangent space at each point in $X_{P}$. 
\end{thm}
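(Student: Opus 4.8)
The plan is to work locally on the open dense orbit $(\C^*)^n \subset X_P$, where action-angle coordinates $(x,\theta)$ (with $x \in \mathrm{int}(P)$ the moment map image and $\theta \in \T^n$) trivialize the torus fibration, and to compute both polarizations explicitly in these coordinates. First I would recall from Abreu's formalism how the symplectic potential $g_s = g_P + \varphi + s\psi$ determines the complex structure $J_s$: the holomorphic coordinates are built from the Legendre transform, so that the complex structure is encoded by the Hessian $G_s := \Hess(g_s) = \Hess(g_P + \varphi) + s\,\Hess(\psi)$. The K\"ahler polarization $\P_\C^s$ is spanned, at each point, by the antiholomorphic vector fields, which in the $(x,\theta)$ frame take the form $\partial_{\theta_j} - \sum_k (G_s)_{jk}\, \partial_{x_k}$ up to the usual $\ii$ factors; I would write these down precisely using (\ref{gg}). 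The real polarization $\P_\R$ is spanned by the $\partial_{\theta_j}$ alone. The whole statement then reduces to a pointwise claim about Lagrangian subspaces of the complexified tangent space $T_pX_P \otimes \C$.

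**The limit in the Lagrangian Grassmannian.**

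The core of the argument is the pointwise convergence in the positive Lagrangian Grassmannian. At a fixed interior point, each antiholomorphic generator has the schematic form $v_j(s) = \partial_{\theta_j} - \ii \sum_k (G_s)_{jk}\,\partial_{x_k}$. Since $\psi \in C^\infty_+(P)$ has positive-definite Hessian, the matrix $s\,\Hess(\psi)$ has all eigenvalues tending to $+\infty$ as $s\to\infty$, so $G_s \to \infty$ in the appropriate sense. I would show that after rescaling each generator (dividing $v_j(s)$ by the growing coefficient, or equivalently passing to the line it spans in the Grassmannian), the $\partial_{\theta_j}$ part becomes negligible and the subspace $\P_\C^s$ converges to the subspace spanned by the $\partial_{x_k}$ directions. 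The subtlety is that the naive limit of the $\partial_{\theta}$-directions must be tracked carefully: I expect that the correct statement is that $\P_\C^s$ converges to the complexification of the real tangent directions to the fiber, i.e. to $\P_\R \otimes \C$, which is exactly the content needed so that $C^\infty(\lim_s \P_\C^s) = C^\infty(\P_\R)$. Positivity (the K\"ahler condition) guarantees we stay in the positive part of the Grassmannian throughout, and standard compactness of the Lagrangian Grassmannian makes the limit well-defined.

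**From pointwise convergence to the sheaf statement.**

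Having established pointwise convergence of polarizations, I would translate this into the equality of sheaves of smooth sections. A smooth local section is $\P$-covariantly constant iff it is annihilated by the covariant derivatives along all generators of $\P$; as the generators $v_j(s)$ converge (after rescaling) to generators of $\P_\R$, the corresponding covariant-constancy equations converge, and a section smooth and covariantly constant for the limit polarization is characterized by the limiting equations. I would argue that the condition of being holomorphic (covariantly constant along $\P_\C^s$) passes in the limit exactly to the condition of being constant along the fibers in the angle directions, which defines $C^\infty(\P_\R)$.

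**The main obstacle.**

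The hardest part will be handling the behavior near the boundary $\partial P$, where the real polarization is singular (the torus fibers degenerate over the faces of the polytope) and where the symplectic potential $g_P$ has the logarithmic singularities built in via Guillemin's boundary conditions. The clean rescaling argument above works on the open orbit, but the convergence in the Lagrangian Grassmannian must be shown to extend, in a controlled way, across the singular strata, and one must check that the limiting object is genuinely $\P_\R$ including over the singular fibers of Theorem \ref{th1b}. I anticipate that verifying the limit is uniform enough near $\partial P$, and that no anomalous directions survive from the divergence of $g_P$ near the boundary, will require the most care; the interior computation, by contrast, should be essentially a linear-algebra limit in the Grassmannian.
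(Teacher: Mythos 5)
Your interior computation is in substance the paper's first step, though note a slip: in the frame you chose, the antiholomorphic generators are $\partial/\partial y^s_j-\ii\,\partial/\partial\vartheta_j$ with $\partial/\partial y^s_j=\sum_k (G_s^{-1})_{jk}\,\partial/\partial x_k$, so it is the \emph{inverse} Hessian that multiplies the $\partial/\partial x_k$; its entries decay as $s\to\infty$ and the span converges directly (no rescaling needed) to $\spanC\{\partial/\partial\theta_j\}$, the fiber directions --- not to the span of the $\partial/\partial x_k$, as your schematic form would give. The genuine gap, however, is in what you propose to do at the boundary. You plan to ``check that the limiting object is genuinely $\P_\R$ including over the singular fibers'' and that ``no anomalous directions survive.'' This is false, and it is exactly the subtle point of the theorem. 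The paper proves (Theorem \ref{oanterior}) that in a vertex chart
\[
\P^{\infty}=\P_{\R}\oplus\spanC\{\partial/\partial w_j:w_{j}=0\},
\]
i.e.\ along the compactification loci the limit retains genuinely complex directions. The reason is that the lines $\C\cdot\partial/\partial w_j$ at points where $w_j=0$ are \emph{independent of the symplectic potential}: two vertex charts built from potentials $g\neq\widetilde g$ are related by $w_j=\widetilde w_j f$ with $f$ real and factoring through $\mu_P$, so at $\widetilde w_j=0$ one has $\C\cdot\partial/\partial w_j=\C\cdot\partial/\partial\widetilde w_j$. These directions are therefore frozen under the deformation and survive in the limit; the pointwise limit in the Grassmannian does \emph{not} equal $\P_\R$ over $\partial P$, and any argument aiming at that pointwise equality cannot be completed.

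What is missing is the idea that reconciles this with the statement: the asserted equality is one of sheaves of smooth local sections of the distributions --- note that $C^{\infty}(\P)$ in this paper means smooth vector fields lying in $\P$, not covariantly constant sections of $L_\om$, which is a second misreading in your third paragraph. The paper's resolution is a reality/continuity argument: a smooth complexified vector field $\xi$ lying in $\P^{\infty}$ lies in $\P_\R$ on the open dense orbit, where $\P_\R$ is real, hence $\xi=\ol{\xi}$ on a dense set and, by continuity, everywhere; but a real tangent vector has no nonzero component along $\spanC\{\partial/\partial w_j:w_j=0\}$ (writing $w_j=u_j+\ii v_j$, if $c\,(\partial/\partial u_j-\ii\,\partial/\partial v_j)$ is real then $c=0$). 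Hence the extra boundary directions carry no smooth sections, and $C^{\infty}(\P^{\infty})=C^{\infty}(\P_{\R})$ even though $\P^{\infty}\neq\P_{\R}$ pointwise. Without this step (or an equivalent one), your outline stalls precisely at the place you yourself identified as the hardest.
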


Identifying holomorphic sections with distributional sections in the usual 
way (as in \cite{Gun}, but making use of the Liouville measure on the base) we
may actually keep track of the monomial basis of holomorphic sections as $s$
changes and show that they converge in the space of distributional sections.

Consider the prequantum bundle $L_\om$ 
equipped with the holomorphic structure defined by the prequantum connection
$\nabla$, defined in (\ref{conn2}), and by the complex structure $J_{s}$ corresponding
to $g_s$ in (\ref{gg}). Let  $\iota:C^\infty(L_\om)\to (C^\infty_c(L_\om^{-1}))'$ be the
natural injection of the space of smooth into distributional sections defined in
(\ref{iota}). For any lattice point  $m\in P\cap \Z^n$, let
$\sigma^m_s\in C^\infty(L_\om)$ be the associated $J_s-$holomorphic section 
of $L_\om$ and $\delta^m$ the delta distribution from the previous Theorem.
Our third main result is the following: 

\begin{thm}
\label{th2} 
For any $\psi$ strictly convex in a neighborhood of $P$
and $m\in P\cap\Z^{n}$, consider the family of $L^1-$normalized $J_s$-holomorphic sections
\[
 \cdiag{
 \R^{+}\ni s\mapsto\xi_{s}^{m}:=\frac{\si_{s}^{m}}{\|\si_{s}^{m}\|_{1}}
\in C^{\infty}(L_\om) \!\!\!\! \ar@{}[r]|-{\subset}^-{\iota} & \!\!\!\! (C^\infty_c(L_\om^{-1}))' }.
\]
Then, as $s\to\infty$, $\iota(\xi_s^m)$ converges to $\de^{m}$ in $(C^\infty_c(L_\om^{-1}))'$.
\end{thm}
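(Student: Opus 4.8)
The plan is to reduce the distributional convergence to a Laplace-type concentration of an explicit family of measures on the polytope $P$, expressed most naturally in the Legendre variables $u=\L_\psi(x)$. First I would compute the pointwise norm of the monomial section. Since $\si_s^m$ is $\T^n$-equivariant and the metric $h_s$ determined by $g_s$ is toric, the function $\|\si_s^m\|_{h_s}$ is $\T^n$-invariant and descends to a function $a_s^m$ of $x=\mu_P$ alone. On the open orbit Abreu's description uses logarithmic coordinates $w=\rho+\ii\th$ with $\rho=\nabla g_s(x)$, in which $\si_s^m$ is the monomial $\e^{\langle m,w\rangle}$ and the Hermitian metric is $\e^{-2\phi_s}$ for $\phi_s$ the K\"ahler potential, i.e. the Legendre transform of $g_s$. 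A direct computation then gives, up to an $x$-independent constant,
\[
a_s^m(x)=\exp\big(H_{m,s}(x)\big),\qquad H_{m,s}(x)=g_s(x)+\langle m-x,\nabla g_s(x)\rangle .
\]
Two features matter. The map $g\mapsto g+\langle m-x,\nabla g\rangle$ is linear, so $H_{m,s}=r_m+s\,h_m$ exactly, with $r_m=H_m[g_P+\vp]$ an $s$-independent remainder (smooth in $\mathrm{int}\,P$, with the logarithmic Guillemin singularity along $\partial P$) and $h_m=H_m[\psi]$. Moreover $h_m(x)=\langle m,u\rangle-\psi^*(u)$ in the Legendre variable $u=\nabla\psi(x)=\L_\psi(x)$, with $\psi^*$ the strictly convex Legendre transform of $\psi$; hence $h_m$ is strictly concave in $u$ and attains a unique, global, nondegenerate maximum at $u=\L_\psi(m)$, i.e. at $x=m$.

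Next I would unwind the pairing defining $\iota$. In a unitary trivialization of $L_\om$ over the open orbit the section is $\si_s^m=a_s^m(x)\,\e^{\ii\langle m,\th\rangle}$, while a test section $\tau\in C^\infty(L_\om^{-1})$ is a smooth $\T^n$-periodic amplitude $t(x,\th)$ in the dual frame. As the Liouville measure is $\dd x\,\dd\th$ on the open orbit (a full-measure set) and $\|\si_s^m\|_1=(2\pi)^n\int_P a_s^m\,\dd x$, the pairing collapses to
\[
\iota(\xi_s^m)(\tau)=\frac{\int_P a_s^m(x)\,T(x)\,\dd x}{\int_P a_s^m(x)\,\dd x},\qquad T(x)=\frac{1}{(2\pi)^n}\int_{\T^n}\e^{\ii\langle m,\th\rangle}\,t(x,\th)\,\dd\th .
\]
The covariantly constant section spanning $\de^m$ in Theorem \ref{th1b} carries precisely the phase $\e^{\ii\langle m,\th\rangle}$ (single-valued exactly by the Bohr--Sommerfeld condition $m\in\Z^n$), so $\de^m(\tau)=T(m)$. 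The theorem is therefore equivalent to the weak convergence of probability measures $a_s^m(x)\,\dd x\big/\!\int_P a_s^m\,\dd x\rightharpoonup\de_{x=m}$ on $P$, for which the $L^1$-normalization of $\xi_s^m$ automatically fixes the total mass and pins down the normalization of $\de^m$.

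Finally I would establish this concentration by Laplace's method: $a_s^m=\exp(s\,h_m+r_m)$ with $h_m$ strictly concave in $u$ and maximized at $x=m$, so the peak has width $O(s^{-1/2})$ and the normalized measure localizes at $m$. For $m\in\mathrm{int}\,P$ this is routine. The main obstacle is the boundary case $m\in\partial P$, where both the fiber $\mu_P^{-1}(m)$ and the remainder $r_m$ are singular: I would show that the exponential localization from the strictly concave leading term $s\,h_m$ dominates the at-most-logarithmic growth of $r_m$ near $\partial P$, so that the Guillemin singularity is harmlessly integrable and does not displace the peak, and I would check that as $x\to m\in\partial P$ the collapse of the torus directions of $X_P$ forces any smooth $t(m,\cdot)$ to be constant along them, so that $T(m)$ coincides with the pairing of $\tau$ against the covariantly constant section on the singular fiber $\mu_P^{-1}(m)$. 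Controlling this boundary behaviour uniformly is where the real work lies.
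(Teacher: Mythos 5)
Your proposal follows essentially the same route as the paper's proof: the pointwise norm $\e^{-h^s_m}$ of the monomial section with the linear splitting $h^s_m=h^0_m+sf_m$ (your $r_m+s\,h_m$ up to sign), the reduction of the pairing $\iota(\xi^m_s)(\tau)$ to an integral over $P$ of a normalized density against the fiberwise Fourier coefficient $\widehat{\tau}(\cdot,-m)$, and Laplace-type concentration at $x=m$, which is exactly the content of the paper's Lemma \ref{convex}. The boundary issues you flag as ``the real work'' are resolved in the paper by noting that $\e^{-h^0_m}$ extends continuously to all of $P$ (the Guillemin logarithms enter only through terms $\ell_r(m)\log\ell_r(x)$ with coefficients $\ell_r(m)\geq 0$, so they aid rather than obstruct localization) and by the smoothness of the fiberwise Fourier coefficients in the vertex charts, already established in the proof of Proposition \ref{del}, which gives $\widehat{\tau}(x,-m)\to\widehat{\tau}(m,-m)=\de^m(\tau)$.
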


\begin{rem}
Note that the sections $\sigma_{s}^{m},\sigma_{s}^{m'}$ are $L^{2}-$orthogonal
for $m\neq m'$. 
\end{rem}

\begin{rem}
We note that
the set up above can be easily generalized to a larger family of deformations given 
by symplectic potentials  of the form $g_s=g_P+\varphi +\psi_s$, where $\psi_s$ is a family of smooth strictly 
convex functions on $P$, such that $\frac{1}{s}\psi_s$ has a strictly convex limit in the 
$C^2$-norm in $C^\infty (P)$. 
\end{rem}

\begin{rem}
For non-compact symplectic toric manifolds $X_P$, the symplectic potentials in (\ref{gg})
still define compatible complex structures on $X_P$; however, Abreu's theorem no longer
holds. Theorems \ref{th1} and \ref{th2} remain valid in the non-compact case,
if one assumes uniform strict convexity of $\psi$ for the latter.
\end{rem}

As mentioned above, these results provide a setup for relating
quantizations in different polarizations. In particular, Theorem \ref{th2} 
gives an
explicit analytic relation between holomorphic and real wave functions
by considering families of complex structures converging to a degenerate
point.

\subsection{Compact tropical amoebas}

Let now $Y_{s}$ denote the one-parameter family of hypersurfaces
in $(X_{P},J_{s})$ given by
\begin{equation}
 \label{Ys} Y_{s}=\left\{ p\in X_{P}\ :\ \sum_{m\in P\cap\Z^{n}}a_{m}\e^{-sv(m)}\si_{s}^{m}(p)=0\right\} ,
\end{equation}
where 
$a_{m}\in\C^{*},v(m)\in\R,\forall m\in P\cap\Z^{n}$.
The image of $Y_{s}$ in $P$ under the moment map $\mu_{P}$ is naturally
called the compact amoeba of $Y_{s}$. Note that $Y_s$ is a complex submanifold
of $X_P$ equipped with the K\"ahler structure $(J_s,\gamma_s=\omega(.,J_s .))$.
This is in contrast with the compact amoeba of \cite{GKZ,FPT,Mi} where the
K\"ahler structure is held fixed.

Using the family of Legendre transforms  in (\ref{ltr}) associated to the potentials $g_s$ on
the open orbit, we relate the intersection $\mu_P(Y_s)\cap\iP$
with the $Log_t$-amoeba of \cite{FPT,Mi} for finite $t=\e^s$. For $s\to\infty$,
the Hausdorff limit of the compact amoeba is then characterized by
the tropical amoeba $\Atrop$ defined as the support of non-differentiability, or
corner locus, of the piecewise smooth continuous function $\R^n\to \R$,
\begin{eqnarray}
\nn u & \mapsto & \max_{m\in P\cap\Z^{n}}\left\{ \t m\cdot u-v(m)\right\} .
\end{eqnarray}

In Section \ref{cpamoeba}, we show that, as $s\to\infty$, the amoebas $\mu_P(Y_s)$
converge in the Hausdorff metric to a limit amoeba $\Alim$. The relation
between $\Atrop$ and $\Alim$ is given by a projection $\pi:\R^n\to\L_\psi P$
(defined in Lemma \ref{proj}, see Figure \ref{fig1}) determined by $\psi$ and the combinatorics of
the fan of $P$. The fourth main result is, then:

\begin{thm} \label{amoeba_thm} The limit amoeba is given by
\[
 \L_\psi \Alim = \pi \Atrop .
\]
\end{thm}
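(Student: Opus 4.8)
The plan is to relate the compact amoeba $\mu_P(Y_s)$ to a classical $\mathrm{Log}_t$-amoeba on the open orbit via the Legendre transform, and then take the tropical limit on that side. So the first step is to work on the open dense orbit $(\C^*)^n \subset X_P$, where the holomorphic sections $\si_s^m$ have explicit monomial expressions. On this orbit, the action--angle (symplectic) coordinates $(x,\theta)\in \mathring P \times \T^n$ are related to the holomorphic logarithmic coordinates $w = u + \ii\theta$ by the Legendre transform $u = \L_\psi(x)$ (at least in the leading order in $s$ coming from $s\psi$). The key computation is to find the $s$-dependent asymptotics of $|\si_s^m(p)|$ in these coordinates: I expect that, after normalizing, $\log|\si_s^m|$ is dominated by a term of the form $s(\t m\cdot u - K_s(u))$ plus lower-order corrections, where $K_s$ is the relevant potential. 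Establishing this precise asymptotic form — i.e. that in the Legendre-transformed variable $u$ the log-modulus of each monomial section behaves to leading order like a linear function $\t m \cdot u$ minus a common convex function — is the crux that connects our geometric setup to the standard amoeba picture.

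Once that asymptotic is in hand, the defining equation of $Y_s$, namely $\sum_m a_m \e^{-sv(m)}\si_s^m = 0$, becomes, on the level of moduli, the statement that the maximum among the exponential weights
\[
s\bigl(\t m\cdot u - v(m)\bigr) + (\text{lower order})
\]
is attained by at least two distinct indices $m$. This is precisely the Maslov/Viro-type dequantization that produces tropical geometry: dividing by $s$ and letting $s\to\infty$, a point $u$ lies in the limit set on the orbit if and only if the maximum in $\max_{m\in P\cap\Z^n}\{\t m\cdot u - v(m)\}$ is achieved at least twice, i.e. $u$ lies on the corner locus $\Atrop$. I would make this rigorous using the standard fact that for finitely many functions the zero set of a sum of exponentials with rates $c_m(s)/s \to c_m$ converges (in Hausdorff distance on compact sets) to the non-smooth locus of $\max_m c_m$; the uniform convergence of the rate functions supplied by the asymptotics of the previous paragraph is what feeds this machine.

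The remaining, and more delicate, step is to pass from the open-orbit statement to the global compact statement in $\L_\psi P$, and to identify the correct map relating $\Atrop \subset \R^n$ and $\Alim$. The point is that the Legendre image $\L_\psi P$ is bounded (because $\psi$ and hence its gradient are controlled up to $\partial P$), whereas $\Atrop$ naturally sits in all of $\R^n$; rays of $\Atrop$ going off to infinity must get folded back onto the boundary strata of $\L_\psi P$. This is exactly what the projection $\pi:\R^n\to \L_\psi P$ of Lemma \ref{proj} is designed to accomplish, using the combinatorics of the fan of $P$ to match the asymptotic directions of $\Atrop$ with the facets and faces of $P$. I would therefore prove the identity $\L_\psi\Alim = \pi\Atrop$ in two inclusions: on the interior, the open-orbit analysis above gives $\L_\psi(\Alim\cap \mathring P) = \Atrop \cap \L_\psi(\mathring P) = \pi(\Atrop)\cap\L_\psi(\mathring P)$ directly; near $\partial P$, one must show that the Hausdorff limit of the amoebas accumulates on exactly the image under $\pi$ of the unbounded parts of $\Atrop$, which requires controlling the sections $\si_s^m$ as $p$ approaches the toric divisors.

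I expect the main obstacle to be precisely this boundary behavior: the monomial sections $\si_s^m$ degenerate (some vanishing, others blowing up relative to each other) as one approaches the lower-dimensional torus orbits, so the clean leading-order asymptotic valid in the interior must be re-derived stratum by stratum, and one must check that the tropical corner locus glues correctly across these strata via $\pi$. Controlling these limits uniformly — including the interchange of the $s\to\infty$ limit with approach to $\partial P$, and verifying that no spurious pieces of the limit amoeba appear on the boundary and none are lost — is where the real work lies, and it is where the Delzant/fan combinatorics encoded in $\pi$ must be invoked carefully. Everything else is the by-now-standard tropicalization of a sum of exponentials, but the compactness and the precise boundary matching are what make this theorem more than a routine dequantization.
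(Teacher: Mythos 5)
Your overall architecture is the same as the paper's: relate the compact amoeba to the classical $Log_t$-amoeba on the open orbit via a Legendre transform, quote the standard convergence $\A_s\to\Atrop$, and account for the compactification by the projection $\pi$. One improvement you should make even at the level of the plan: the relation to the $Log_t$-amoeba is not merely asymptotic but \emph{exact} at every finite $s$, provided one uses the rescaled Legendre transform $\kappa_s=\frac 1s\L_{g_s}=\L_\psi+\frac 1s\pd{(g_P+\varphi)}{x}$; then $\kappa_s\circ\mu_P(\breve{Y}_s)=\A_s$ for $t=\e^s$, and since no component of $Y_s$ lies in the toric boundary one has $Y_s=\overline{\breve{Y}_s}$, hence $\mu_P(Y_s)=\overline{\kappa_s^{-1}\A_s}$. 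This reduces the whole theorem to understanding the maps $\kappa_s^{-1}$, and \emph{no} analysis of the sections $\si_s^m$ near the toric divisors is ever needed.

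The genuine gap is that you explicitly defer the decisive step (``this is where the real work lies'') without supplying its mechanism, and that mechanism is a concrete computation, not fan combinatorics in the abstract: as $x\to p\in\partial P$, the gradient $\pd{(g_P+\varphi)}{x}$ blows up logarithmically \emph{precisely into the normal cone} $\CC_p(P)$ (Lemma \ref{grad_boundary}, from the Guillemin potential $g_P=\frac 12\sum_r\ell_r\log\ell_r$). Consequently the discrepancy $\kappa_s-\L_\psi=\frac 1s\pd{(g_P+\varphi)}{x}$ sweeps out these cones, which is exactly what forces $\L_\psi\circ\kappa_s^{-1}\to\pi$ pointwise on all of $\R^n$ and is where the projection $\pi$ of Lemma \ref{proj} actually originates. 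Pointwise convergence is then upgraded to Hausdorff convergence using compactness of $P$, the separation Lemma \ref{nbhd} (which rules out spurious accumulation, and again rests on the logarithmic boundary behavior of $\pd{g_P}{l}$), and the uniform lower Lipschitz bound $\|\kappa_s(x)-\kappa_s(x')\|\geq c\|x-x'\|$ coming from $\Hess\psi>0$, which is what transfers $\dist(\A_s,\Atrop)\to 0$ through $\kappa_s^{-1}$. Your fallback plan of rederiving asymptotics ``stratum by stratum'' on the divisors would moreover not suffice as stated: Hausdorff limits do not localize, so $\Alim\cap F$ is \emph{not} the limit of the face amoebas $\mu_P(Y_s)\cap F$; it also contains folded images under $\pi$ of unbounded legs of $\Atrop$, produced by points of $\breve{Y}_s$ whose moment-map images drift to $\partial P$ as $s\to\infty$ --- precisely the phenomenon the cone analysis captures and a per-stratum tropicalization would miss.
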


\begin{rem}
\label{55} There is a set with non-empty interior of valuations $v(m)$ in (\ref{Ys}), the convex
projection $\pi \Atrop$ coincides with the intersection
of $\Atrop$ with the image of the moment polytope $\L_\psi P$. In 
particular, if $\psi(x)=\frac{x^{2}}{2}$,
then $\L_{\psi}={\rm Id}_{P}$ and $\Alim$ is a (compact part) of a tropical
amoeba, see Figure \ref{fig2}.
\end{rem}

\begin{rem} Note that for quadratic $\psi(x)=\frac{\t xGx}{2}+\t bx$, where
 $\t G = G > 0$, the limit amoeba $\Alim \subset P$ itself is piecewise linear.
\end{rem}

Under certain conditions concerning $\psi$, this construction produces naturally a singular affine manifold $\L_\psi\Alim$,
with a metric structure (induced from the inverse of the Hessian of $\psi$).
In the last Section we comment on the possible relation of this result
to the study of  mirror symmetry from the SYZ viewpoint.

\section{Preliminaries and notation}
\label{prelim}

Let us briefly review a few facts concerning compatible complex structures
on toric symplectic manifolds and also fix some notation. For reviews on toric
varieties, see \cite{Co,Da,dS}. Consider a Delzant lattice
polytope $P\subset\R^{n}$ given by \begin{equation}
P=\left\{ x\in\R^{n}\ :\ \ell_{r}(x)\geq0,r=1,\dots,d\right\} ,\label{dp}\end{equation}
 where \begin{eqnarray}
\ell_{r}\ :\ \R^{n}\  & \rightarrow & \ \R\nonumber \\
\ell_{r}(x) & = & \t\nu_{r} x-\lambda_{r},\end{eqnarray}
$\lambda_{r}\in\Z$ and $\nu_{r}$ are primitive vectors of the lattice
$\Z^{n}\subset\R^{n}$, inward-pointing and normal to the $r$-th facet, i.e.
codimension-$1$ face of $P$. We denote the interior of $P$ by $\iP$, and the convex hull of $k$ points $v_1,\dots,v_k$ by $\langle v_1,\dots,v_k \rangle$.

Let $X_{P}$ be the associated smooth toric variety, with moment map
$\mu_P:X_P\to P$. On the open dense orbit $\breve X_P=\mu_P^{-1}(\iP)
\cong\iP\times\T^{n}$, one considers symplectic, or action-angle, coordinates
$(x,\theta)\in\iP\times\T^{n}$ for which the symplectic form is the
standard one, $\omega =\sum_{i=1}^{n}dx_{i}\wedge d\theta_{i}$ and $\mu_P(x,\theta)=x$.

\subsection{Symplectic potentials for toric K\"ahler structures}\label{symppots} Recall (\cite{Ab1,Ab2}) that any compatible complex structure on $X_P$ can
be written via a symplectic potential $g=g_{P}+\varphi$, where $g_{P}\in
C^{\infty}(\iP)$ is given by \cite{Gui}
\begin{equation}
g_{P}(x)=\frac{1}{2}\sum_{r=1}^{d}\ \ell_{r}(x)\log\ell_{r}(x),\label{gpp}
\end{equation}
and $\vp$ belongs to the convex set,  
$C_{g_{P}}^{\infty}(P)\subset C^{\infty}(P)$,
of functions $\vp$ such that ${\rm Hess}_{x}(g_{P}+\vp)$
is positive definite on $\breve P$ and satisfies the regularity
conditions
\begin{equation} \label{detHess}
{\rm det}({\rm Hess}_{x}(g_{P}+\vp))=\left[\alpha(x)\Pi_{r=1}^{d}\ell_{r}(x)\right]^{-1},
\end{equation}
for $\alpha$ smooth and strictly positive on $P$, as described
in \cite{Ab1,Ab2}.

The complex structure $J$ associated to a potential $g=g_P+\vp$ and the K\"ahler metric 
$\gamma=\omega(\cdot, J\cdot)$ are given by 
\begin{equation}
J=\left(\begin{array}{rl}
0 & -G^{-1}\\
G & 0\end{array}\right)\ ;\ \gamma=\left(\begin{array}{rl}
G & 0\\
0 & G^{-1}\end{array}\right),\label{cs}
\end{equation}
where $G=\Hess_x g > 0$ is the Hessian of $g$.
For recent applications of this result see e.g. \cite{Do,MSY,SeD}.

The complex coordinates are related with the symplectic ones by a
bijective Legendre transform \[
\iP\ni x\mapsto y=\frac{\partial g}{\partial x}\in\R^{n},\]
that is,  $g$ fixes an equivariant biholomorphism $\iP\times\T^{n}\cong(\C^{*})^{n}$, \[
 \iP\times\T^n \ni (x,\theta) \mapsto w=\e^{y+\ii \theta} \in (\C^*)^n.
\] 
The inverse transformation is given by $x=\frac{\partial h}{\partial y}$, where
\begin{equation}
\label{herm}
h(y)=\t x(y)y-g(x(y)).
\end{equation}

Let us describe coordinate charts covering the rest of $X_P$, that is, the loci of
compactification. 
Using the Legendre transform associated to 
the symplectic potential $g$, one can  
describe, in particular, holomorphic charts around the fixed points of
the torus action. Consider, for any vertex $v$ of $P$,
any ordering of the $n$ facets that contain $v$; upon reordering the
indices, one may suppose that
$$
 \ell_1(v) = \dots = \ell_n(v) = 0.
$$
Consider the affine change of variables on $P$
$$
 l_i = \ell_i(x) = \t\nu_i x-\lambda_i, \quad \forall i=1,\dots,n, \textrm{ i.e. }
 \quad l = Ax-\lambda,
$$
where $A=(A_{ij}=(\nu_i)_j)\in Gl(n,\Z)$ and $\lambda\in\Z^n$. This induces
a change of variables on the open orbit $\breve{X}_P$,
$$
 \iP\times\T^n \ni (x,\theta) \mapsto (l=Ax-\lambda,\vartheta={}^t A^{-1}\theta)
 \in (AP-\lambda)\times\T^n \subset (\R^+_0)^n\times\T^n
$$
such that $\omega=\sum \dd x_i\land\dd \theta_i = \sum
 \dd l_i\land \dd \vartheta_i$. 

Consider now the union
$$
 \iP_{v} := \{v\}\cup \bigcup_{\scriptsize{\begin{array}{@{}c@{}} F\ \rm{ face }
 \\ v\in\ol{F} \end{array}}} \breve{F}
$$
of the interior of all faces adjacent to $v$ and set $V_{v}:=\mu_P^{-1}(\iP_{v})
\subset X_P$. This is an open neighborhood of the fixed point $\mu_P^{-1}(v)$, and
it carries a smooth chart $V_{v} \to \C^n$ that glues to the chart on the open
orbit as
\begin{equation}
\label{tf}
 (A\iP-\lambda)\times\T^n \ni (l,\vartheta) \mapsto w_{v} = ( w_{l_j}= \e^{ y_{l_j}+\ii
 \vartheta_j} )_{j=1}^n \in \C^n ,
\end{equation}
where $y_{l_j}=\pd{g}{l_j}$.
We will call this ``the chart at $v$'' for short, dropping any reference to the choice
of ordering of the facets at $v$ to disburden the notation; usually we will then write
simply $w_j$ for the components of $w_v$. It is easy to see that a change of coordinates 
between two such
charts, at two vertices $v$ and $\widetilde{v}$, is holomorphic. 
The complex manifold, $W_P$, obtained by taking the vertex complex charts and the transition 
functions between them, 
does not depend on the symplectic potential. 
It will be convenient for us to distinguish between $X_P$ and $W_P$, 
noting that the $g$-dependent map $\chi_g: X_P\to W_P$ described locally by (\ref{tf}), 
introduces the $g$-dependent complex structure (\ref{cs}) on $X_P$, making $\chi_g$ a 
biholomorphism.

\subsection{The prequantum line bundle} In the same way, the holomorphic line bundle $L_P$ on $W_P$ determined canonically by the polytope $P$ \cite{Od}, 
induces, via the pull-back by $\chi_g$, an holomorphic structure on the (smooth) prequantum line bundle $L_\om$ on $X_P$, as will be described 
below:
\[
 \cdiag{ L_\om \ar[r] \ar[d] & L_P \ar[d] \\
 X_P \ar[r]^{\chi_g} & W_P \\
 }.
\]
Using the charts $V_v$ at the fixed points, one can describe $L_P$ by
\[
 L_P = \left( \coprod_{v} V_v\times\C \right)/\sim,
\]
where the equivalence relation $\sim$ is given by the transition functions
for the local trivializing holomorphic sections
${\mathbbm 1}_v(w_v)=(w_v,1)$ for $p\in V_v$,
\[
 {\mathbbm 1}_{v} = w_{\widetilde{v}}^{(\widetilde{A}A^{-1}\lambda-\widetilde{\lambda})}
  {\mathbbm 1}_{\tilde v},
\]
on intersections of the domains. Here, we use the data of the affine
changes of coordinates
\[
 l=\ell(x)=Ax-\lambda, \qquad \widetilde{l} = \widetilde{\ell}(x) = \widetilde{A}x-\widetilde{\lambda},
\]
associated to the vertices $v$ and $\widetilde{v}$ (and the order of the
facets there). 

Note that one also has a trivializing section on the open orbit, such that for a vertex 
$v\in P$, ${\mathbbm 1}_v=w_{v} \breve {\mathbbm 1}$  on $\breve W_P$. However, note that the sections 
${\mathbbm 1}_v$ extend to global holomorphic sections on $W_P$ while the extension of $\breve {\mathbbm 1}$ will, in general, 
be meromorphic and will be holomorphic iff $0\in P\cap \Z^n$. 
Sections in the standard basis 
$\{\sigma^m\}_{m\in P\cap\Z^n}\subset H^0(W_P,L_P)$ read $\sigma^m = \sigma^m_{\breve P} \breve{\mathbbm 1}
= \sigma_v^m {\mathbbm 1}_v$, with
\[
 \sigma_{\iP}^m(w) = w^m, \qquad
 \sigma_v^m(w_v) = w_v^{\ell(m)},
\]
in the respective domains.

The prequantum line bundle $L_\om$ on the symplectic manifold $X_P$ is, analogously,
defined by unitary local trivializing sections $\u1_v$ and transition functions
\be
\label{tfu}
 \u1_v = \e^{\ii\t(\widetilde{A}A^{-1}\lambda-\widetilde{\lambda})\tilde\vartheta}
\u1_{\tilde v}.
\ee
$L_\om$ is equipped with the compatible prequantum connection $\nabla$,
of curvature $-\ii\om$, defined by
\be
\label{conn2}
\nabla \u1_v= -\ii\t(x-v)\dd\theta\, \u1_v = -\ii\t l \,\dd\vartheta\, \u1_v,
\ee
where we use $\ell(v)=0$.

The bundle isomorphism relating $L_\om$ and $L_P$ is determined by 
\begin{eqnarray}
\nonumber
\u1_v =\e^{h_v\circ \mu_P} \chi_g^* \mathbbm{1}_v , \qquad 
\bu1 =\e^{h\circ \mu_P} \chi_g^*\breve{\mathbbm{1}},
\end{eqnarray}
where, for $m\in \Z^n$, $h_m(x)=(x-m)\pd{g}{x}-g(x)$ and $h$ is the function 
in (\ref{herm}) defining the inverse Legendre transform. 

In these unitary local trivializations, the sections $\chi_g^* \sigma^m$ read 
\[
\chi_g^*\sigma^m = \e^{-h_m\circ \mu_P}\e^{\ii \t m\theta} \bu1 =
\e^{-h_{m}\circ\mu_P }\e^{\ii\t \ell(m)\vartheta}\u1_v,
\]
where, after an affine change of coordinates $x\mapsto\ell(x)$ 
on the moment polytope, as above, we get $h_m(l) = \t(l-\ell(m))\pd{g}{l}-g(l)$.
Then, for all $\sigma\in H^0(W_P,L_P)$,  $\chi_g^* \sigma \in C^\infty (L_\om)$ is holomorphic, that is 
$$
\nabla_{\ol \xi} \chi_g^*\sigma =0, 
$$
for any holomorphic vector field $\xi$. That is, such sections are polarized with respect to the  
distribution of holomorphic vector fields on $X_P$ (see, for instance, \cite{Wo}).  

To treat the real polarization defined by the moment map, we will find it necessary to extend the operator of covariant differentiation
from smooth to distributional sections: we consider the injection of smooth
in distributional sections determined by Liouville measure,
\begin{equation}\label{iota}
 \begin{array}{rccl}
  \iota : & C^\infty(L_\omega\vert_U) & \longrightarrow & C^{-\infty}(L_\omega\vert_U) =
  \left( C^\infty_c(L_\omega^{-1}\vert_U) \right)' \\
 & s & \mapsto & \iota s (\phi) = \int\limits_U s \phi \frac{\om^n}{n!}
 \end{array} ,
\end{equation}
where $U\subset X_P$ is any open set. To extend the operator $\nabla_\xi$ on smooth sections to an operator we
denote $\nabla''_\xi$ on distributional sections we demand commutativity
of the diagram
\[
 \cdiag{
 C^{\infty}(L_\omega\vert_U) \ar@{^{(}->}[r]^\iota \ar[d]_{\nabla_{\xi}} &
 C^{-\infty}(L_\omega\vert_U) \ar[d]^{\nabla''_{\xi}} \\
 C^{\infty}(L_\omega\vert_U) \ar@{^{(}->}[r]^\iota &
 C^{-\infty}(L_\omega\vert_U) \\
} .
\]
To determine $\nabla''_\xi\si$ for a general distributional section $\si$
not of the form $\iota s$, we establish what its transpose is by integrating
the operator $\nabla_\xi$ by parts. This gives, for any smooth section $s\in
C^\infty(L_\omega\vert_U)$ and smooth test section $\phi\in C^\infty_c(L_\omega^{-1}\vert_U)$ ,
\[
 \left( \nabla''_\xi\iota s\right)(\phi) = \int\limits_U \left( \nabla_\xi s \right)
 \phi \frac{\om^n}{n!}  
  =  -\int\limits_U s \left( \mathrm{div}\xi \phi+ \nabla^{-1}_\xi\phi \right)
 \frac{\om^n}{n!} .
\]
Here we use the fact that given a connection $\nabla$ on $L_\omega$, the inverse line bundle $L_\omega^{-1}$ (defined by the inverse cocycle in a trivialization) comes naturally equipped with a connection
(defined by the negative of the connection one-forms); we will denote this connection by $\nabla^{-1}$.
 Therefore, $\nabla''_\xi$ is characterized by its transpose,
\[
 \nabla''_\xi \si (\phi) = \si(\t \nabla_\xi \phi), \quad \forall
 \phi \in C^\infty_c(L_\omega^{-1}\vert_U) ,
\]
where
\[
 \t \nabla_\xi \phi = - \left( \mathrm{div} \xi \phi + \nabla^{-1}_\xi \phi \right) .
\]

\begin{rem} The formulae for the definition of weak covariant constancy would become
more involved if we wrote them using the Hilbert space structure on sections of $L$
given by the Hermitean structure.
For example, we can extend the operator
of covariant differentiation by use of the (restriction of the) adjoint
of $\nabla_\xi$ as operator on the dense subspace $C^\infty_c(L_\omega\vert_U) \subset 
L^2(L_\omega\vert_U)$. Note that
\[
 \langle s, s' \rangle_{L^2} = (\iota s)(\ol{s'} h) , \quad
 \forall s,s' \in C^\infty_c(L_\omega\vert_U),
\]
where $h\in C^\infty\left( (L_\omega\tns\ol{L_\omega})^{-1} \right)$ is the Hermitean
structure on the line bundle $L_\omega$. This gives
\[
 \langle \nabla_\xi s, s' \rangle_{L^2} =  \langle s,\nabla^*_\xi s' \rangle_{L^2}
 \iff
(\iota s) \t \nabla_\xi (\ol{s'} h)
 = (\iota s) \left( \left(\ol{ \nabla_\xi^* s'}\right) h \right) ,
\]
or
\[
 \nabla^*_\xi s = \ol{\t\nabla_\xi\left(\ol{s} h\right)  h^{-1}} .
\]
\end{rem}

\subsection{The space of toric K\"ahler metrics}\label{sect_tangentcone} If we denote the set of toric K\"ahler metrics on $(X_P,\omega)$ by $\mathcal{M}_P$, it is parametrized by the convex set of functions $C_{g_P}^\infty(P)$. The space $\mathcal{M}_P$ carries a Riemannian metric $\gamma_{\mathcal{M}_P}$ introduced by Mabuchi, Semmes and Donaldson (see \cite{SZ} and references therein), whose geodesic segments are linear in terms of  the symplectic potential $\vp$,
\[
 s \mapsto g_P + \vp_0 + s\left(\vp_1-\vp_0 \right) .
\]
{}From this, it is clear that the tangent cone at infinity (introduced by Gromov, and which we think of as ``the space seen from infinity'', cf. \cite{Gr,JM})
\begin{equation}\label{tangent_cone_limit}
 T_\infty\mathcal{M}_P := \lim_{t\to\infty} \left( \mathcal{M}_P, \frac{1}{t}\gamma_{\mathcal{M}_P} \right)
\end{equation}
consists of all functions $\psi \in C_{g_P}^\infty(P)$ with non-negative definite Hessian on the whole interior of $P$, which is the necessary and sufficient condition for the geodesic ray
\begin{equation}\label{geodesicray}
 s \mapsto g_P + \vp + s \psi 
\end{equation}
to be defined for all $s \geq 0$. Denoting this set by $C^\infty_{\textrm{Hess} \geq 0}(P)$, we have therefore a natural identification
\begin{equation}\label{tangent_cone_identification}
 T_\infty \mathcal{M}_P \cong C^\infty_{\textrm{Hess} \geq 0}(P) .
\end{equation}
Actually, for technical reasons we will restrict mostly to the subset
\[
 T_\infty^+ \mathcal{M}_P :\cong C^\infty_{\textrm{Hess} > 0}(P)
\]
of strictly convex directions in $\mathcal{M}_P$, for the following reason: if we consider the family of Riemannian metrics on $X_P$ over $\mathcal{M}_P$,
\[
 \cdiag{ \mathcal{X}_P \cong \mathcal{M}_P \times X_P \ar[d] \\ \mathcal{M}_P }, \textrm{ where } \left(\mathcal{X}_P\right)_\vp := \left( X_P, \gamma_\vp \right) ,
\]
we can ``lift the limit'' (\ref{tangent_cone_limit}) to the geodesic families.
\begin{figure}
 \begin{centering}
 \includegraphics[width=240pt]{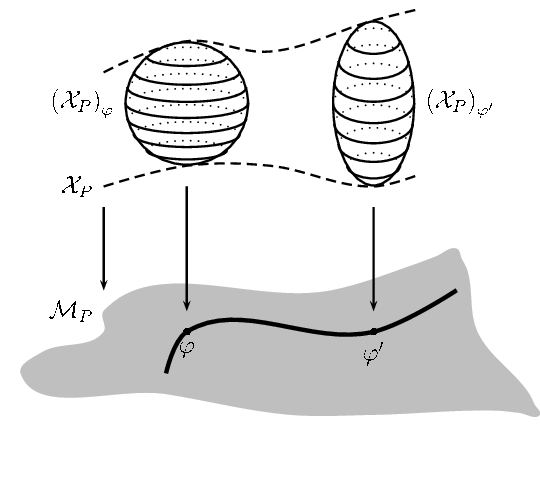}
 \end{centering}
 \caption{The family of toric K\"ahler metrics, schematically.}
 \label{fig16}
\end{figure}
Indeed, substituting the potential $g_P + \vp + s \psi$ in (\ref{cs}) and restricting to the diagonal $s=t$, we see that
\begin{equation}
 \frac{1}{s} \gamma_{\vp+s\psi} = \left(\begin{array}{cc}
 \frac{1}{s} {\rm Hess}_{x}\left( g_P+\vp \right) + {\rm Hess}_{x} \psi & 0\\
0 & \frac{1}{s} \left( {\rm Hess}_{x}\left( g_P+\vp \right) + s{\rm Hess}_{x} \psi \right)^{-1}\end{array}\right),
\end{equation}
that is,
\[
 \lim_{s\to\infty} \left( X_P, \frac{1}{s}\gamma_{\vp+s\psi} \right) = \left( P, {\rm Hess}_{x} \psi  \right) .
\]
It is in this sense that the family of toric K\"ahler metrics on $X_P$, when seen from infinity, collapses to a family of Hessian metrics on $P$ over $T_\infty\mathcal{M}_P$ that we denote (with a slight abuse of notation) by $T_\infty \mathcal{X}_P$,
\[
 \cdiag{ T_\infty \mathcal{X}_P \cong P \times T_\infty \mathcal{M}_P \ar[d] \\ T_\infty \mathcal{M}_P }, \textrm{ where } \left(T_\infty \mathcal{X}_P\right)_\vp := \left( P, {\rm Hess}_{x} \psi \right) .
\]
It is natural to turn the attention primarily to the limits which have a non-degenerate metric, that is, to $T_\infty^+ \mathcal{M}_P$.

In Section \ref{cpamoeba} we will study the geometry (and how much of it can be recovered from the limit) of generic divisors in $X_P$ along these lines.
\begin{figure}
 \begin{centering}
 \includegraphics[width=240pt]{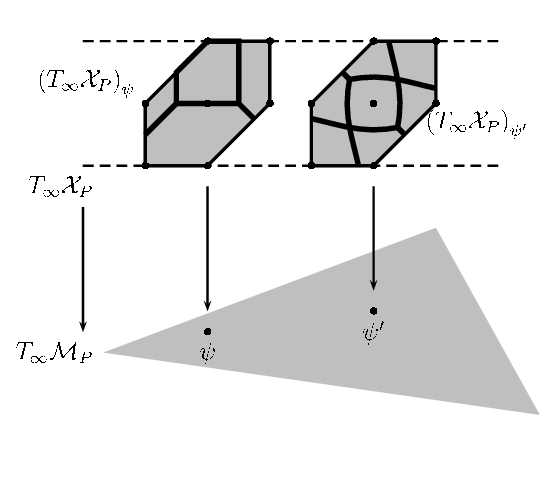}
 \end{centering}
 \caption{The family of Hessian metrics on a polytope with limit amoebas, schematically.}
 \label{fig15}
\end{figure}

\section{Quantization}

In this Section, we study the geometric quantization of toric symplectic manifolds for the family $\mathcal{M}_P$ of toric K\"ahler metrics, and their degeneration.

\subsection{Quantization in a real polarization}
\label{realpol}

We begin by describing the quantization obtained using the
definition outlined above for the singular real polarization $\P_\R$
defined by the moment map
\[
 \P_\R = \ker \dd \mu_P .
\]
This means that we consider the space of weakly covariant constant
distributional sections, $\Q_\R \subset (C^{\infty}(L_\om^{-1}))'$,
\[
 \Q_\R := \left\{\sigma\in (C^{\infty}(L_\om^{-1}))' |
 \forall\, W\subset X_P \textrm{ open, } \forall\, \xi\in C^{\infty}(\P_\R\vert_W),
 \, \nabla_{\xi}''(\sigma\vert_{W})=0 \right\} .
\]
In the present Section, ``covariantly constant'' is always understood
to mean ``covariantly constant with respect to the polarization $\P_\R$''.
We first give a result describing the local covariantly
constant sections. Note that it only depends on the local structure of the
polarization, and thus applies also in cases where globally one is not dealing
with toric varieties.

\begin{prop} \label{del} \begin{itemize}
 \item[(i)] Any covariantly constant section on a $\T^n$-invariant open set
$W\subset X_P$ is supported on the Bohr-Sommerfeld fibers in $W$.
\item[(ii)] The distribution
\[
 \de^{m}(\tau)=
 \int\limits _{\mu_P^{-1}(m)}\e^{\ii \t\ell(m) \vartheta}\tau_{v}, 
 \quad \forall\tau\in C^{\infty}_c(L_\om^{-1}\vert_W),
\]
is covariantly constant.
\item[(iii)] The sections $\de^m, \ m\in\mu_P(W)\cap\Z^n$ span the space
of covariantly constant sections on $W$.
\end{itemize}
\end{prop}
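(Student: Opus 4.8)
The plan is to reduce everything to an explicit distributional computation on the open orbit $\breve X_P\cong\iP\times\T^n$, where the polarization $\P_\R=\ker\dd\mu_P$ is trivialized by the torus fields $\partial/\partial\theta_1,\dots,\partial/\partial\theta_n$, and then to control the boundary strata using the vertex charts $V_v$. The first observation is that, because the prequantum connection is tensorial in the vector field and $\nabla''$ is defined by testing against \emph{all} compactly supported smooth sections, the condition $\nabla_\xi''(\eta|_W)=0$ for every $\xi\in C^\infty(\P_\R|_W)$ is equivalent to the $n$ equations $\nabla''_{\partial/\partial\theta_j}(\eta|_W)=0$. Indeed, writing $\xi=\sum_j f_j\,\partial/\partial\theta_j$ with $f_j\in C^\infty(W)$ and using $\nabla_{f_j\partial/\partial\theta_j}=M_{f_j}\nabla_{\partial/\partial\theta_j}$, the functional-analytic relation $(M_{f_j}\nabla_{\partial/\partial\theta_j})^*=\nabla^*_{\partial/\partial\theta_j}M_{\bar f_j}$ shows that $\overline{\nabla^*_\xi}\tau$ is built from terms $\overline{\nabla^*_{\partial/\partial\theta_j}}\tau'$ for test sections $\tau'$, each of which $\eta$ already annihilates. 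By (\ref{conn2}) and the integration-by-parts formula (\ref{nablastar}), each of these equations reads, in the unitary trivialization $\u1_v$ and affine coordinates $(l,\vartheta)$, as the weak equation $(\partial_{\vartheta_j}-\ii\,\ell_j(x))E=0$ on the distributional coefficient $E$ of $\eta$.

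Next I would Fourier-decompose in the angle variables. Since the coefficients $\ell_j(x)$ depend only on $x$ and the $\partial_{\vartheta_j}$ are torus-invariant, the system is diagonal on Fourier modes: writing $E=\sum_{k\in\Z^n}E_k(l)\,\e^{\ii\t k\vartheta}$ with $E_k$ distributions in $l$, uniqueness of Fourier coefficients on the compact torus turns the $j$-th equation into $(\ell_j(x)-k_j)E_k=0$ for every $j$ and every $k$. Hence each $E_k$ is supported on the single point $\{l=k\}$, i.e.\ the fiber over the unique $x$ with $\ell(x)=k$, which meets $\iP$ only when it is an integer (Bohr-Sommerfeld) point. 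A short distributional lemma then upgrades this to $E_k=c_k\,\delta_{\{l=k\}}$ with \emph{no} derivatives of the delta: in one variable $(t-k_j)\sum_i a_i\delta^{(i)}=-\sum_i i\,a_i\delta^{(i-1)}$, so annihilation by multiplication forces every higher coefficient to vanish. This simultaneously proves (i) on the open orbit and, upon reassembly, shows $\eta|_{\breve X_P}=\sum_m c_m\,\delta^m$, which is the spanning statement (iii) there.

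For (ii) I would simply verify that the explicit $\delta^m$ is annihilated by each $\nabla''_{\partial/\partial\theta_j}$. By (\ref{nablastar}) this reduces to evaluating $-\int_{\mu_P^{-1}(m)}\e^{\ii\t\ell(m)\vartheta}(\partial_{\vartheta_j}+\ii\,\ell_j(x))\tau_v$, and since $\ell_j(x)\equiv\ell_j(m)\in\Z$ on the fiber, integrating the $\partial_{\vartheta_j}$ term by parts against the single-valued phase $\e^{\ii\t\ell(m)\vartheta}$ produces exactly the cancelling $-\ii\,\ell_j(m)$ contribution, with no boundary term on the compact torus. The integrality $\ell(m)\in\Z^n$ is precisely what makes the phase well defined, so this is where the Bohr-Sommerfeld condition re-enters.

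The hard part will be passing from the open orbit to all of $X_P$, and in particular to $\T^n$-invariant open sets $W$ meeting the boundary, where the Lagrangian fibration is singular and some angle circles collapse. A priori a covariantly constant distribution could be supported on a lower-dimensional toric subvariety, so one cannot conclude merely from $\breve X_P$ being dense. My plan is to repeat the mode analysis in the vertex charts (\ref{tf}): there the connection and the polarization extend smoothly, and the same multiplication-by-affine-function argument pins the surviving modes to integer points, now including boundary lattice points $m\in\partial P\cap\Z^n$ whose fibers are lower-dimensional tori (or the fixed point at a vertex). Showing that these boundary $\delta^m$ are well defined and covariantly constant, and that no distribution supported purely on the singular locus away from the $\mu_P^{-1}(m)$ can satisfy the weak equations, is the main technical obstacle; it is exactly where the degeneration of the torus orbits and the integrality of the connection holonomy must be handled together, and where the local, sheaf-theoretic formulation of covariant constancy is essential.
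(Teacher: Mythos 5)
Your open-orbit analysis is sound, and it is essentially the paper's own argument in the nondegenerate case: reduction to the frame fields $\partial/\partial\theta_j$ (legitimate, since test sections form a $C^\infty$-module), fiberwise Fourier decomposition, the mode equations $(l_j-k_j)E_k=0$, and the one-variable division lemma excluding derivatives of $\delta$; your integration-by-parts verification of (ii) for interior fibers is also fine. The genuine gap is exactly the part you defer as ``the main technical obstacle'': the singular Bohr--Sommerfeld fibers over $\partial P\cap\Z^n$. This is not a routine repetition of the interior argument, and it is the actual content of the proposition --- it is precisely where this result differs from \cite{Ham2}, which detects only the nondegenerate fibers. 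Your proposed plan (``repeat the mode analysis in the vertex charts \dots the same multiplication-by-affine-function argument'') breaks down there for two reasons. First, in a chart around a point of a codimension-$k$ stratum the $k$ collapsed angle variables are not functions on the chart, so there is no Fourier decomposition in those directions; one can only decompose in the surviving $\T^{n-k}$ angles, with coefficients that are smooth compactly supported functions of the remaining variables. Second, the smooth sections of $\P_\R$ in the collapsed directions are generated by rotation fields $u_j\,\partial/\partial v_j-v_j\,\partial/\partial u_j$, which vanish identically on the singular stratum; the weak equations they impose are therefore not of the form ``multiplication by an affine function annihilates the distribution'', and your division lemma does not apply to them.

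For comparison, the paper closes this gap as follows: it passes to adapted coordinates $u_j=\sqrt{x_j}\cos\theta_j$, $v_j=\sqrt{x_j}\sin\theta_j$ in which $\omega$ and $\nabla$ are smooth and standard across the stratum; it proves (i) by a holonomy argument (covariant constancy forces $\sigma=\e^{2\pi\ii\,\t a\,\mu_P}\sigma$ for all $a\in\Z^n$, pinning the support to fibers with integral $\mu_P$, hence to $u=v=0$ in the collapsed directions since $u_j^2+v_j^2<\varepsilon<1$ there); it decomposes test sections only in the nondegenerate angles; and for (iii) it invokes the structure theorem for distributions supported at a point (finite sums of derivatives of $\delta$, cf.\ \cite{Tr}) and kills every derivative term and every wrong Fourier mode by explicit testing against polynomial test functions and the vector field with $\alpha_j=\beta_j\equiv 1$ --- here it is the quadratic connection term $\tfrac{\ii}{2}(u_j^2+v_j^2)$ together with the vanishing rotation fields, not an affine multiplication, that does the work. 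The same computation is what makes your part (ii) true at boundary fibers: the rotation-field contributions to $\ol{\nabla}^*_\xi\tau$ vanish when evaluated at $u=v=0$. Until an argument of this kind is carried out, parts (i)--(iii) remain unproved on any $W$ meeting $\partial P$, and a covariantly constant distribution supported on a boundary stratum away from lattice fibers is not excluded.
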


\begin{proof} Note that all the statements are local in nature. Using the action
of $Sl(n,\Z)$ on polytopes, we can reduce without loss of generality an arbitrary
Bohr-Sommerfeld fiber $\mu_P^{-1}(m)$ with $m\in P\cap\Z^n$ being contained in a
codimension $k$ (and no codimension $k+1$) face of $P$ to the form
\[
 m = (\underbrace{0,\dots,0}_k,\widetilde{m}) \quad \textrm{ with } \widetilde{m}_j > 0 \quad
 \forall j=k+1,\dots,n .
\]
Furthermore, we can cover all of $X_P$ by charts $\widetilde{W}$ of special
neighborhoods of $\mu_P^{-1}(m)$ of the form
\[
 \widetilde{W} := B_\varepsilon(0) \times
 \left(\widetilde{m}+]-\varepsilon,\varepsilon[^{n-k}\right)
 \times \T^{n-k}
\]
with $0 < \varepsilon < 1$ and $B_\varepsilon(0)\subset\R^{2k}$ being the ball
of radius $\varepsilon$ in $\R^{2k}$. This chart is glued onto the open orbit via the map,
\begin{eqnarray*}
& \breve{P}\times\T^n \ni (x_1,\dots,x_n,\th_1,\dots,\th_n) \mapsto \\
& \left( u_1 = \sqrt{x_1}\cos\th_1, v_1 = \sqrt{x_1}\sin\th_1,\dots, u_k = \sqrt{x_k}\cos\th_k,
 v_k = \sqrt{x_k}\sin\th_k, \right. \\
& \left. x_{k+1},\dots,x_n,\th_{k+1},\dots,\th_n \right) \in \widetilde{W}
\end{eqnarray*}
and therefore
induces the standard symplectic form in the coordinates $u,v,x,\th$
\[
 \om = \sum_{j=1}^k \dd u_j\land \dd v_j + \sum_{j=k+1}^n \dd x_j \land \dd \th_ j .
\]
We then trivialize the line bundle with connection $L_\om$
over $\widetilde{W}$ by setting
\[
 \nabla = \dd - \ii\left(\frac{1}{2}(\t u\dd v-\t v\dd u)+\t x\dd \th\right).
\]
Therefore, we are reduced to studying the equations of covariant constancy
on the space of (usual) distributions $C^{-\infty}(\widetilde{W})$ on
$\widetilde{W}$, using arbitrary test functions in $C^\infty_c(\widetilde{W})$
and vector fields $\xi\in C^\infty(\P_\R\vert_{\widetilde{W}})$. These can be
written as
\begin{equation} \label{xivf}
 \xi_{(u,v,x,\th)} = \sum_{j=1}^k \al_j
 \left(u_j\pd{}{v_j}-v_j\pd{}{u_j}\right) +
 \sum_{j=k+1}^n \beta_j\pd{}{\th_j} ,
\end{equation}
where $\al_j,\beta_j$ are smooth functions on $\widetilde{W}$.

 (i) First, note that if a distribution $\si$ is covariantly constant, it
is unchanged by parallel transport around a loop in $\T^n$, while on the other
hand this parallel transport results in multiplication of all test sections by
the holonomies of the respective leaves. Explicitly, for a loop specified by a
vector $a\in\Z^n$, any test function $\tau\in C^\infty_c(\widetilde{W})$ is
multiplied by the smooth function $\e^{2\pii \t a\, \mu_P(u,v,x,\th) }$. Therefore,
\[
 \si = \e^{2\pii \t a \, \mu_P } \si, \quad \forall a \in \Z^n,
\]
and $\si$ must be supported in the set where all the exponentials equal $1$,
i.e. on the Bohr-Sommerfeld fibers, where $\mu_P$ takes integer values.

(ii) In the chart on $\widetilde{W}$, any test function $\tau$ can be written
as a Fourier series
\begin{equation}\label{fwf}
 \tau(u,v,x,\th) = \sum_{b\in\Z^{n-k}} \widehat{\tau}(u,v,x,b) \e^{\ii \t b\, \th} ,
\end{equation}
with coefficients that are smooth and compactly supported in the other variables,
\[
 \forall b\in\Z^{n-k}: \quad
 \widehat{\tau}(.,b) \in C^\infty_c\left(
 B_\varepsilon(0)\times\left(\widetilde{m}+]-\varepsilon,\varepsilon[^{n-k}\right)
 \right) .
\]
The local representative of the distributional section $\de^m$ is calculated as
\begin{eqnarray*}
 \de^m(\tau) & = &  \int\limits _{\mu_P^{-1}(m)}\e^{\ii \t\widetilde{m} \th}\tau \ = \\
 & = & \int\limits_{\T^{n-k}} \e^{\ii \t\widetilde{m} \th} \tau(u=0,v=0,x=\widetilde{m},\th)
 \dd \th \ = \\
 & = & \widehat{\tau}(u=0,v=0,x=\widetilde{m},b=-\widetilde{m}) .
\end{eqnarray*}

Differentiating an arbitrary test function $\tau$ along any vector field
\[
 \xi_{(u,v,x,\th)} = \sum_{j=1}^k \al_j
 \left(u_j\pd{}{v_j}-v_j\pd{}{u_j}\right) +
 \sum_{j=k+1}^n \beta_j\pd{}{\th_j} ,
\]
with constant coefficients $\al_j,\beta_j$ in the polarization $\P$
(which is generated by such vector fields over $C^\infty(W)$) gives
\begin{eqnarray}
 \t\nabla_\xi \tau & = & \dd\tau \xi +
 \ii\left(\frac{1}{2}(\t u\dd v-\t v\dd u)+\t x\dd \th\right)\xi \tau \ = \nn\\
 & = & \sum_{b\in\Z^{n-k}} \left(
 \sum_{j=1}^k \al_j \left(u_j\pd{\widehat{\tau}}{v_j}-v_j\pd{\widehat{\tau}}{u_j}
 +\frac{\ii}{2}(u_j^2+v_j^2)\widehat{\tau}\right) + \right. \nn\\
 & & \quad \left. +\ii \sum_{j=k+1}^n \beta_j (x_j+b_j)\widehat{\tau} \right)
 \e^{\ii \t b\, \th} , \label{dertst}
\end{eqnarray}
whence for arbitrary $\tau$ and any such $\xi$
\begin{eqnarray*}
 \de^m( \t \nabla_\xi \tau ) & = &
  \left( \sum_{j=1}^k \al_j \left(u_j\pd{\widehat{\tau}}{v_j}-v_j\pd{\widehat{\tau}}{u_j}
 +\frac{\ii}{2}(u_j^2+v_j^2)\widehat{\tau}\right) + \right. \\
 & & \quad \left. + \ii \sum_{j=k+1}^n \beta_j (x_j+b_j)\widehat{\tau}
 \right)_{(u=0,v=0,x=\widetilde{m},b=-\widetilde{m})} \ = \ 0 ,
\end{eqnarray*}
so that $\de^m$ is covariantly constant.

(iii) Consider an arbitrary distribution $\si\in C^{-\infty}(\widetilde{W})$.
Using Fourier expansion of test functions along the non-degenerating directions of
the polarization on $\widetilde{W}$, as in item (ii), to $\si$ we associate a family
of distributions $\widehat{\si}_b$ on $B_\varepsilon(0)\times
\left(\widetilde{m}+]-\varepsilon,\varepsilon[^{n-k}\right)$ by setting
\[
 \widehat{\si}_b(\psi) := \si\left(\psi(u,v,x)\e^{\ii \t b \th}\right) , \quad
\forall \psi \in  C^\infty_c\left(
 B_\varepsilon(0)\times\left(\widetilde{m}+]-\varepsilon,\varepsilon[^{n-k}\right)
 \right) ,
\]
so that
\[
 \si(\tau) = \sum_{b\in\Z^{n-k}} \widehat{\si}_b(\widehat{\tau}(.,b)) .
\]
It is clear that the map $\si\mapsto(\widehat{\si}_b)_{b\in\Z^{n-k}}$ is injective,
so we need to show that the condition of covariant constancy
\[
 \si( \t\nabla_\xi \tau ) = 0, \quad \forall \tau\in C^\infty_c(\widetilde{W}),
 \xi\in C^\infty(\P_\R\vert_{\widetilde{W}})
\]
implies that
\[
 \widehat{\si}_b = \left\{ \begin{array}{ll} 0 & \textrm{ if } b\neq -\widetilde{m}, \\
 \la \de(u,v,x-\widetilde{m}) & \textrm{ if } b = -\widetilde{m}\textrm{, where }
 \la\in\C. \end{array} \right.
\]
{}From (i) we know that $\widehat{\si}_b$ has support at the point
$(u,v,x)=(0,0,\widetilde{m})$ for each $b$, therefore it must be of the form
(see, for instance, \cite{Tr} Chapter 24; here $j,k,l$ are multi-indices)
\[
 \widehat{\si}_b = \sum_{\mathrm{finite}} \ga_{jkl}^b
\pd{{}^{|j|+|k|+|l|}}{u^j\partial v^k \partial x^l} \de(u,v,x-\widetilde{m}) .
\]
Using for example a vector field $\xi$ with $\al_j=\beta_j\equiv 1$ and
explicit test functions $\tau$ that are polynomial near the point
$(u,v,x)=(0,0,\widetilde{m})$, it is easy to give examples that show
that if $\ga_{jkl}^b\neq 0$ and either $b\neq-\widetilde{m}$ or $|j|+|k|+|l|>0$, then
there exist $\xi$ and $\tau$ such that
\[
 \si( \t\nabla_\xi \tau ) \neq 0,
\]
thus producing a contradiction.
\end{proof}

Immediately from this proposition follows the

\begin{proofb}{\ref{th1b}} Since the open neighborhoods considered in the
Proposition are $\T^n$-invariant and the covariantly constant sections are
supported on closed subsets, each of them can evidently be extended by $0$
to give a global covariantly constant section.
\end{proofb}

\subsection{The degenerate limit of K\"ahler polarizations}
\label{largepol}

As mentioned above, we will turn our attention to a 
family of compatible complex structures
determined by the symplectic potentials $g_{s}=g_{P}+\varphi+s\psi$, being interested in the limit
of holomorphic polarizations, 
in the sense of geometric quantization, 
and subsequently in the convergence of monomial sections
to the delta distributions just described.

In the vertex coordinate charts $V_{v}$ described above, 
the holomorphic polarizations
are given by
\[
 \P_\C^{s}=\spanC\{\frac{\partial}{\partial w_{j}^{s}}:j=1,\dots,n\}.
\]
Let $\P_{\R}$ stand for the vertical polarization, that is
\[
 \P_{\R}:=\ker\dd\mu_P,
\]
 which is real and singular above the boundary $\partial P$. Let now 
$\P^{\infty} := \lim_{s\to\infty}\P_\C^{s}$ where the limit is taken in the positive 
Lagrangian Grassmannian of the complexified tangent space at each point in $X_P$.

\begin{lem}
\label{openo}
On the open orbit $\breve X_P$, $\P^{\infty}=\P_{\R}.$
\end{lem}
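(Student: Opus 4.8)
The plan is to reduce the statement to an explicit computation on the open orbit, where everything is controlled by the single matrix $G_s=\Hess_x g_s$. In the action-angle coordinates $(x,\theta)\in\iP\times\T^n$ the complex coordinates are $w_j=\e^{y_j+\ii\theta_j}$ with $y=\pd{g_s}{x}$, so that $\P_\C^s=\spanC\{\pd{}{w_j^s}\}$. First I would rewrite these holomorphic vector fields in the real frame $\{\pd{}{x_k},\pd{}{\theta_k}\}$. Since $\pd{}{w_j^s}$ is proportional to $\pd{}{\zeta_j}$ for $\zeta_j=\log w_j=y_j+\ii\theta_j$, and since $y=\pd{g_s}{x}$ gives $\pd{y}{x}=G_s$ (hence $\pd{x}{y}=G_s^{-1}$), a direct application of the chain rule yields
\[
 \frac{\partial}{\partial\zeta_j}=\frac{1}{2}\sum_k (G_s^{-1})_{kj}\,\pd{}{x_k}-\frac{\ii}{2}\,\pd{}{\theta_j}.
\]
Rescaling by $2\ii$, this shows
\[
 \P_\C^s=\spanC\Big\{ \pd{}{\theta_j}+\ii\sum_k (G_s^{-1})_{kj}\,\pd{}{x_k} \ :\ j=1,\dots,n \Big\}.
\]

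Second, I would read this as a single chart on the Lagrangian Grassmannian: with respect to the splitting $\P_\R=\spanC\{\pd{}{\theta_j}\}$ and the complementary Lagrangian $\spanC\{\pd{}{x_j}\}$, the subspace $\P_\C^s$ is exactly the graph of the symmetric linear map with matrix $\ii G_s^{-1}$. In this chart $\P_\R$ itself is the origin (the zero map), and the positive Lagrangian Grassmannian is the Siegel-type domain of graphs of symmetric $B$ with $\im B>0$; here $\im(\ii G_s^{-1})=G_s^{-1}$, which is positive definite because $G_s$ is, confirming that $\P_\C^s$ lies in the positive part for every $s$ and that $\P_\R$ sits on its boundary.

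Third, the convergence is then immediate. Writing $G_s=\Hess_x(g_P+\varphi)+s\,\Hess_x\psi$ and using $\psi\in C_+^\infty(P)$, so that $\Hess_x\psi$ is positive definite on all of $\iP$, the eigenvalues of $G_s(x)$ grow at least linearly in $s$ at each fixed $x\in\iP$; hence $G_s(x)^{-1}\to 0$, i.e. the graph matrix $\ii G_s^{-1}\to 0$, so $\P_\C^s\to\P_\R$ in the Grassmannian chart at every point of $\breve X_P$.

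The computation of $\pd{}{w_j^s}$ in action-angle coordinates and the identification of the graph chart are the only real content; positivity and the limit follow formally. The one point to treat with care is the book-keeping of the Lagrangian Grassmannian chart — checking that $B=\ii G_s^{-1}$ is genuinely symmetric (immediate from $G_s={}^tG_s$) so that its graph is Lagrangian, and that $\P_\R$ is the correct boundary value $\im B=0$ of the domain of positive Lagrangians. Note that the argument is purely local on the open orbit and uses neither compactness nor the Delzant conditions at $\partial P$; the singular behaviour over the boundary is precisely what the vertex charts in the following lemmas must address.
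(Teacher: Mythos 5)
Your proof is correct and follows essentially the same route as the paper's: both express the holomorphic frame on the open orbit through $G_s^{-1}$ and conclude from $G_s > s\,\Hess\psi > 0$ that $G_s^{-1}\to 0$ pointwise on $\iP$, hence $\P_\C^s\to\P_\R$. Your explicit identification of $\P_\C^s$ as the graph of $\ii G_s^{-1}$ in a Siegel-domain chart of the positive Lagrangian Grassmannian is a harmless elaboration of what the paper leaves implicit.
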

\begin{proof}
By direct calculation,
\[
 \frac{\partial}{\partial y_{l_{j}}^{s}}=\sum_{k}(G_{s}^{-1})_{jk}\frac{\partial}{\partial l_{k}}
\]
 where
\[
 (G_s)_{jk}=\Hess g_{s}
\]
 is the Hessian of $g_{s}$. Since $G_{s} > s \Hess \psi >0$,
$(G_{s}^{-1})_{jk}\to 0$ and
\[
 \spanC\{\frac{\partial}{\partial w_{j}^{s}}\} = \spanC \{
 \frac{\partial}{\partial y_{l_{j}}^{s}}-\ii\frac{\partial}{\partial\vartheta_{j}}
 \} \to \spanC\{\frac{\partial}{\partial\vartheta_{j}}\}.
\]
\end{proof}
At the points of $X_P$ that do not lie in the open orbit, 
the holomorphic polarization ``in the degenerate angular directions'' is 
independent of $g_s$, in the following sense:

\begin{lem}
Consider two charts around a fixed point $v\in P$, $w_v,\widetilde{w}_v:V_{v}\to\C^{n}$,
specified by symplectic potentials $g\neq\widetilde{g}$.

Whenever $w_{j}=0$, also $\widetilde{w}_{j}=0$, and
at these points \[
\C\cdot\frac{\partial}{\partial w_j}=\C\cdot
\frac{\partial}{\partial\widetilde{w}_{j}},\quad j=1,\dots,n.\]
 
\end{lem}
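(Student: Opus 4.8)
The plan is to argue by direct calculation in the spirit of Lemma~\ref{openo}, working on the open orbit $\breve X_P$ in action--angle coordinates adapted to $v$ and then pushing everything down to the genuinely smooth symplectic coordinates across the toric divisors, which do \emph{not} depend on the symplectic potential. Using the affine change $l=Ax-\lambda$ of the preliminaries I may assume $\ell_1=\dots=\ell_n$ become $l_1,\dots,l_n$ and that $v$ sits at $l=0$. Near the facet $\{l_j=0\}$ I introduce, exactly as in the proof of Proposition~\ref{del}, the smooth coordinates $u_j=\sqrt{l_j}\cos\vartheta_j$, $v_j=\sqrt{l_j}\sin\vartheta_j$; since these are built from $\omega$ and $\mu_P$ alone, they are the \emph{same} functions for $g$ and $\widetilde g$. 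The entire $g$-dependence of the chart (\ref{tf}) is then isolated in two universal pieces of Guillemin boundary data, and the Lemma becomes a single limit computation.

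The two inputs are the following. First, Guillemin's boundary behaviour gives $\pd{g}{l_j}=\tfrac12\log l_j+R_j(l)$ with $R_j$ smooth up to $\{l_j=0\}$, the logarithmic singularity coming only from the term $\tfrac12 l_j\log l_j$ of $g_P$ (cross derivatives of this term vanish), hence identical for $g$ and $\widetilde g$, whose potentials differ by the globally smooth $\vp-\widetilde\vp$. Because $w_j=\e^{\,\partial g/\partial l_j+\ii\vartheta_j}=(u_j+\ii v_j)\,\e^{R_j}$ with $\e^{R_j}>0$ smooth, part~(1) is immediate: $\{w_j=0\}=\{u_j=v_j=0\}=\{\widetilde w_j=0\}$. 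Second, writing $G=\Hess_l g$, the only singular entry near $v$ is $G_{jj}=\tfrac{1}{2l_j}+O(1)$, so $G=D+B$ with $D=\mathrm{diag}(\tfrac{1}{2l_1},\dots,\tfrac{1}{2l_n})$ and $B$ bounded; the Neumann expansion $G^{-1}=D^{-1}-D^{-1}BD^{-1}+\dots$ then yields $(G^{-1})_{jj}\sim 2l_j$ and $(G^{-1})_{jk}=O(l_jl_k)$ for $k\neq j$.

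For part~(2) I use $\pd{}{y_{l_j}}=\sum_k(G^{-1})_{jk}\pd{}{l_k}$ from Lemma~\ref{openo} together with $w_j=\e^{y_{l_j}+\ii\vartheta_j}$ and the Wirtinger relation, which give on the open orbit
\[
 \frac{\partial}{\partial w_j}=\frac{1}{2w_j}\Big(\sum_k(G^{-1})_{jk}\frac{\partial}{\partial l_k}-\ii\frac{\partial}{\partial\vartheta_j}\Big).
\]
Rewriting $\partial_{l_k}=\tfrac{1}{2\sqrt{l_k}}(\cos\vartheta_k\,\partial_{u_k}+\sin\vartheta_k\,\partial_{v_k})$ (which blows up like $l_k^{-1/2}$) and $\partial_{\vartheta_j}=\sqrt{l_j}(-\sin\vartheta_j\,\partial_{u_j}+\cos\vartheta_j\,\partial_{v_j})$ (which vanishes like $l_j^{1/2}$), and inserting the Hessian asymptotics, every off-diagonal term is $o(1)$ while the diagonal and $\vartheta_j$ terms survive; using $\cos\vartheta_j+\ii\sin\vartheta_j=\e^{\ii\vartheta_j}$ the $\vartheta_j$-dependence cancels and one reaches
\[
 \frac{\partial}{\partial w_j}\Big|_{l_j=0}=\tfrac12\,\e^{-R_j}\big|_{l_j=0}\,\Big(\frac{\partial}{\partial u_j}-\ii\frac{\partial}{\partial v_j}\Big).
\]
The scalar is a nonzero positive number, and $u_j,v_j$ are potential-independent; the identical formula therefore holds for $\widetilde w_j$, so $\C\cdot\partial_{w_j}=\C\cdot(\partial_{u_j}-\ii\partial_{v_j})=\C\cdot\partial_{\widetilde w_j}$ at every point of $\{w_j=0\}$.

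The main obstacle is the case of simultaneous degeneration at or near the fixed point, where several $l_k\to0$ at once: then $\partial_{l_k}$ blows up for every degenerate $k$, and one must verify that each off-diagonal contribution $\tfrac{1}{2\sqrt{l_j}}(G^{-1})_{jk}\partial_{l_k}=O(\sqrt{l_jl_k})$ genuinely vanishes and that $(G^{-1})_{jj}$ has the precise leading coefficient $2l_j$. This is exactly where the diagonal structure of the singular part ($G=D+B$ with $B$ bounded) is essential: it makes the Neumann expansion converge uniformly near $v$ and supplies all the required rates, so the limit above is attained independently of the direction of approach to $\{w_j=0\}$.
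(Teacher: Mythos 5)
Your part (1) and your closing formula are correct, and in fact your first step already contains the whole proof: from $w_j=(u_j+\ii v_j)\,\e^{R_j}$ and $\widetilde{w}_j=(u_j+\ii v_j)\,\e^{\widetilde{R}_j}$ one gets $w_j=\widetilde{w}_j\,f$ with $f=\e^{R_j-\widetilde{R}_j}$ real, positive, smooth, and factoring through $\mu_P$. This is exactly the relation the paper starts from; it then simply differentiates, $\dd w_j=f\,\dd\widetilde{w}_j+\widetilde{w}_j\,\dd f$, evaluates at $\widetilde{w}_j=0$ (where the second term dies), and reads off $\C\cdot\partial/\partial w_j=\C\cdot\partial/\partial\widetilde{w}_j$. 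No information about the Hessian $G$ or its inverse is needed anywhere. Your detour through the open orbit, Lemma \ref{openo} and the asymptotics of $G^{-1}$ is therefore not only much heavier than necessary, it is also where the one genuine gap in your argument sits.

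The gap is this: your rates $(G^{-1})_{jj}\sim 2l_j$ and $(G^{-1})_{jk}=O(l_jl_k)$ are derived from the Neumann series $G^{-1}=D^{-1}-D^{-1}BD^{-1}+\cdots$, which converges only where $\|D^{-1}B\|<1$; since $D^{-1}=\mathrm{diag}(2l_1,\dots,2l_n)$, this requires \emph{all} the $l_k$ to be small, i.e. it holds only in a small neighborhood of the fixed point. But the lemma concerns every point of $\{w_j=0\}$ in the chart $V_v$, and this locus contains points (say, in the middle of the facet $\{l_j=0\}$) where the non-degenerate coordinates $l_k$ are of order one; there $\|D^{-1}B\|$ need not be less than one and the series can diverge, so your estimates are not established precisely in the regime you treated as unproblematic --- your ``main obstacle'' is inverted: simultaneous degeneration near $v$ is where Neumann works, partial degeneration far from $v$ is where it fails. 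The conclusion is still true there, but proving it needs extra input: for instance the exact resolvent identity $G^{-1}=D^{-1}-D^{-1}B\,G^{-1}$, iterated twice, together with an a priori bound on $G^{-1}$ up to $\partial P$, which is not free --- it relies on Abreu's regularity condition (\ref{detHess}), equivalently on the positive definiteness of the Hessian of $g$ restricted to the faces, the same fact invoked in the proof of Theorem \ref{oanterior}. As written, then, this step fails away from the vertex; the cleanest repair is to discard the Hessian computation altogether and obtain part (2) by differentiating your own identity $w_j=(u_j+\ii v_j)\,\e^{R_j}$, which is the paper's proof with $f=\e^{R_j-\widetilde{R}_j}$.
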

\begin{proof}
According to the description of the charts, \[
w_{j}=\widetilde{w}_j f,\]
 where $f$ is a real-valued function, smooth in $P$, that factorizes through
$\mu_P$, that is through $|\widetilde{w}_1|,\dots,|\widetilde{w}_{n}|$.
Therefore, 
\[
\dd w_j=\sum_{k}\big[(\de_{j,k}f+\widetilde{w}_j\frac{\partial f}{\partial\widetilde{w}_k})\dd\widetilde{w}_{k}
+\widetilde{w}_{j}\frac{\partial f}{\partial\overline{\widetilde{w}}_{k}}\dd\overline{\widetilde{w}}_{k}\big]\]
 and at point with $\widetilde{w}_{j}=0$ one finds, in fact, \[
\C\cdot\frac{\partial}{\partial w_{j}}=\C\cdot\frac{\partial}{\partial\widetilde{w}_{j}}.\]
 
\end{proof}
The two lemmata together give

\begin{thm} \label{oanterior}
In any of the charts $w_v$, the limit polarization $\P^{\infty}$ is
\[
\P^{\infty}=\P_{\R}\oplus\spanC\{\frac{\partial}{\partial w_j}:w_{j}=0\}\]
 \end{thm}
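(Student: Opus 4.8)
The plan is to combine the two lemmata by localizing at a point $p\in V_v$ and splitting the coordinate directions according to which of the $w_j$ vanish there. Fix $p$ in the interior of a codimension-$k$ face $F$ of $P$; after reordering the facets at $v$ we may assume $w_1(p)=\dots=w_k(p)=0$ while $w_{k+1}(p),\dots,w_n(p)\neq 0$. Accordingly I would write the holomorphic polarization as a sum $\P_\C^s=N^s\oplus T^s$ of its \emph{normal} part $N^s=\spanC\{\partial/\partial w_j^s:j\leq k\}$ and its \emph{tangent} part $T^s=\spanC\{\partial/\partial w_j^s:j>k\}$, of complex dimensions $k$ and $n-k$.

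For the normal part I would invoke the second lemma above. Comparing the potential $g_s$ with the fixed potential defining the reference chart $w_v$, and using that $w_j(p)=0$ for $j\leq k$, that lemma shows each line $\C\cdot\partial/\partial w_j^s$ is independent of $s$ and equals $\C\cdot\partial/\partial w_j$. Hence $N^s=\spanC\{\partial/\partial w_j:w_j=0\}=:N$ is constant in $s$, so the limit leaves it untouched and it contributes exactly the summand $\spanC\{\partial/\partial w_j:w_j=0\}$. In particular the vanishing locus $\{w_j=0\}$ is itself $s$-independent, so the subvariety $X_F=\{w_j=0:j\leq k\}$ is a $J_s$-holomorphic submanifold for every $s$, with holomorphic tangent space $T^s$ at $p$.

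For the tangent part I would apply Lemma~\ref{openo} to $X_F$, whose open orbit contains $p$. The point is that the induced family of potentials on the face again has tangential Hessian growing like $s\,\Hess(\psi|_F)>0$: the compression of the positive-definite matrix $\Hess\psi$ to the directions tangent to $F$ is again positive definite, so $\psi|_F\in C_+^\infty(F)$ and the hypotheses of Lemma~\ref{openo} hold on $X_F$. That lemma then gives $\lim_{s\to\infty}T^s=\ker\dd\mu_P|_p=\P_\R|_p$, where the real polarization of $X_F$ is identified with $\ker\dd\mu_P|_p$ because both equal the tangent space to the torus orbit through $p$.

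Finally I would reassemble the pieces. The limit $N$ of the normal part consists of directions normal to $X_F$, whereas the limit $\P_\R|_p$ of the tangent part lies in $(T_pX_F)_\C$, so $N\cap\P_\R|_p=0$ and the two limits are transversal. Since $N$ is constant in $s$ and $T^s\to\P_\R|_p$ with $N\oplus T^s=\P_\C^s$ for all $s$, the limit of the direct sum is the direct sum of the limits, yielding $\P^\infty=\P_\R\oplus\spanC\{\partial/\partial w_j:w_j=0\}$, of the expected dimension $n$. I expect this transversality and interchange-of-limits step to be the main obstacle: one must exclude that the tangent directions $\partial/\partial w_j^s$ with $j>k$ develop, in the limit, components that fail to split cleanly off the frozen subspace $N$. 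The cleanest way to secure it is to pass to the quotient by $N$, reducing the convergence to the genuinely open-orbit computation of Lemma~\ref{openo} on $X_F$, where the bound $G_s^{-1}\leq\frac{1}{s}(\Hess\psi)^{-1}\to 0$ does the work.
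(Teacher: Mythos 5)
Your proof is correct and follows essentially the same route as the paper's: the paper likewise splits $\P_\C^s$ at a face $F$ into the frozen lines $\spanC\{\partial/\partial w_j^s : j\in F\}$ (handled by the second lemma) and the complementary directions, for which it runs the Lemma \ref{openo} computation with the minor $(G_s)_F$ of the Hessian --- which it identifies with the Hessian of the restriction of $g_s$ to the face, i.e. exactly your compression $\Hess(\psi\vert_F)$ argument on $X_F$ --- so that $((G_s)_F)^{-1}\to 0$ gives convergence to $\spanC\{\partial/\partial\vartheta_k\}$. Your reduction to the toric subvariety $X_F$ and the paper's direct computation with the minor are the same calculation in different words, and the direct-sum/transversality point you flag is handled implicitly in the paper by the fact that one summand is constant in $s$.
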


\begin{proof}
It suffices to show the convergence \[
\spanC\{\frac{\partial}{\partial w_{k}^{s}}\}\to\spanC\{\frac{\partial}{\partial\theta_{k}}\}\]
 whenever $w_{k}\neq0$, which really is a small modification
of Lemma \ref{openo}. For any face $F$ in the coordinate neighborhood,
we write abusively $j\in F$ if $w_j=0$ along $F$. For
any such affine subspace we have then \begin{eqnarray}
\nn\P_\C^{s} & = & \spanC\{\frac{\partial}{\partial w_{j}^{s}}:j\in F\}\oplus\spanC\{\frac{\partial}{\partial y_{l_{k}}^{s}}-\ii\frac{\partial}{\partial\vartheta_{k}}:k\notin F\}=\\
\nn & = & \spanC\{\frac{\partial}{\partial w_{j}^{s}}:j\in F\}\oplus\spanC\{\sum_{k'\notin F}((G_s)_{F})_{kk'}^{-1}\frac{\partial}{\partial l_{k'}}-\ii\frac{\partial}{\partial\vartheta_{k}}:k\notin F\}\end{eqnarray}
 where $(G_s)_{F}$ is the minor of $G_{s}$ specified by the variables
that are unrestricted along $F$, which is well-defined and equals the Hessian of
the restriction of $g_s$ there. Since this tends to infinity, its inverse
goes to zero and the statement follows.
\end{proof}

This results in

\begin{proofb}{\ref{th1}} Given Theorem \ref{oanterior}, we are reduced
to proving that
\[
 C^\infty\left(\P_{\R}\oplus\spanC\{\frac{\partial}{\partial w_j}:w_{j}=0\}\right)
 = C^\infty(\P_\R).
\]
This is clear since any
smooth complexified vector field $\xi\in C^{\infty}(T^{\C}X)$
that restricts to a section of $\P_{\R}$ on an open dense subset
must satisfy $\overline{\xi}=\xi$ throughout. Such a vector field
cannot have components along the directions of
$\spanC\{\frac{\partial}{\partial w_{j}}:w_{j}=0\}$. 
\end{proofb}

\begin{rem} \label{defquant}
Note that the Cauchy-Riemann conditions hold for distributions
(see e.g. \cite{Gun} for the case $n=1$, or Lemma 2 in \cite{KY}),
that is, for any complex polarization $\P_\C$, considering the
intersection of the kernels of $\nabla''_{\pd{}{\ol{z}_j}}$
gives exactly the space
\[
 \bigcap_{\pd{}{\ol{z}_j} \in \ol{\P_{\C}}} \ker
 \nabla''_{\pd{}{\ol{z}_j}} = \iota H^{0}(X_P(\P_\C),L_\om(\P_\C))
 \subset (C^{\infty}(L_\om^{-1}))'
\]
 of holomorphic sections (viewed as distributions). Thus one can
view the 1-parameter family of quantizations associated to $g_{s}$
and the real quantization on equal footing, embedded in the space
of distributional sections, $\Q_s\subset (C^{\infty}(L_\om^{-1}))'$,
\[
 \Q_{s} := \left\{\sigma\in (C^{\infty}(L_\om^{-1}))'|
 \forall\, W\subset X_P \textrm{ open, } \forall\, \xi\in C^{\infty}(\ol{\P^{s}}\vert_W),
 \nabla_{\xi}''(\sigma\vert_{W})=0 \right\},
\]
for $s\in[0,\infty]$, where $\P^\infty := \P_\R$, and $\Q_\infty = \Q_\R$
in the previous notation. In the next Section, we will see that the
convergence of polarizations proved here translates eventually into
a continuous variation of the subspace $\Q_{s}$ in the space of
distributional sections as $s\to\infty$.
\end{rem}

\begin{rem}
Notice that Theorem \ref{th1} in this and Proposition \ref{del} in the previous
Section are also valid for non-compact $P$, with some obvious changes such as
taking test sections with compact support.
\end{rem}

\subsection{Degeneration of holomorphic sections and BS fibers}
\label{lcsl}

Here, we use convexity to show that, as $s\to\infty$, the holomorphic
sections converge, when normalized properly, to the distributional sections
$\delta^m$, described
in Proposition \ref{del}, and that are supported along the Bohr-Sommerfeld fibers of $\mu_P$ and
covariantly constant along the real polarization. 

First, we show an elementary lemma on certain Dirac sequences
associated with the convex function $\psi$ that will permit us to prove
Theorem \ref{th2}.

\begin{lem} 
\label{convex}
For any $\psi$ strictly convex in a neighborhood of
the moment polytope $P$ and any $m\in P\cap\Z^n$, the function
\[
 P \ni x \mapsto f_m(x) := \t(x-m)\pd{\psi}{x}-\psi(x) \in \R
\]
has a unique minimum at $x=m$ and
\[
 \lim_{s\to\infty} \frac{\e^{-sf_m}}{\|\e^{-sf_m}\|_1} \to \delta(x-m),
\]
in the sense of distributions.
\end{lem}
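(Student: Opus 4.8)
The statement has two logically independent parts: the variational claim that $f_m$ attains a strict global minimum at $x=m$, and the concentration of the associated normalized densities. The plan is to settle the minimum by convexity alone and then feed it into a standard Laplace-type (Dirac sequence) argument for the distributional limit.

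For the minimum, I would first compute the gradient. Differentiating $f_m(x)=\t(x-m)\nabla\psi(x)-\psi(x)$, the product rule produces a term $\nabla\psi(x)$ that cancels against the $-\nabla\psi(x)$ coming from $-\psi$, leaving
\[
 \nabla f_m(x) = \Hess\psi(x)\,(x-m).
\]
Since $\psi$ is strictly convex, $\Hess\psi(x)$ is positive definite, hence invertible, so $x=m$ is the unique critical point. To upgrade this to a \emph{global} minimum, and simultaneously dispose of $\partial P$ without a separate argument, I would note that $f_m(m)=-\psi(m)$ and rewrite
\[
 f_m(x)-f_m(m) = \psi(m)-\psi(x)-\t\nabla\psi(x)\,(m-x).
\]
The right-hand side is exactly the Bregman divergence of $\psi$ between $m$ and $x$; by the supporting-hyperplane inequality for the convex function $\psi$ it is nonnegative, and by \emph{strict} convexity it vanishes only at $x=m$. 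Thus $f_m(x)\ge f_m(m)$ throughout $P$ with equality iff $x=m$, which is the first assertion.

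For the distributional limit, set $\rho_s:=\e^{-sf_m}/\|\e^{-sf_m}\|_1$, a probability density on $P$ with respect to Lebesgue measure. Testing against $\phi\in C^\infty(P)$ amounts to showing $\int_P\phi\,\rho_s\,dx\to\phi(m)$, and since the $\rho_s$ are probability densities it suffices to prove that all the mass concentrates at $m$, i.e. $\int_{\{|x-m|\ge\varepsilon\}}\rho_s\,dx\to0$ for every $\varepsilon>0$. Here I would use the first part twice. Compactness of $P$ together with the strict inequality $f_m>f_m(m)$ off $m$ gives a gap
\[
 \eta:=\min_{x\in P,\;|x-m|\ge\varepsilon}\big(f_m(x)-f_m(m)\big)>0,
\]
so, after factoring $\e^{-sf_m(m)}$ out of numerator and denominator, the tail integral is bounded by $\Vol(P)\,\e^{-s\eta}$. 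On the other hand, by continuity of $f_m$ there is a neighborhood of $m$ in $P$ on which $f_m-f_m(m)<\eta/2$, and this neighborhood has positive Lebesgue measure, so the normalizing integral is bounded below by $c\,\e^{-s\eta/2}$ for some $c>0$. Dividing, the ratio is $O(\e^{-s\eta/2})\to0$, giving $\rho_s\to\delta(x-m)$.

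The only genuinely delicate point is the global-minimum claim, and the Bregman/convexity identity resolves it cleanly and uniformly up to and including $\partial P$, which is precisely where a naive local argument would break down. In the Laplace step the remaining subtlety is that for $m\in\partial P$ the concentration occurs against a truncated neighborhood; but I deliberately only need a crude exponential lower bound on the normalizing mass rather than sharp asymptotics, so the positivity of the solid angle of the polytope $P$ at $m$ is all that is required. If one wanted the precise $s^{-n/2}$ Gaussian rate, the non-degeneracy $\Hess f_m(m)=\Hess\psi(m)>0$ would supply it, but this is not needed for mere convergence.
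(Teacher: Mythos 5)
Your proposal is correct, and both of its halves take a genuinely different route from the paper's. For the minimum, you argue softly: the identity $f_m(x)-f_m(m)=\psi(m)-\psi(x)-\t\nabla\psi(x)\,(m-x)$ exhibits the difference as a Bregman divergence, nonnegative by the supporting-hyperplane inequality and zero only at $x=m$ by strict convexity. The paper instead writes $f_m(x)=-\psi(m)+\int_0^1 \t(x-m)\,\Hess_{m+t(x-m)}\psi\,(x-m)\,t\,\dd t$ and invokes a uniform Hessian bound $\Hess_x\psi>2cI$ near $P$ to get the \emph{quantitative} estimate $f_m(x)\geq-\psi(m)+c\|x-m\|^2$. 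For the concentration step you again argue softly, via the compactness gap $\eta=\min_{|x-m|\geq\varepsilon}\bigl(f_m-f_m(m)\bigr)>0$ and a positive-measure neighborhood where $f_m-f_m(m)<\eta/2$, whereas the paper plays the Gaussian lower bound on a small ball $B_r(m)$ (with $\Hess\psi\leq 2\alpha I$ there) against the Gaussian tail bound $\Vol(P)\,\e^{s\psi(m)}\e^{-sc\varepsilon^2}$, choosing $r$ with $\alpha r^2<c\varepsilon^2$. Your argument is more elementary and needs less regularity, and you are in fact slightly more careful than the paper about $m\in\partial P$ (the paper writes $\Vol(B_r(m))=d_nr^n$ without remarking that only a fixed solid-angle fraction of the ball lies in $P$). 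What the paper's quantitative route buys is the subsequent Corollary extending the lemma and Theorem \ref{th2} to non-compact $P$ under uniform strict convexity: there both pillars of your argument --- the compactness gap $\eta>0$ and the bound $\Vol(P)<\infty$ --- are unavailable, while the paper's pointwise estimate $f_m(x)\geq-\psi(m)+c\|x-m\|^2$ and the resulting Gaussian integrability carry over verbatim.
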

\begin{proof}
{}For $x$ in a convex neighborhood of $P$, using
$f_m(m)=-\psi(m)$ and $\nabla f_m(x)= \t (x-m) \Hess_x\psi$, we 
have
\begin{eqnarray}
\nn
f_m(x)&=& f_m(m) +\int_0^1 \frac{d}{dt}f_m(m+t(x-m)) \dd t =\\
\nn 
&=& - \psi(m) + \int_0^1 t \  \t(x-m) \left( \Hess_{m+t (x-m)}\psi \right) (x-m) \,\dd t.
\end{eqnarray}
Since $\psi$ is strictly convex, with
\[
 \Hess_x\psi > 2c I ,
\] 
for some positive $c$ and all $x$ in a neighborhood of $P$, it follows that 
$$
f_m(x)\geq -\psi(m) + c \|x-m\|^2.
$$
Obviously, $m$ is the unique absolute minimum of $f_m$ in $P$. For the
last assertion, we show that the functions 
\[
 \zeta_{s} := \frac{\e^{-sf_m}}{\|\e^{-sf_m}\|_1}
\]
form a Dirac sequence. (We actually show convergence as measures on $P$.)
It is clear from the definition that $\zeta_{s} >0$ and
$\|\zeta_{s}\|_{1}=1$, so it remains to show that the norms concentrate
around the minimum, that is, given any $\ep,\ep'>0$ we have to find a
$s_{0}$ such that
\[
 \forall s\geq s_{0}:\quad\int\limits _{B_{\ep}(m)}\zeta_{s}(x)\dd x\geq1-\ep'.
\]

Let $r_0>0$ be sufficiently small and let $2\alpha$ be the maximum of $\Hess_x\psi$ in 
$B_{r_0}(m)$. Observe that 
$$
\|\e^{-sf_m}\|_{1} = \int_P \e^{-sf_m(x)} \dd x \geq \int_{B_r(m)}\e^{-sf_m(x)} \dd x
\geq  d_n r^n \e^{s\psi(m)-s\alpha r^2},
$$ 
for any $r>0$ such that $r_0>r$, and where $d_n r^n= \Vol (B_r(m))$. On the other hand,
\begin{equation}
\label{estimate}
\int\limits _{P\setminus B_{\ep}(m)}\e^{-sf_m(x)}\dd x 
\leq  
\int\limits _{P\setminus B_{\ep}(m)}\e^{s\psi(m)-sc\|x-m\|^2}\dd x 
\leq \Vol(P) \e^{s\psi(m)} \e^{-sc\varepsilon^2}. 
\end{equation}
Therefore,
\begin{equation}
\nn
\int\limits _{P\setminus B_{\ep}(m)}\zeta_{s}(x)\dd x \leq
\frac{\Vol(P)\e^{-sc\varepsilon^2+s\alpha r^2}}{d_nr^n}.
\end{equation}
Choosing $r$ sufficiently small, the right hand side goes to zero as $s\to \infty$ and 
the result follows.
\end{proof}

Let now $\{\sigma^m_s\}_{m\in P\cap\Z^n}$ be the basis of holomorphic sections of $L_\om$, with respect to the holomorphic 
structure induced from the map $\chi_{g_s}$ in Section \ref{prelim}, that is $\sigma^m_s = \chi_{g_s}^*(\sigma^m)$, for $\sigma^m\in H^0(W_P,L_P)$.
We then have

\begin{proofb}{\ref{th2}} 
Using a partition of unity $\{\rho_{v}\}$ subordinated to the covering by the vertex coordinate charts 
$\{\iP_{v}\}$,
the result can be checked chart by chart. Let $\tau\in C^\infty(L_\om^{-1})$
be a test section and let
\begin{eqnarray}
\nn h_{m}^{s}(x)=\t(x-m)y-g_{s} & = & \big[\t(x-m)(\pd{g_{P}}{x}+\pd{\varphi}{x})-g_{P}-\varphi\big]+s\big[\t(x-m)\pd{\psi}{x}-\psi\big]=\\
\nn & = & h_{m}^{0}(x)+sf_m(x),
\end{eqnarray}
with $f_m$ as in Lemma \ref{convex}. Then
\begin{eqnarray*}
 (\iota(\xi_s^m))(\tau) & = & \frac{1}{\|\si_{s}^{m}\|_{1}}
 \sum_{v} \int\limits _{V_{v}} \rho_{v}\circ\mu_P(w_v) \e^{-h_m^{s}\circ\mu_P(w_v)}
 \e^{\ii \ell(m) \vartheta} \tau_{v}(w_v)\om^{n} = \\
 & = & \frac{1}{\|\si_{s}^{m}\|_{1}}
 \sum_{v} \int\limits _{\iP_{v}} \rho_{v}(x) \e^{-h_m^{s}(x)}
 \big( \int\limits_{\mu_P^{-1}(x)} \e^{2\pi\ii \ell(m) u}
 \tau_{v}(\e^{\pd{g_s}{l}(\ell(x))+2\pi \ii u})
 \dd u \big)
\dd x = \\
 & = &  \frac{1}{\|\si_{s}^{m}\|_{1}}
 \int\limits _{P} \e^{-h_m^{s}(x)} \widehat{\tau}(x,-m) \dd x,
\end{eqnarray*}
where $\widehat{\tau}$ is the fiberwise Fourier transform from equation (\ref{fwf}).

Now the $L^{1}$-norm in question calculates as \[
\|\sigma^m_s\|_{L^1}=\int\limits _{X_{P}}\e^{-h^{s}_m\circ\mu_P} \om^{n}=(2\pi)^n \int\limits _{P}\e^{-h_{m}^{s}}\dd x.\]
According to Lemma \ref{convex}, \[
\frac{\|\e^{-h_{m}^{0}-sf_m}\|_{1}}{\|\e^{-sf_m}\|_{1}}=\int\limits _{P} 
\frac{\e^{-sf_m}}{\|\e^{-sf_m}\|_{1}}\e^{-h_{m}^{0}}\dd x\to\e^{-h_{m}^{0}(m)}\textrm{ as }s\to\infty,\]
 and therefore
\[
 \iota(\xi_{s}^{m})(\tau) = \int\limits _{P}
 \frac{\e^{-h_{m}^{0}+sf_m}}{\|\e^{-h_{m}^{0}+sf_m}\|_{1}}
 \widehat{\tau}(.,-m)\dd x \to \widehat{\tau}(m,-m) = \de^m(\tau)
\]
which finishes the proof.
\end{proofb}

\begin{cor}
The results of Lemma \ref{convex} and Theorem \ref{th2} are valid 
for non-compact toric manifolds if one assumes uniform strict convexity of $\psi$. 
\end{cor}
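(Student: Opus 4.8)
The plan is to revisit the proofs of Lemma \ref{convex} and Theorem \ref{th2} and isolate the single place where compactness of $P$ (equivalently $\Vol(P)<\infty$) was used, replacing it by an estimate that global uniform strict convexity makes available. Throughout I would assume $\Hess_x\psi > 2cI$ for some fixed $c>0$ and \emph{all} $x\in P$, rather than merely in a neighborhood of a bounded $P$.

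For Lemma \ref{convex}, the derivation of the lower bound is unaffected: integrating $\Hess\psi$ along the segment from $m$ to $x$ still gives $f_m(x)\geq -\psi(m)+c\|x-m\|^2$, now valid on all of $P\subseteq\R^n$. In particular $\e^{-sf_m}$ is dominated by a Gaussian and is integrable over the unbounded $P$, so $\zeta_s=\e^{-sf_m}/\|\e^{-sf_m}\|_1$ is well defined; the lower bound $\|\e^{-sf_m}\|_1\geq d_n r^n\e^{s\psi(m)-s\alpha r^2}$ coming from a small ball $B_r(m)$ is local and carries over verbatim. The only step I would change is the tail bound (\ref{estimate}): since $\Vol(P)$ may now be infinite, I would replace it by the Gaussian tail integral
\[
\int\limits_{P\setminus B_\ep(m)}\e^{s\psi(m)-sc\|x-m\|^2}\dd x \leq \e^{s\psi(m)}\int\limits_{\R^n\setminus B_\ep(0)}\e^{-sc\|z\|^2}\dd z \leq \e^{s\psi(m)}\e^{-sc\ep^2/2}\Big(\tfrac{2\pi}{sc}\Big)^{n/2},
\]
where the last inequality uses $\|z\|^2\geq\tfrac12\ep^2+\tfrac12\|z\|^2$ for $\|z\|\geq\ep$. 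Dividing by the lower bound and choosing $r$ small (so that $\alpha r^2<c\ep^2/2$), the ratio still tends to $0$ as $s\to\infty$, so the $\zeta_s$ form a Dirac sequence and the conclusion holds unchanged.

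For Theorem \ref{th2}, I would test against compactly supported sections $\tau\in C^\infty_c(L_\om^{-1})$, as is natural in the non-compact setting. The partition of unity $\{\rho_v\}$ is subordinate to the vertex charts, and although there may now be infinitely many vertices, only the finitely many charts meeting $\mathrm{supp}\,\tau$ contribute; hence the chart-by-chart reduction producing
\[
\iota(\xi_s^m)(\tau) = \frac{1}{\|\sigma_s^m\|_1}\int\limits_P \e^{-h_m^s(x)}\,\widehat\tau(x,-m)\,\dd x
\]
goes through exactly as before, with $\widehat\tau(\cdot,-m)$ smooth and compactly supported on $P$. The passage to the limit then invokes the (now non-compact) Lemma \ref{convex} as in the compact proof, giving convergence to $\widehat\tau(m,-m)=\de^m(\tau)$.

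The main obstacle, and the precise reason uniform strict convexity is needed, is integrability over the unbounded domain. One must first check that $\|\sigma_s^m\|_1=(2\pi)^n\int_P\e^{-h_m^s}\dd x$ is finite so that the $L^1$-normalization in the statement makes sense; writing $h_m^s=h_m^0+sf_m$, the fixed part $h_m^0$ grows only like $x\log x$ at infinity (through $g_P$), while $sf_m$ grows quadratically by the global bound above, so for $s$ sufficiently large the Gaussian factor dominates and the integral converges. Once this finiteness is secured, every remaining estimate is just the Gaussian tail bound already used in the Lemma, and no further use of compactness is required.
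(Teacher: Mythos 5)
Your proposal is correct and follows essentially the same route as the paper: you isolate the single use of compactness (the tail estimate via $\Vol(P)$) and replace it by a Gaussian tail bound supplied by uniform strict convexity, exactly as the paper does (it uses polar coordinates and constants $M,M'$ where you use the split $\|z\|^2\geq\tfrac12\ep^2+\tfrac12\|z\|^2$), together with the observation that the global bound $f_m(x)\geq-\psi(m)+c\|x-m\|^2$ makes $\e^{-sf_m}$ integrable on the unbounded polytope. Your additional remarks on compactly supported test sections and on finiteness of $\|\sigma_s^m\|_1$ only spell out what the paper's proof leaves implicit.
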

\begin{proof}
In Lemma \ref{convex}, 
the estimate for $f_m(x)$ remains valid 
for any $x\in P$, so that the function $e^{-sf_m}$ will be integrable even if $P$ 
is not compact. As for the second part of the proof of Lemma \ref{convex},
instead of (\ref{estimate}) one can use
\begin{eqnarray}
\nn \int_{P\setminus B_\varepsilon(m)} \e^{-sf_m} \dd x & \leq & 
d_n \int_\varepsilon^{+\infty} \e^{s\psi(m)}\e^{-scr^2} r^{n-1} \dd r \leq
M d_n \int_\varepsilon^{+\infty} \e^{s\psi(m)}\e^{-s\frac{c}{2}r^2} \dd r \leq \\ \nn
& \leq & M d_n \e^{s\psi(m)} \e^{-s\frac{c}{2}\varepsilon^2}\int_0^{+\infty} \e^{-s\frac{c}{2}u^2} \dd u
\leq M' \e^{s\psi(m)} \e^{-s\frac{c}{2}\varepsilon^2}, 
\end{eqnarray}
for appropriate constants $M,M'>0$, where $M,M'$ depend on $s$ but are 
bounded from above as 
$s\to \infty$, so that the assertion follows.
\end{proof}

\section{Compact tropical amoebas}
\label{cpamoeba}

In this section, we undertake a detailed study of the behavior of the
compact amoebas in $P$ associated to the family of symplectic potentials in (\ref{gg})
\[
 g_s=g_P+\varphi+s\psi ,
\]
which define the complex structure $J_s$ on $X_P$, 
and of their relation to the $Log_t$ amoebas in $\R^n$ \cite{GKZ,Mi,FPT,R}.

Let $\breve{Z}_{s}\subset(\C^{*})^{n}$ be the complex hypersurface
defined by the Laurent polynomial
\[
\breve{Z}_{s}=\{ w\in(\C^{*})^{n}:\sum_{m\in P\cap\Z^{n}}a_{m} \e^{-sv(m)}w^{m}=0\},
\]
where $a_m\in \C^*, v(m)\in \R$.
 One natural thing to do in order to obtain the large K\"ahler structure
limit, consists in introducing the complex structure on $(\C^{*})^{n}$
defined by the complex coordinates $w=e^{z_{s}}$ where $z_{s}=sy+i\theta$,
and taking the $s\to+\infty$ limit. Then, the map $w\mapsto y$ coincides
with the $Log_{t}$ map for $s=\log t.$ However, this deformation
of the complex structure, which is well defined for the open dense
orbit $(\C^{*})^{n}\subset X_{P}$ never extends to any (even partial)
toric compactification of $(\C^{*})^{n}$. Indeed, that would correspond
to rescaling the original symplectic potential by $s$, which is incompatible
with the correct behavior at the boundary of the polytope found by
Guillemin and Abreu. 

As we will describe below, for deformations in the direction of quadratic $\psi$
in (\ref{gg}), in the limit we obtain the $Log_{t}$ map amoeba
intersected with the polytope $P$. The significative difference is that our
limiting tropical amoebas are now compact and live inside $P$. For
more general $\psi$, they live in the compact image of $P$ by the
Legendre transform $\L_{\psi}$ in (\ref{ltr}) and are determined by the locus of
non-dif\-fer\-en\-tia\-bi\-li\-ty of a piecewise linear function, namely as
the tropical amoeba of \cite{GKZ,Mi},
\[
 \Atrop := C^0\!-{\rm loc} \big( u\mapsto \max_{m\in P\cap\Z^n}
 \{ \t m u-v(m) \} \big) .
\]

\subsection{Limit versus tropical amoebas}

We are interested in the $\mu_P$-image of the family of (complex) hypersurfaces
\[
 Y_s := \{ p \in X_P: \sum_{m\in P\cap\Z^n} a_m \e^{-s v(m)} \sigma_s^m(p) = 0 \}
 \subset (X_P,J_s)
\]
where $a_m\in\C^*$ and $v(m)\in\R$ are parameters and $\sigma_s^m
 \in H^0\left((X_P,J_s),\chi_{g_s}^* L_P\right)$
is the canonical basis of holomorphic sections of the line bundle $\chi_{g_s}^* L_P$
associated with the polytope $P$ and the symplectic potential (\ref{gg}), introduced
in Section \ref{prelim}. We call 
the image $\mu_P(Y_s)\subset P$ the compact amoeba of $Y_s$ in $P$.

\begin{dfn} The limit amoeba $\Alim$ is the subset
\[
 \Alim := \lim_{s\to\infty} \mu_P(Y_s)
\]
of the moment polytope $P$, where the limit is to be understood
as the Hausdorff limit of closed subsets of $P$.
\end{dfn}

The existence of this limit is shown in the proof of Theorem \ref{amoeba_thm} below. 
We will
relate this amoeba to the tropical amoeba of \cite{GKZ,Mi} using a Legendre transform $\breve{\chi}_s$ that is the restriction
of the map $\chi_{g_s}$ described in Section \ref{prelim} to the open orbit $\breve{X}_P$:
\[
 \cdiag{
 Y_s \ar@{}[d]|\bigcap & \breve{Y}_s \ar[r]^{\cong} \ar@{_{(}->}[l] %
 \ar@{}[d]|\bigcap & \breve{Z}_s \ar@{}[d]|\bigcap \ar@{^{(}->}[r] & %
 Z_s \ar@{}[d]|\bigcap \\
 (X_P,\om,J_s,G_s) \ar[d]_{\mu_P} & \breve{X}_P \ar[r]_-{\breve{\chi}_s}^-{\cong} \ar[l] %
                   \ar[d]_{\mu_P} \ar@{_{(}->}[l]                      & %
                   (\C^*)^n \ar[d]^{Log_t} \ar@{^{(}->}[r]  & (W_P,J) \\
 P              & \iP \ar[r]_{\kappa_s}^{\cong} \ar@{_{(}->}[l]            & %
 \R^n &
}
\]
where $\kappa_s$ is the family of rescaled Legendre transforms
\[
 \iP \ni x \mapsto \kappa_{s}(x) := \frac{1}{s} \L_{(g_{P}+\varphi)+s\psi} =
 \pd{\psi}{x} + \frac 1s \pd{(g_P+\varphi)}{x} \in \R^n .
\]
For any $s>0$, this is a diffeomorphism $\iP\to\R^{n}$.

Let $\A_s:=Log_t(\breve{Z}_s)$ be the amoeba of \cite{GKZ,Mi}. Recall that
$\A_s\to\Atrop$ in the Hausdorff topology \cite{Mi,R}.

\begin{prop} The family of rescaled Legendre transforms $\kappa_s$ satisfies
\[ \kappa_s \circ \mu_P(\breve{Y}_s) = \A_s
\]
\end{prop}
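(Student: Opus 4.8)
The plan is to unwind the definitions from Section \ref{prelim} and verify that the square of the diagram relating $\breve{\chi}_s$, $Log_t$, $\mu_P$ and $\kappa_s$ commutes on the open orbit; the proposition then follows at once by applying $Log_t$ to the identity $\breve{\chi}_s(\breve{Y}_s)=\breve{Z}_s$.

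First I would make the biholomorphism $\breve{\chi}_s$ explicit. By the description in Section \ref{prelim}, on the open orbit $\breve{X}_P\cong\iP\times\T^n$ the map $\chi_{g_s}$ sends $(x,\theta)\mapsto w=\e^{y_s+\ii\theta}$, where $y_s=\pd{g_s}{x}$ is the Legendre transform determined by the symplectic potential $g_s$. In particular $|w_j|=\e^{(y_s)_j}$ for each $j=1,\dots,n$.

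Next I would identify $\breve{\chi}_s(\breve{Y}_s)$. Since $\chi_{g_s}$ is a biholomorphism intertwining the prequantum bundle $L_\om$ (with its $J_s$-holomorphic structure) and $L_P$, and by construction pulls back the monomial section $\sigma^m=w^m\,\breve{\mathbbm 1}$ to $\sigma_s^m=\chi_{g_s}^*\sigma^m$, the defining equation $\sum_m a_m\e^{-sv(m)}\sigma_s^m=0$ of $\breve{Y}_s$ is carried, in the holomorphic trivialization $\breve{\mathbbm 1}$, exactly to $\sum_m a_m\e^{-sv(m)}w^m=0$, which defines $\breve{Z}_s$. Hence $\breve{\chi}_s(\breve{Y}_s)=\breve{Z}_s$.

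It remains to compute $Log_t\circ\breve{\chi}_s$. With $t=\e^s$ one has $Log_t(w)_j=\tfrac1s\log|w_j|$, so using $|w_j|=\e^{(y_s)_j}$,
\[
 Log_t(\breve{\chi}_s(x,\theta))_j = \frac1s (y_s)_j = \frac1s\pd{g_s}{x_j}(x) = \kappa_s(x)_j .
\]
Since $\mu_P(x,\theta)=x$, this is precisely $Log_t\circ\breve{\chi}_s=\kappa_s\circ\mu_P$ on $\breve{X}_P$. Combining the two identifications gives
\[
 \A_s = Log_t(\breve{Z}_s) = Log_t(\breve{\chi}_s(\breve{Y}_s)) = \kappa_s(\mu_P(\breve{Y}_s)),
\]
as claimed. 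The only point that requires care is the second step, namely keeping the bundle trivializations straight so that the common zero locus of the holomorphic sections $\sigma_s^m$ matches the zero locus of the Laurent polynomial; but this is guaranteed by the very construction of $\chi_{g_s}$ and the correspondence $\sigma_s^m\leftrightarrow w^m$ set up in Section \ref{prelim}, so no genuine difficulty arises and the argument is essentially a matter of chasing the diagram.
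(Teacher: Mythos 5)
Your proof is correct and takes essentially the same approach as the paper: under the trivialization of $L_P$ determined by $g_s$ on the open orbit the sections $\sigma^m_s$ correspond to the monomials $w^m$ (so $\breve{\chi}_s(\breve{Y}_s)=\breve{Z}_s$), and then $Log_t w = \frac 1s \pd{g_s}{x} = \pd{\psi}{x}+\frac 1s \pd{(g_P+\varphi)}{x} = \kappa_s(x)$ gives $Log_t\circ\breve{\chi}_s = \kappa_s\circ\mu_P$. The paper's proof is simply a terser version of this same diagram chase.
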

\begin{proof} Under the trivialization of $L_P$ determined by $g_s$ on
the open orbit $\breve{X}_P$, the sections $\sigma^m_s(x,\theta)$ correspond to
polynomial sections $w^m$, where
\[
  w = \e^{\pd{(g_P+\varphi)}{x}+ s \pd{\psi}{x} +
   \ii \theta} .
\]
Combining this with the $Log_t$-map for $t=\e^s$ gives precisely
\[
 Log_t w = \pd{\psi}{x}+\frac 1s \pd{(g_P+\varphi)}{x} = \kappa_s(x) .
\]
\end{proof}

\begin{rem} Note that since $Y_s$ is defined as the zero locus of a
  global section, one has $Y_s = \overline{\breve{Y}_s}$
and, in particular,
$\mu_P(Y_s) = \overline{\kappa_s^{-1}\A_s}$.
On each face $F$ of the moment polytope $P$, this will consist exactly
of the amoeba defined by the sum of monomials corresponding to integer
points in $F$, cf. \cite{Mi}.
\end{rem}

The family of inverse maps $\kappa_s^{-1}$ will permit us
to capture information not only concerning the open orbit but also 
the loci of compactification of $X_P$.

\begin{lem} \label{k_conv} For any compact subset $C \subset \iP$ and any 
$\psi$ strictly convex on a neighborhood of $P$,
\[
 \kappa_s \to \L_\psi \textrm{ pointwise on } \iP \textrm{ and uniformly
 on } C
\]
and
\[
 \kappa_s^{-1} \to \L_\psi^{-1} \textrm{ uniformly on } \L_\psi C .
\]
\end{lem}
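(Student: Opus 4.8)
The plan is to prove the two convergence statements in sequence, starting from the explicit formula for $\kappa_s$ and then transferring the result to the inverse maps. Recall that
\[
 \kappa_s(x) = \pd{\psi}{x}(x) + \frac 1s \pd{(g_P+\varphi)}{x}(x) = \L_\psi(x) + \frac 1s \pd{(g_P+\varphi)}{x}(x),
\]
so the difference $\kappa_s - \L_\psi$ is exactly $\frac 1s$ times a fixed map that does not depend on $s$. The first statement is thus essentially immediate: on any compact $C\subset\iP$, the map $\pd{(g_P+\varphi)}{x}$ is smooth, hence bounded, so $\|\kappa_s-\L_\psi\|_\infty \leq \frac 1s \sup_C |\pd{(g_P+\varphi)}{x}| \to 0$ uniformly on $C$, and pointwise convergence on all of $\iP$ follows from the same estimate applied to singletons. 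I would record this first, as it is the easy half.

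The harder half is the uniform convergence $\kappa_s^{-1} \to \L_\psi^{-1}$ on $\L_\psi C$, because inverting does not automatically preserve uniform control. The main obstacle is that uniform convergence of maps does not in general imply uniform convergence of inverses; one needs a \emph{uniform} lower bound on the derivatives (equivalently, equi-Lipschitz control of the inverses) to prevent the Jacobians from degenerating as $s\to\infty$. This is where strict convexity of $\psi$ enters decisively. Since $\Hess_x\psi \geq 2cI$ on a neighborhood of $P$ for some $c>0$ (as in Lemma \ref{convex}), and since $\Hess_x(g_P+\varphi)$ is positive definite on $\iP$, the Jacobian $D\kappa_s = \Hess_x\psi + \frac 1s \Hess_x(g_P+\varphi)$ satisfies $D\kappa_s \geq \Hess_x\psi \geq 2cI$ uniformly in $s\geq 0$ and in $x$ near $P$. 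Hence every $\kappa_s$ is a diffeomorphism whose inverse has operator norm bounded by $\frac{1}{2c}$, i.e. the family $\{\kappa_s^{-1}\}$ is uniformly Lipschitz with constant $\frac{1}{2c}$.

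With this equi-Lipschitz bound in hand, the conclusion is a standard argument. For $u \in \L_\psi C$, write $x_s := \kappa_s^{-1}(u)$ and $x_\infty := \L_\psi^{-1}(u)$, so $\kappa_s(x_s) = u = \L_\psi(x_\infty)$. Then
\[
 2c\,|x_s - x_\infty| \leq |\kappa_s(x_s) - \kappa_s(x_\infty)| = |\L_\psi(x_\infty) - \kappa_s(x_\infty)| = \frac 1s \Big| \pd{(g_P+\varphi)}{x}(x_\infty) \Big|,
\]
using the uniform lower bound on $D\kappa_s$ in the first inequality. As $u$ ranges over the compact set $\L_\psi C$, the point $x_\infty = \L_\psi^{-1}(u)$ ranges over $C$, so the right-hand side is bounded by $\frac 1s \sup_C |\pd{(g_P+\varphi)}{x}|$, which tends to zero uniformly in $u$. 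This gives $\|\kappa_s^{-1} - \L_\psi^{-1}\|_\infty \to 0$ on $\L_\psi C$ and completes the proof. The one technical point to verify along the way is that the $x_\infty$ appearing above indeed lies in a fixed neighborhood of $P$ on which the convexity bound holds, which is automatic since $x_\infty \in C \subset \iP$; and that $x_s$ stays within such a neighborhood, which follows a posteriori from the estimate itself once $s$ is large.
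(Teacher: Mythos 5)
Your proof is correct and takes essentially the same route as the paper's: the first statement from boundedness of $\pd{(g_P+\varphi)}{x}$ on compact subsets of $\iP$, and the second from the uniform bound $\pd{\kappa_s}{x} \geq \Hess_x\psi \geq 2cI$, which makes the family $\kappa_s^{-1}$ equi-Lipschitz and lets the uniform convergence pass to the inverses. Your explicit chain of inequalities merely spells out the final step that the paper states more tersely, and your closing worry about $x_s$ is unnecessary since $x_s\in\iP$ automatically lies in the neighborhood where the convexity bound holds.
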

\begin{proof} Since $\pd{(g_P+\varphi)}{x}$ is a smooth function on $\iP$,
$\kappa_s \to \L_\psi$ pointwise on $\iP$ and uniformly on compact
subsets $C\subset \iP$. Furthermore,
\begin{equation} \label{unif_inj}
 \| \kappa_s(x)-\kappa_s(x') \| \geq c \| x-x' \|, \qquad \forall x,x' \in \iP,
\end{equation}
with a constant $c>0$ uniform in $s$, since the derivative
\[
 \pd{\kappa_s}{x} = \Hess_x \psi + \frac 1s \Hess_x (g_P+\varphi)
 > \Hess_x \psi > 0
\]
is (uniformly) positive definite. Therefore, the family of 
inverse mappings $\kappa_s^{-1}$ is uniformly Lipschitz (on $\R^n$).
Thus the pointwise convergence $\kappa_s^{-1}\to\L_\psi^{-1}$ on
$\L_\psi \iP$ is uniform on any compact $\L_\psi C$.
\end{proof}

Before proving the main theorem, we recall some facts about convex sets in $\R^n$ and also 
show two auxiliary lemmata on the
behavior of the gradient of any toric symplectic potential near the
boundary of the moment polytope. Consider any constant metric $G=\t G > 0$ on
$\R^n$. For an arbitrary closed convex polyhedral set $P\subset \R^n$ and any point
$p\in\partial P$, denote by $\CC_{p}^G$ the closed cone of directions
that are {}``outward pointing at $p$'' in the following sense,
\[
 \CC_{p}^G(P) := \{ c\in\R^{n}:\t c G (p-p')\geq0, \quad\forall p'\in P\}.
\]
Notice that for the Euclidean metric $G=I$, the cone of $p$ is precisely the
negative of the cone of the fan of $P$, corresponding to the face $p$ lies in;
in this case we will write $\CC_p := \CC_p^I$.

\begin{lem} \label{grad_boundary} For any sequence $x_k\in\iP$ that
  converges to a point in the boundary, $x_k\to p\in\partial P$, we have
\[
 \pd{(g_P+\varphi)}{x}\vert_{x_k} \to \CC_p(P) ,
\]
in the sense that for any $c\notin \CC_p(P)$ there is an open
neighborhood $U\ni p$ such that
\[
 \R_0^+ \pd{(g_P+\varphi)}{x}\vert_x \neq \R_0^+ c \qquad \forall x\in U\cap\iP .
\]
\end{lem}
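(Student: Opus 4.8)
The plan is to compute $v(x):=\pd{(g_P+\varphi)}{x}\big|_{x}$ explicitly and isolate the part that diverges as $x\to p$. Differentiating Guillemin's formula (\ref{gpp}) gives
\[
 \pd{g_P}{x}\Big|_{x}=\frac12\sum_{r=1}^d\nu_r\big(\log\ell_r(x)+1\big),
\]
and I would split the facets into the active set $I:=\{r:\ell_r(p)=0\}$ and its complement. For $r\notin I$ we have $\ell_r(p)>0$, so $\log\ell_r(x)$ stays bounded near $p$; together with the constant $\tfrac12\sum_r\nu_r$ and with $\pd{\varphi}{x}$ (bounded since $\varphi\in C^\infty(P)$), these assemble into a remainder $R(x)$ with $\|R(x)\|\le M$ on a fixed neighborhood of $p$. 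Setting $\beta_r(x):=-\tfrac12\log\ell_r(x)\to+\infty$ for each $r\in I$, the decomposition reads
\[
 v(x)=D(x)+R(x),\qquad D(x)=\sum_{r\in I}\beta_r(x)\,(-\nu_r).
\]
Since $D(x)$ is a nonnegative combination of the $-\nu_r$, $r\in I$, it lies in the closed cone generated by $\{-\nu_r:r\in I\}$, which is exactly $\CC_p(P)$ (the negative of the fan cone at the face containing $p$, as recorded above).

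Next I would argue that the \emph{direction} of $v(x)$ converges to this cone. Because $P$ is Delzant, the active normals $\{\nu_r\}_{r\in I}$ are part of a lattice basis and hence linearly independent, so the linear map $(\beta_r)_{r\in I}\mapsto\sum_{r\in I}\beta_r(-\nu_r)$ is injective and therefore bounded below: $\|D(x)\|\ge c_0\max_{r\in I}\beta_r(x)$ for some $c_0>0$. As $x\to p$ one has $\min_{r\in I}\ell_r(x)\to0$, so $\max_{r\in I}\beta_r(x)\to+\infty$ and thus $\|v(x)\|\ge\|D(x)\|-M\to\infty$. Using that $\CC_p(P)$ is a cone, the distance scales homogeneously, $\dist(tw,\CC_p(P))=t\,\dist(w,\CC_p(P))$ for $t>0$, and since $D(x)\in\CC_p(P)$,
\[
 \dist\!\Big(\tfrac{v(x)}{\|v(x)\|},\CC_p(P)\Big)=\frac{\dist(v(x),\CC_p(P))}{\|v(x)\|}\le\frac{\|R(x)\|}{\|v(x)\|}\le\frac{M}{\|v(x)\|}\longrightarrow0\quad\text{as }x\to p.
\]

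Finally I would conclude by the closedness of the cone. Given $c\notin\CC_p(P)$, note that $c\neq0$ (as $0\in\CC_p(P)$) and that $c/\|c\|\notin\CC_p(P)$, so $\rho:=\dist(c/\|c\|,\CC_p(P))>0$. Choosing the neighborhood $U\ni p$ small enough that $v(x)\neq0$ and $\dist(v(x)/\|v(x)\|,\CC_p(P))<\rho$ for every $x\in U\cap\iP$ then forces $v(x)/\|v(x)\|\neq c/\|c\|$; combined with $c\neq0$ this gives $\R_0^+v(x)\neq\R_0^+c$ on $U\cap\iP$, which is the assertion. The one genuinely delicate step is the non-cancellation yielding $\|v(x)\|\to\infty$: this is precisely where the Delzant condition (linear independence of the active facet normals) enters, and it is what upgrades the convergence of the direction to the cone from a sequential to a neighborhood-uniform statement.
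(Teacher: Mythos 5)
Your proof is correct and is essentially the paper's argument: both isolate the facets vanishing at $p$, whose logarithmically divergent contributions point along the outward normals $-\nu_r$ and hence into $\CC_p(P)$, and bound the contribution of the remaining facets and of $\varphi$ on a fixed neighborhood. The paper packages this via an affine $Gl(n,\Z)$ change of coordinates adapted to the face containing $p$ (so the cone becomes $\t A\big((\R_0^-)^k\times\{0\}\big)$ and the divergence is componentwise), whereas you stay in the original coordinates and finish with the linear-independence lower bound and the distance-to-cone normalization --- your final ray-separation step, which the paper leaves implicit, is actually spelled out more completely.
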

\begin{proof}
Suppose (using an affine change of coordinates $l(x) = Ax-\lambda$)
that $p$ lies in the codimension $k$ face, $k>0$, where $l_{1}=\dots=l_{k}=0$
and $l_{j}>0$ for $j=k+1,\dots,n$. We have
\[
 \CC_p = \{c\in\R^n: \t c A^{-1}(l(p)-l(p')) \geq 0, \forall p'\in P\} ,
\]
and $l_i(p-p') \leq 0$ for $i=1,\dots,k$, whereas there is no
restriction on the sign of $l_j(p-p')$ for $j=k+1,\dots,n$. Therefore,
\[
 c\in\CC_p \iff c= \t A \widetilde{c} \textrm{ with }
 \widetilde{c}\in (\R_0^-)^k\times\{0\} \subset \R^n .
\]
Since
\[
 \pd{(g_P+\varphi)}{x} = \t A \pd{(g_P+\varphi)}{l}
\]
it is therefore sufficient to prove that $\pd{(g_P+\varphi)}{l}$
approaches $(\R_0^-)^k\times\{0\}\subset\R^n$ as we get near $p$. Indeed,
we find that
\[
 \pd{g_{P}}{l}=\frac{1}{2}\pd{}{l}\sum_{a=1}^{d} \ell_{a}\log
  \ell_{a}=\frac{1}{2}\pd{}{l}\big(\sum_{i=1}^{k} l_{i}\log l_{i}+
 \sum_{j=k+1}^{n} l_{j}\log l_{j}+\sum_{m=n+1}^{d}\ell_{m}\log\ell_{m}\big)
\]
and hence
\begin{equation} \label{dgPdl}
 \pd{g_{P}}{l_{r}}=\frac{1}{2}(1+\log l_{r}+
\sum_{m=n+1}^{d}\pd{\ell_{m}}{l_{r}}(1+\log \ell_{m})).
\end{equation}
For $m>n$ (actually, for $m>k$), the sum is bounded in a neighborhood of
$p$ since $\pd{\ell_{m}}{l_{r}}$ is constant and $\ell_{m}>0$ at $p$.
Since $\pd{\varphi}{l_r}$ is also bounded for any $r$, $\pd{(g_{P}+\varphi)}{l_{j}}$
is bounded in a neighborhood of $p$ for $j=k+1,\dots,n$ and clearly
$\pd{(g_{P}+\varphi)}{l_{i}}\to-\infty$ for $i=1,\dots,k$ as we
approach $p$, which proves the lemma.
\end{proof}

In the following lemma, we relate the Legendre transforms $\kappa_s$ and $\L_\psi$
at large $s$, more precisely:

\begin{lem} \label{nbhd} For any two points $p\neq p'\in P$, there exist $\varepsilon>0$
and $s_0\geq 0$ that depend only on $p'$ and the distance $d(p,p')$, such that for all $s\geq s_0$
\[
 \overline{\kappa_s(B_\varepsilon(p')\cap\iP)} \cap \big( \L_\psi(p)+\CC_p(P) \big)
 = \emptyset .
\]
\end{lem}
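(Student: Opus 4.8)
The plan is to separate the two sets by the single linear functional $\t d$, where $d := p - p' \neq 0$. First I would put the normal cone on one side of the relevant hyperplane: if $c\in\CC_p(P)$, then taking $p''=p'$ in the defining inequality $\t c(p-p'')\geq 0$ gives $\t d\, c\geq 0$, so that
\[
 \L_\psi(p) + \CC_p(P) \subseteq \big\{ u\in\R^n : \t d\, u \geq \t d\,\L_\psi(p) \big\} .
\]
It therefore suffices to produce $\varepsilon,s_0$ so that $\t d\,\kappa_s(x) < \t d\,\L_\psi(p)$, with a gap uniform in $x$ and $s$, for all $x\in B_\varepsilon(p')\cap\iP$ and $s\geq s_0$; passing to the closure then keeps $\kappa_s(B_\varepsilon(p')\cap\iP)$ in the open complementary half-space, disjoint from the cone.

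I would then split $\kappa_s = \L_\psi + \frac1s\,\pd{(g_P+\varphi)}{x}$ and estimate the two pieces separately. For the first, strict convexity of $\psi$ makes $\L_\psi$ strictly monotone: writing $\L_\psi(p)-\L_\psi(p') = \int_0^1 \Hess_{p'+td}\psi\, d\,\dd t$ and using $\Hess\psi\geq cI$ on $P$ yields
\[
 \t d\,\L_\psi(p) - \t d\,\L_\psi(p') \geq c\,\|d\|^2 = c\, d(p,p')^2 ,
\]
a positive gap depending only on $d(p,p')$. Since $\L_\psi$ is Lipschitz on $P$ with constant $L=\sup_P\|\Hess\psi\|$, choosing $\varepsilon<\frac{c}{2L}\,d(p,p')$ forces $\t d\,\L_\psi(x)\leq \t d\,\L_\psi(p)-\frac{c}{2}\,d(p,p')^2$ for all $x\in B_\varepsilon(p')$.

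The crux is to bound the second piece $\t d\,\pd{(g_P+\varphi)}{x}$ from above, uniformly on $B_\varepsilon(p')\cap\iP$ and independently of $s$. Here I would pass to the affine coordinates $l$ adapted to $p'$, so that $p'$ lies in the face $l_1=\dots=l_k=0$ with $l_{k+1}(p'),\dots,l_n(p')>0$. Then $\t d\,\pd{(g_P+\varphi)}{x}=\t{\widetilde d}\,\pd{(g_P+\varphi)}{l}$ with $\widetilde d=Ad$, and the decisive sign comes from $\widetilde d_i = l_i(p)-l_i(p')=l_i(p)\geq 0$ for $i\leq k$, since $p\in P$ while $l_i(p')=0$. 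By the computation (\ref{dgPdl}) of Lemma \ref{grad_boundary}, after shrinking $\varepsilon$ so that $l_i<\delta\ll 1$ on the ball, the constrained components satisfy $\pd{(g_P+\varphi)}{l_i}\leq \frac12(1+\log\delta)+\mathrm{const}<0$, while the free components $\pd{(g_P+\varphi)}{l_j}$, $j>k$, stay bounded because $l_j$ is bounded away from $0$ near $p'$. As $\widetilde d_i\geq 0$, the constrained part contributes $\leq 0$, leaving
\[
 \t d\,\pd{(g_P+\varphi)}{x} \leq \sum_{j>k}|\widetilde d_j|\, M_j \leq \|A\|\, d(p,p')\sum_{j>k} M_j =: M
\]
for every $x\in B_\varepsilon(p')\cap\iP$, a constant depending only on $p'$ (through $A$ and the local bounds $M_j$) and on $d(p,p')$, and independent of $s$.

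Combining the two estimates gives $\t d\,\kappa_s(x)\leq \t d\,\L_\psi(p)-\frac{c}{2}\,d(p,p')^2+\frac{M}{s}$ on $B_\varepsilon(p')\cap\iP$, so taking $s_0:=4M/(c\, d(p,p')^2)$ secures a gap of at least $\frac{c}{4}d(p,p')^2$ for all $s\geq s_0$; this survives passage to the closure and, together with the first paragraph, yields the claimed disjointness, with $\varepsilon$ and $s_0$ depending only on $p'$ and $d(p,p')$. I expect the only genuine obstacle to be the uniform upper bound of the third paragraph: one must match the blow-up direction of $\nabla(g_P+\varphi)$ at $p'$ (the normal cone there, by Lemma \ref{grad_boundary}) against the separating functional $\t d$, so that the potentially divergent contribution carries the favorable sign rather than destroying the estimate. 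Everything else reduces to the monotonicity of $\L_\psi$ and the uniform convergence already recorded for $\kappa_s$.
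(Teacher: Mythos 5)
Your proof is correct, but it takes a genuinely different route from the paper's. The paper also produces a separating hyperplane, but it looks for one among the \emph{coordinate} hyperplanes of a facet-adapted chart: the same integral estimate you use for strict convexity is applied to single out one index $j$ with $\sgn (\pd{\psi}{l_j}\vert_p - \pd{\psi}{l_j}\vert_{p'}) = \sgn (l_j(p)-l_j(p'))$ and a quantitative gap, and the argument then splits into two cases according to this sign, bounding $\pd{g_P}{l_j}$ on $B_\varepsilon(p')\cap\iP$ from below (when $l_j(p)<l_j(p')$) or from above (when $l_j(p)>l_j(p')$) via equation (\ref{dgPdl}), so that the perturbation $\frac 1s \pd{g_P}{l_j}$ cannot close the gap for $s\geq s_0$. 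You instead separate with the single functional $\t d$, $d=p-p'$, and this buys two things: first, the containment $\L_\psi(p)+\CC_p(P)\subset\{u:\t d\, u\geq \t d\,\L_\psi(p)\}$ is immediate from the definition of the normal cone (take $p''=p'$), so disjointness from the \emph{whole cone}, not merely from the point $\L_\psi(p)$, needs no discussion of the chart description of $\CC_p$; second, there is no case analysis, because in the chart adapted to $p'$ the potentially divergent components $\pd{(g_P+\varphi)}{l_i}$, $i\leq k$, are paired against $\widetilde{d}_i = l_i(p)\geq 0$ and are eventually negative on the ball, hence contribute with the favorable sign, while the free components stay bounded near $p'$. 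This sign-pairing is exactly the right mechanism, and it makes the claimed dependence of $\varepsilon$ and $s_0$ on $p'$ and $d(p,p')$ alone completely transparent. What the paper's version buys in exchange is that its separating direction lives in the same adapted coordinates used in Lemma \ref{grad_boundary} to describe $\CC_p$, which dovetails with how the lemma is invoked in the proof of Theorem \ref{amoeba_thm}; what yours buys is robustness at the boundary, since lower bounds on $\pd{g_P}{l_j}$ near $p'$ are the delicate point when several facets meet there, whereas your estimate only ever uses the sign of the divergent terms.
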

\begin{proof} We will show that there is a hyperplane separating $\kappa_s(B_\varepsilon(p')\cap\iP)$
and $\L_\psi(p)+\CC_p(P)$; we continue to use a chart as in Lemma \ref{grad_boundary}.

For any two points $p\neq p'$,
\begin{eqnarray*}
  \t (l(p)-l(p')) (\pd{\psi}{l}\vert_p - \pd{\psi}{l}\vert_{p'}) & = &
  \t (l(p)-l(p')) \int_0^1 (\Hess_{p'+\tau(p-p')}\psi) \dd \tau (l(p)-l(p')) \geq \\
 & \geq & c_\psi \| l(p)-l(p') \| ^2 \ > \ 0
\end{eqnarray*}
where $c_\psi>0$ is a constant depending only $\psi$. This implies
that there is at least one index $j$ such that $l_j(p)\neq l_j(p')$,
\[
 \sgn (\pd{\psi}{l_j}\vert_p - \pd{\psi}{l_j}\vert_{p'}) = \sgn (l_j(p)-l_j(p')),
\]
and also
\[
 \left| \pd{\psi}{l_j}\vert_p - \pd{\psi}{l_j}\vert_{p'} \right| \geq \frac{c_\psi}{n} | l_j(p)-l_j(p')|.
\]
Choose $\varepsilon > 0$ small enough (this choice depends on $\psi$ only) so that
\[
 \pd{\psi}{l_j}\vert_p \notin \pd{\psi}{l_j}(B_\varepsilon(p')\cap\iP)
\]
and consider first the case that $0\leq l_j(p) < l_j(p')$. For all
$x\in B_\varepsilon(p')\cap\iP$, from equation (\ref{dgPdl}),
\[
 \pd{g_P}{l_j}\vert_x \geq c_1 \log^- (l_j(p')-\varepsilon)+c_2 = C,
\]
where $\log^-$ denotes the negative part of the logarithm, and $c_1,c_2,C$ are 
constants depending only on $p'$ and $\varepsilon$. Then, for any $\delta > 0$ such that
\[
 \pd{\psi}{l_j}\vert_p+\delta \notin \pd{\psi}{l_j}(B_\varepsilon(p')\cap\iP),
\]
we find $s_0 = \frac{2|C|}{\delta}$ so that
\[
 \pd{\psi}{l_j}\vert_x+\frac 1s \pd{g_P}{l_j}\vert_x \geq \pd{\psi}{l_j}(p)+\frac{\delta}{2} ,
 \qquad \forall x\in B_\varepsilon(p')\cap\iP .
\]
Hence, also
\[
 \pd{(\psi+\frac 1s \varphi)}{l_j}\vert_x+\frac 1s \pd{g_P}{l_j}\vert_x \geq \pd{\psi}{l_j}(p)+\frac{\delta}{4} ,
 \qquad \forall x\in B_\varepsilon(p')\cap\iP,
\]
for $s$ big enough (the additional condition depending only on $\varphi$, which is globally controlled
on the whole polytope $P$), which proves our assertion.

If, on the other hand, $0\leq l_j(p') < l_j(p)$, we see again from equation (\ref{dgPdl})
that
\[
 \pd{g_P}{l_j} \leq c'_1 \log^+ l_j+c'_2
\]
on $B_\varepsilon(p')\cap\iP$, where $\log^+$ stands for the positive part of the logarithm. Again,
\[
 \pd{g_P}{l_j}\vert_x \leq c'_1 \log^+ (l_j(p')+\varepsilon)+c'_2 = C',  \qquad  \forall x\in B_\varepsilon(p')\cap\iP
\]
and the same argument applies.
\end{proof}

We will now characterize the limit amoeba in terms of the tropical
amoeba via a projection $\pi$ that can be described as follows.

\begin{lem} \label{proj} For any strictly convex $\psi$ as above, there exists a
partition of $\R^n$ indexed by $P$ of the form
\begin{equation} \label{part}
 \R^n = \coprod_{p\in P} \L_\psi(p)+\CC_p(P) .
\end{equation}
In particular, there is a well-defined continuous projection $\pi:\R^n\to\L_\psi P$
given by
\[
 \pi(\L_\psi(p)+\CC_p(P)) = \L_\psi(p) .
\]
\end{lem}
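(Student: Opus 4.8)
The plan is to identify the claimed decomposition of $\R^n$ with the solution of a parametrized convex optimization problem, thereby reducing the existence, disjointness, and continuity statements to standard facts about minimizers of strictly convex functions over a compact convex set. Concretely, for each $u\in\R^n$ I would introduce the auxiliary function $\Phi_u:P\to\R$, $\Phi_u(p)=\psi(p)-\t u p$. Since $\psi$ is strictly convex on a neighborhood of $P$ and $P$ is compact and convex, $\Phi_u$ is strictly convex and attains a unique minimum at a point $p^*=p^*(u)\in P$.

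The key observation is that the first-order optimality condition for this constrained minimum is the variational inequality $\t\nabla\Phi_u(p^*)(p'-p^*)\geq 0$ for all $p'\in P$. Because $\nabla\Phi_u(p)=\L_\psi(p)-u$, this rearranges to $\t(u-\L_\psi(p^*))(p^*-p')\geq 0$ for all $p'\in P$, which is exactly the statement $u-\L_\psi(p^*)\in\CC_{p^*}(P)$, i.e. $u\in\L_\psi(p^*)+\CC_{p^*}(P)$. This shows that every $u$ lies in at least one cell, giving the covering $\R^n=\bigcup_{p\in P}\bigl(\L_\psi(p)+\CC_p(P)\bigr)$. For disjointness I would use that, for a convex function, the same variational inequality is not only necessary but also \emph{sufficient} for a global minimum: thus $u\in\L_\psi(p)+\CC_p(P)$ holds precisely when $p$ minimizes $\Phi_u$, and strict convexity forces the minimizer to be unique. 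Hence no $u$ can belong to two distinct cells, upgrading the union to the disjoint union $\coprod_{p\in P}$. (One checks in passing that $0\in\CC_p(P)$ always, so each $\L_\psi(p)$ lies in its own cell, and that $\CC_p(P)=\{0\}$ for $p\in\iP$, so the interior cells recover $\L_\psi(\iP)$.)

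The projection is then defined by $\pi(u):=\L_\psi(p^*(u))$, which lands in $\L_\psi P$ since $p^*(u)\in P$, and the required identity $\pi(\L_\psi(p)+\CC_p(P))=\L_\psi(p)$ is immediate from the construction. For continuity I would invoke the standard stability of argmins: if $u_n\to u$, compactness of $P$ extracts a subsequence with $p^*(u_{n_k})\to q$; passing to the limit in $\Phi_{u_{n_k}}(p^*(u_{n_k}))\leq\Phi_{u_{n_k}}(p)$ via joint continuity of $(u,p)\mapsto\Phi_u(p)$ shows $q$ minimizes $\Phi_u$, whence $q=p^*(u)$ by uniqueness. Since every subsequential limit equals $p^*(u)$, the full sequence converges, so $p^*$ and therefore $\pi=\L_\psi\circ p^*$ are continuous. (Alternatively, the uniform lower bound $\Hess\psi\geq 2cI$ yields a direct Lipschitz estimate for $p^*$.)

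I expect the main obstacle to be comparatively minor once the reformulation is in place: the delicate points are correctly orienting the sign convention in the definition of $\CC_p(P)$ so that the optimality inequality really produces the \emph{outward} cones, and insisting on the sufficiency (not merely necessity) of the variational inequality, which is what delivers disjointness. After the argmin characterization is set up, existence, disjointness, the projection formula, and continuity all follow from routine convex-analysis arguments.
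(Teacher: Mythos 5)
Your proposal is correct, and it is organized differently from the paper's own proof. The paper only argues disjointness, and does so by a direct monotonicity computation: if $\L_\psi(p)+c=\L_\psi(p')+c'$ with $c\in\CC_p(P)$, $c'\in\CC_{p'}(P)$, the cone definitions give $\t(c-c')(p-p')\geq 0$, while writing $c-c'=\L_\psi(p')-\L_\psi(p)$ and expressing this difference as $-\int_0^1 \t(p-p')\,(\Hess_{p+\tau(p'-p)}\psi)\,(p-p')\,\dd\tau<0$ yields a contradiction; the covering of $\R^n$ and the continuity of $\pi$ are left implicit, presumably as standard facts about convex projections. Your argmin reformulation supplies exactly those missing pieces: existence of the minimizer of $\Phi_u(p)=\psi(p)-\t u p$ gives the covering, sufficiency of the variational inequality plus uniqueness of the minimizer gives disjointness (this is where strict convexity enters for you, playing the role of the paper's integral estimate), and argmin stability gives continuity. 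So the two disjointness arguments rest on the same underlying fact (strict monotonicity of $\nabla\psi$), but your packaging is more self-contained and proves the full statement of the lemma, while also explaining $\pi$ as a Legendre-dual best-approximation map, which is precisely the intrinsic description the paper only records later, in the remark following the lemma, for quadratic $\psi$.
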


\begin{proof}
It suffices to show that for $p\neq p'$,
\[
 \big( \L_\psi(p)+\CC_p(P)\big) \cap \big(
 \L_\psi(p')+\CC_{p'}(P)\big) = \emptyset .
\]
To see this, assume that
\[
 \L_\psi(p)+c = \L_\psi(p')+c', \textrm{ with } c\in\CC_p(P),c'\in\CC_{p'}(P) .
\]
Then $\t(c-c')(p-p')\geq 0$, from the definition of the cones; on the other hand,
\begin{eqnarray*}
 \t (p-p') (c-c') & = & \t (p-p') (\L_\psi(p')-\L_\psi(p)) = \\
 & = & \t (p-p') \int_0^1 (\Hess_{p+\tau(p'-p)}\psi) \dd \tau (p'-p) < 0,
\end{eqnarray*}
which is a contradiction.
\end{proof}

\begin{rem} For quadratic $\psi$ with  $\t G = G > 0$ symmetric and
positive definite there is a more intrinsic description of the map $\pi$:
it is given by the projection of $\R^{n}$ on the closed
convex subset $P$ under which each point projects onto its best
approximation in the polytope $\L_\psi P$ with respect to the metric
$G^{-1}$ (see, for instance, chapter \textsc{v} of \cite{Bou}, and also
Figure \ref{fig1}),
\[
 p = \pi(y) \iff p\in \L_\psi P \land \forall p'\in \L_\psi P\setminus\{p\}:
  \| y-p \|_{G^{-1}} < \| y-p' \|_{G^{-1}}.
\]
Note also that
\[
\forall y\in\R^{n}: y-\pi(y) \in \CC_{\pi(y)}^{G^{-1}}(\L_\psi P)
\]
and that, in fact, $\pi(y)$ is characterised
by this property, i.e.
\[
 \forall p\in \L_\psi P: y-p\in\CC_{p}^{G^{-1}}(\L_\psi P) \iff \pi(y)=p.
\]
In this sense, $\CC_p^{G^{-1}}(\L_\psi P)$ is a kind of ``convex kernel at $p$'' of the
convex projection $\pi$.
Note, by the way, that in this case $\CC_x(P) = \CC_{\L_\psi(x)}^{G^{-1}}(\L_\psi P)$.
\end{rem}

\begin{figure}
 \begin{centering}
 \includegraphics[width=9cm,height=7cm,keepaspectratio,angle=-90]{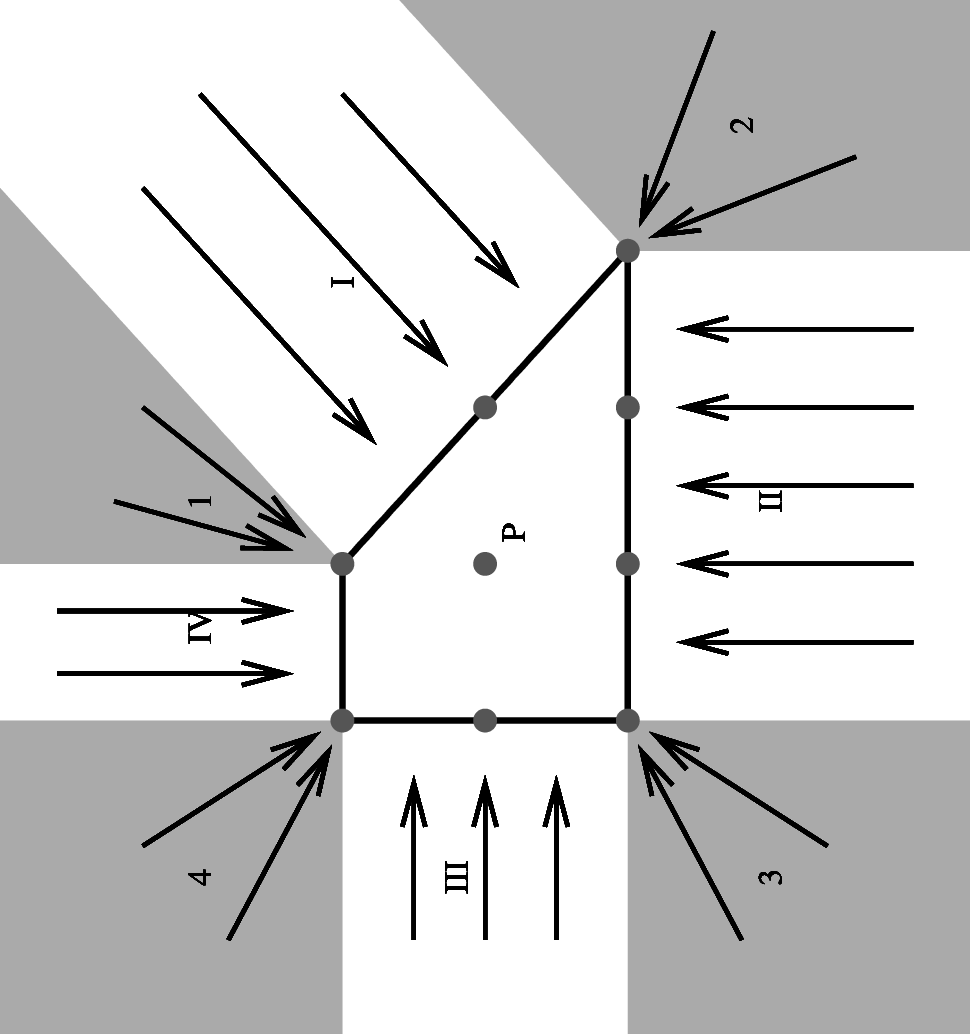}
 \par\end{centering}
 \caption{The map $\pi$.}
 \label{fig1}
\end{figure}

Finally, we have

\begin{proofb}{\ref{amoeba_thm}}
We first show that $\L_\psi \circ \kappa_s^{-1} \to \pi$ pointwise on
$\R^n$. For points in the interior of $\L_\psi P$, where $\pi\vert_{\iP} =
\mathrm{id}_{\iP}$, this is clear from Lemma \ref{k_conv}.

Consider, therefore, any point $y\notin \L_\psi\iP$, its family of
inverse images $x_s = \kappa_s^{-1}(y)\in\iP$, and any convergent
subsequence $x_{s_k}\to p$. Then the limit lies in the boundary,
$p\in\partial P$. We need to show that $\L_\psi(p) = \pi(y)$, or, what
is the same, that $y-\L_\psi(p) \in \CC_{p}(P)$. This is guaranteed by Lemma \ref{grad_boundary},
\[
 \frac 1s \pd{(g_P+\varphi)}{x}\vert_{x_{s_k}} = (\kappa_s-\L_\psi)(x_{s_k}) = y-\L_\psi(x_{s_k})\to
 y-\L_\psi(p) \in \CC_p(P) ,
\]
and proves pointwise convergence.

Now we can use compactness of $P$ (and hence of the space of closed
non-empty subsets of $P$ with the Hausdorff metric) to show the
result. Throughout the proof we will not distinguish between sets and
their closures since the Hausdorff topology does not separate them.

Let us first show that
\[
 \cdiag{
  \kappa_{s}^{-1}\Atrop \ar[r]^-{\mathrm{H}} & \L_\psi^{-1}\circ\pi\Atrop
 } .
\]
Take any convergent subsequence $\cdiag{\kappa_{s_k}^{-1}\Atrop \ar[r]^-{\mathrm{H}} & K \subset P}$;
since $\kappa_s^{-1}\to\L_\psi^{-1}\circ\pi$ pointwise, it follows that
\[
 K \supset \L_\psi^{-1}\circ\pi\Atrop .
\]
{}For the other inclusion, consider any point $p'\notin \L_\psi^{-1}\circ\pi\Atrop$; since
the distance of $p'$ to $\L_\psi^{-1}\circ\pi\Atrop$ is strictly positive, by Lemma
\ref{nbhd} there is a neighborhood $U$ of $p'$ in $\iP$ and a $s_0$ such that for all
$p\in\L_\psi^{-1}\circ\pi\Atrop$ and $s\geq s_0$, the sets $\kappa_s(U)$ and
$\L_\psi(p)+\CC_p(P)$ not only have empty intersection but are actually separated
by a hyperplane. But this implies, in particular, that for $s$ large enough
\[
 U\cap \kappa_s^{-1}\Atrop = \emptyset
\]
and $p'\notin K$, as we wished to show.

In the last step, we prove that $\cdiag{\kappa_{s}^{-1}\A_s \ar[r]^-{\mathrm{H}} &
\L_\psi^{-1}\circ\pi\Atrop}$. Again, using compactness, it is sufficient to
consider any convergent subsequence $\cdiag{\kappa_{s_k}^{-1}\A_{s_k} \ar[r]^-{\mathrm{H}} & K'}$.

To show that $\L_\psi^{-1}\circ\pi\Atrop \subset K'$, it is sufficient to observe that
$\Atrop\subset\A_{s_k}$ (see \cite{GKZ,Mi}) and hence $\kappa_{s_k}^{-1}\Atrop \subset
\kappa_{s_k}^{-1}\A_{s_k}$.

For the converse inclusion $K'\subset \L_\psi^{-1}\circ\pi\Atrop$, consider the constant $c$
from inequality (\ref{unif_inj}) above, and set
\[
 \varepsilon_k := \frac 1c \dist(\A_{s_k},\Atrop) .
\]
This sequence converges to zero, and therefore the closed
$\varepsilon_k$-neighborhoods
$(\kappa_{s_k}^{-1}\Atrop)_{\varepsilon_k} \supset
\kappa_{s_k}^{-1}\Atrop$ still converge to $\L_\psi^{-1}\circ\pi\Atrop$. But as $\kappa_{s_k}$
satisfies the uniform bound in (\ref{unif_inj}),
\[
 \kappa_{s_k}((\kappa_{s_k}^{-1}\Atrop)_{\varepsilon_k}) \supset
 (\Atrop)_{c\varepsilon_k} \supset \A_{s_k}
\]
and hence
\[
 (\kappa_{s_k}^{-1}\Atrop)_{\varepsilon_k} \supset
 \kappa_{s_k}^{-1} \A_{s_k}
\]
which proves the second inclusion.
\end{proofb}

Low-dimensional examples of the relation between tropical and limit amoebas are illustrated in Figures \ref{fig112} to \ref{fig18} below. In the following remarks, we collect basic facts about limit amoebas and their relation to their tropical counterparts.
\begin{figure}
\[
\begin{centering}
 \begin{array}{c}
 \xymatrix@C=-2.5cm{
 \includegraphics[width=5cm,keepaspectratio]{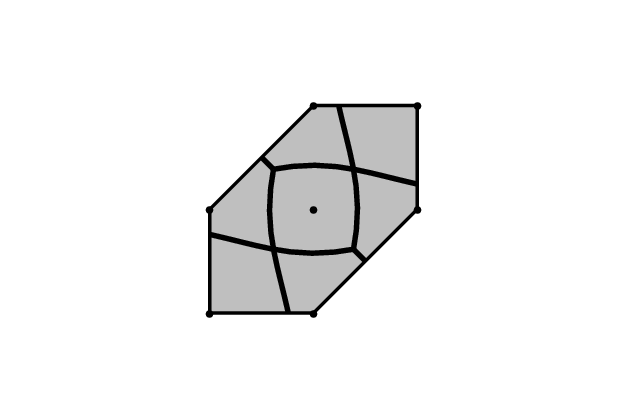} \ar[rd]_-{\L_\psi} & &
 \includegraphics[width=5cm,keepaspectratio]{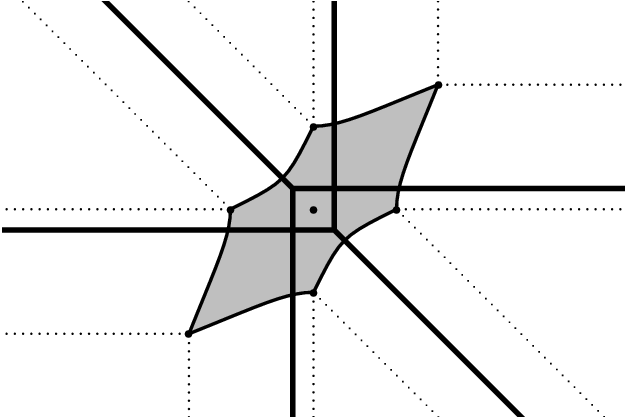} \ar[ld]^-{\pi}   \\
 & \includegraphics[width=5cm,keepaspectratio]{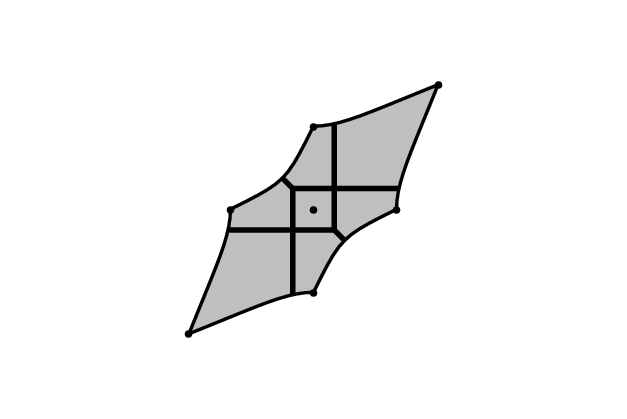}  & \\
 }
 \end{array}
 \end{centering}
\]
\caption{The situation of the main theorem (for $\psi$ not quadratic): $P\supset\Alim$ (top left), $\L_\psi P$ with the cones
of projection $\CC_p$ (dotted) and the tropical amoeba $\Atrop$ (top right) and $\L_\psi P \supset \pi\Atrop = \L_\psi \Alim$
(bottom).}
\label{fig112}
\end{figure}

\begin{rem} \begin{itemize} \item[(i)] \label{ex1} The first fact to catch the eye about the limit amoebas is that they depend on more parameters than the tropical amoebas: while the latter are determined by the valuation $v(m)$, the former vary heavily, depending on the direction $\psi$ of the geodesic ray $g_P+\varphi+s\psi$ we follow. This reflects the fact that we look at the family of hypersurfaces in different categories: while the complex biholomorphism class of the hypersurface $Y_s \subset X_P$ is independent of the K\"ahler metric we put on $X_P$, the Hausdorff limit of $\mu_P(Y_s) \subset P$ does vary substantially. This is illustrated for the simplest possible example, $\mathbb{P}^2$, with moment polytope the standard simplex in $\R^2$ and valuation $v(0,0)=0$, $v(1,0)=\frac 12$,
$v(0,1)=\frac 14$, in Figure \ref{fig2}.
\begin{figure}[!h]
\[
 \begin{centering}
 \begin{array}{c}
 \xymatrix@C=0cm{
\includegraphics[width=3.2cm,keepaspectratio]{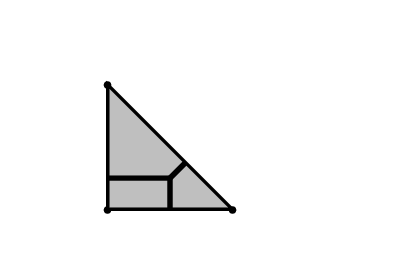} \ar[d]^-{\L_\psi=\mathrm{id}} &
\includegraphics[width=3.2cm,keepaspectratio]{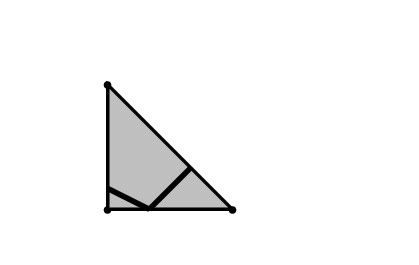}
 \ar[d]_-{\L_\psi=\frac 14\left[\begin{array}{@{}cc@{}} 6 & 3 \\ 3 & 6 \end{array}\right]} &
\includegraphics[width=3.2cm,keepaspectratio]{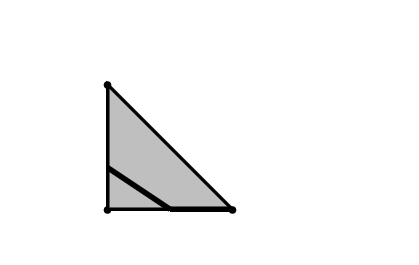}
 \ar[d]_-{\L_\psi=\frac 14\left[\begin{array}{@{}cc@{}} 3 & 2 \\ 2 & 3 \end{array}\right]} \\
\includegraphics[width=3.2cm,keepaspectratio]{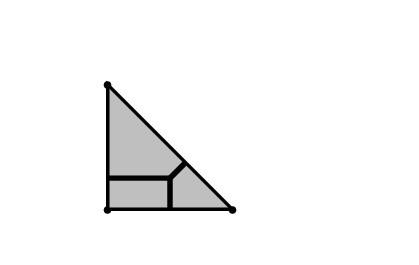} &
\includegraphics[width=3.2cm,keepaspectratio]{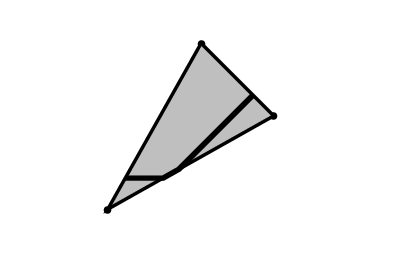} &
\includegraphics[width=3.2cm,keepaspectratio]{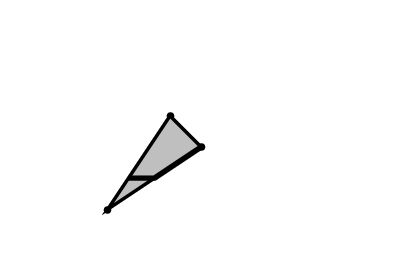} \\
\includegraphics[width=3.2cm,keepaspectratio]{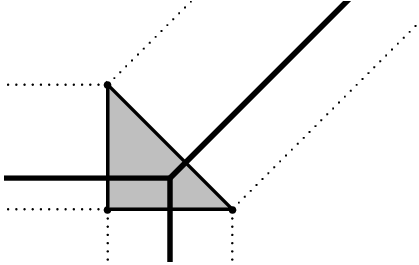} \ar[u]_-{\pi} &
\includegraphics[width=3.2cm,keepaspectratio]{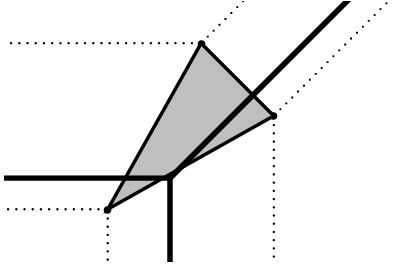} \ar[u]^-{\pi} &
\includegraphics[width=3.2cm,keepaspectratio]{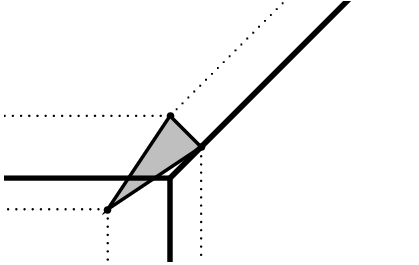} \ar[u]^-{\pi} \\
 }
 \end{array}
 \end{centering}
\]
\caption{Limit amoebas ($\Alim\subset P$, top row), their image under $\L_\psi$, and tropical amoebas ($\Atrop$, bottom row)
for different quadratic $\psi$ and fixed valuation $v$.}
\label{fig2}
\end{figure}
\item[(ii)] It is clear from this example already that it is not, in general, possible to recover the tropical amoeba from the limit amoeba, although we have (from the proof of the theorem) that always
\[
 \L_\psi \left( \Alim \cap \breve P \right) = \Atrop \cap \L_\psi \breve P .
\]
(For example, if $\psi$ is quadratic (thus $\L_\psi$ linear), the limit amoeba itself will be piecewise linear).
\item[(iii)] There is, however, an open set of valuations $v$ and directions $\psi$ such that the projection $\pi\Atrop$ will coincide with $\L_\psi P\cap \Atrop$;
this happens whenever the ``nucleus'' (i.e. the complement of all unbounded hyperplane pieces) of the tropical amoeba $\Atrop$ lies inside $\L_\psi P$,
since the infinite legs run off to infinity along directions in the cone of the
relevant faces. This situation is depicted in Figure \ref{fig112} for a limit of elliptic curves
in a del Pezzo surface,
\begin{eqnarray*}
 P & = & \langle (1,0),(1,1),(0,1),(-1,0),(-1,-1),(0,-1) \rangle \\
 \psi(x) & = & \frac{x^2}{2}+\frac{\|x\|^4}{4} \\
 v(m) & = & \frac{m^2}{2} .
\end{eqnarray*}
\end{itemize}
\end{rem}

At this point, naturally the question arises how much of the information encoded by tropical amoebas can be recovered from the compact limit amoebas, which we turn to now.

\subsection{Compact amoebas and enumerative information} Even without establishing a precise criterion for when the limit amoeba permits the recovery of the tropical amoeba, we can address these questions qualitatively.

\begin{prop}\begin{itemize} \item[(i)] For a fixed potential $\psi$, there is a set (with non-empty interior) of valuations such that the tropical amoeba can be recovered from the limit amoeba.
\item[(ii)] Conversely, for a fixed valuation $v$, there is a set (with non-empty interior) of potentials $\psi$ such that the tropical amoeba can be recovered from the limit amoeba.
\end{itemize}
\end{prop}
\begin{proof} Both assertions follow from the fact that scaling $\psi$ and $v$ (separately), the image of the polytope $\L_\psi P$ can be made arbitrarily big in relation to the tropical amoeba, since the tropical amoeba furthermore is determined once we reach its ``tentacles'' (the unbounded parts of hyperplanes).

In particular, the set of valuations in (i) contains all valuations such that $\L_\psi \Alim = \Atrop \cap P$.
\end{proof}

It is evident that for a fixed potential, only a bounded set of valuations will permit recovery of the tropical amoeba; for a fixed valuation or, actually, \emph{any bounded set} of valuations, any potential that is ``large enough'' will do. Applying this, for example, to the enumerative problem studied in \cite{Mi2,GM}, we immediately obtain:

\begin{cor} Let $P=\langle \vect{0,0}, \vect{d,0}, \vect{0,d} \rangle$. For a fixed set $S$ of $3d-1+g$ points in the plane in tropically generic position, there is a set (with non-empty interior) of potentials $\psi$ on $P$ such that the set of tropical curves of genus $g$ through $S$ is in bijective correspondence with the set of limit amoebas of genus $g$ through $\L_\psi^{-1} S$ in $P$.
\end{cor}

\subsection{Implosion of polytopes versus explosion of fans}

For simplicity, in the present subsection, we restrict ourselves to
the case $\psi(x) = \frac 12 x^2$. 
We remark that while the map $\pi$ projects onto $P$, the map $id-\pi$ 
is injective in the interior of the cones $v+\CC_v$ for all vertices
$v\in P$
(regions $1-4$ in figure \ref{fig3}).
For a face $F$ of dimension $k>0$ of $P$, the region $\breve F+\CC_p$, 
for any $p\in \breve F$, implodes to the cone $\CC_p$ of codimension
$k$. In particular, the polytope $P$ implodes to the origin.
\begin{figure}
\[
\begin{centering}
 \begin{array}{c}
 \xymatrix@C=-2cm{
 & \includegraphics[width=3.5cm,angle=-90]{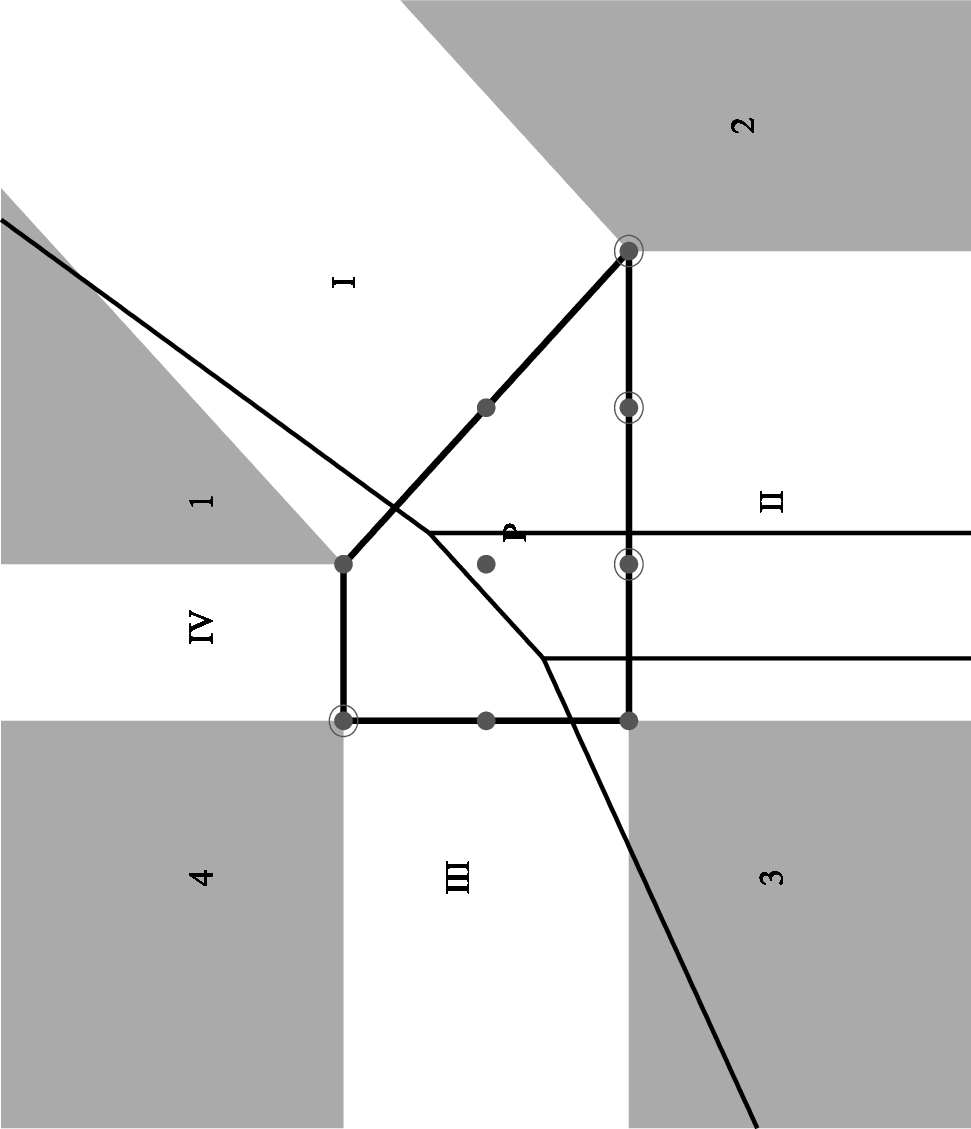}
 \ar[rd]^{\mathrm{id}-\pi} \ar[ld]_{\pi} & \\
\includegraphics[width=3.5cm,bb=200bp 240bp 404bp 519bp,angle=-90]{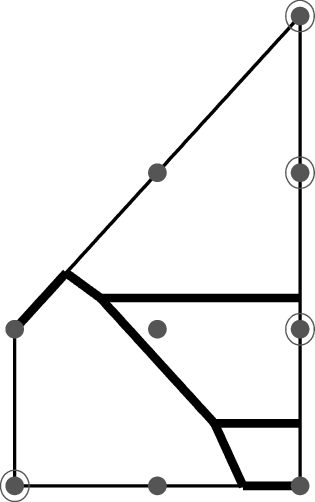} & &
\includegraphics[width=3.5cm,angle=-90]{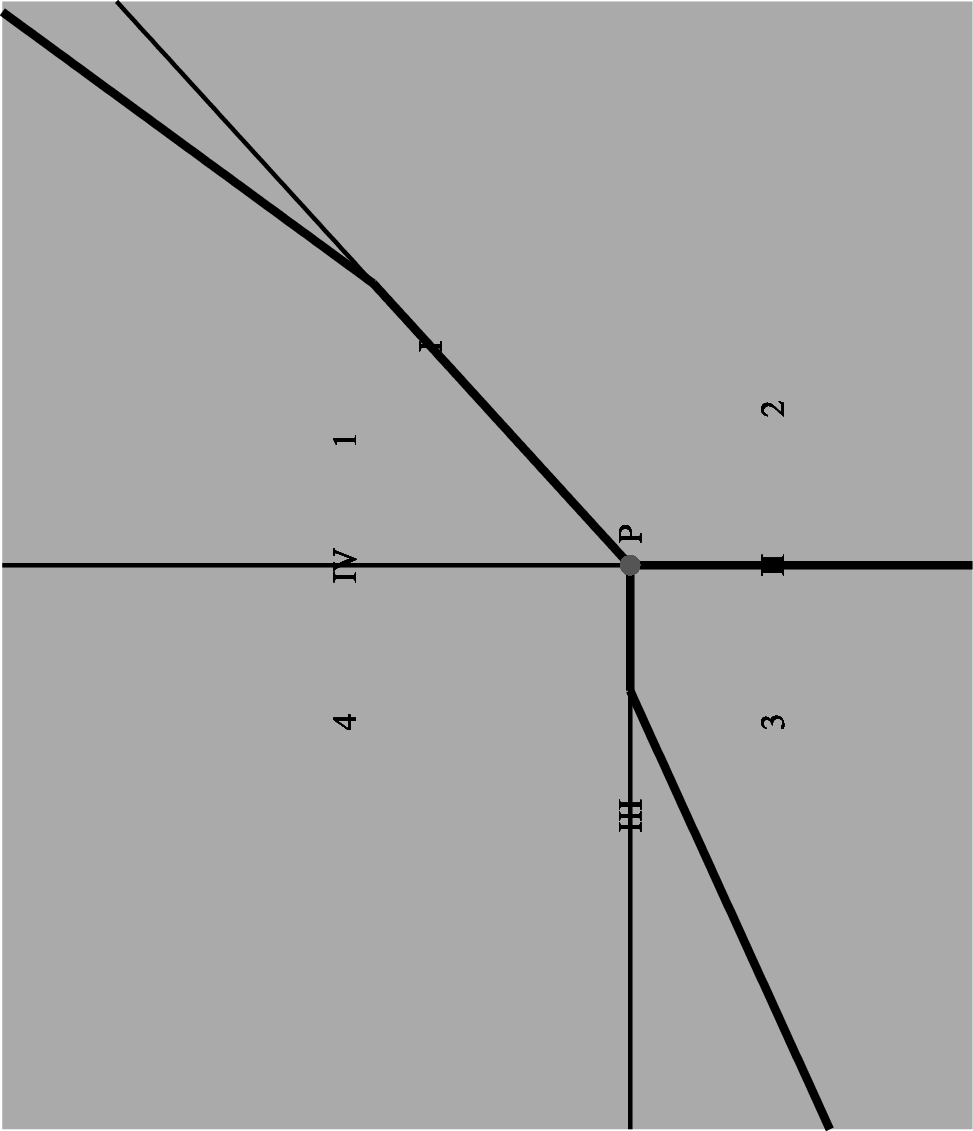} \\
 }
 \end{array}
\end{centering}
\]
\caption{Explosion of the fan and implosion of the polytope. The marked points
correspond to non-zero coefficients $a_m$.}
\label{fig3}
\end{figure}
Dually, the map $id-\pi$ explodes the fan along positive 
codimension cones. In particular, the origin is exploded to $P$.
In Figure \ref{fig3}, we consider the non-generic 
Laurent polynomial 
\[
a_1 + a_2 t^{-0.6} x + a_3 t^{-0.4}x^2+a_4 t^{1.8} \frac{y^2}{x},
\] 
its tropical amoeba for $\mathbb{P}^2$ blown-up at one point and the corresponding 
images under the maps $\pi$ and $id-\pi$. Note that, for this
non generic polynomial, part of the compact amoeba $\pi\Atrop$ lies in 
the boundary of $P$. Only in $\breve P$ does $\pi\Atrop$ coincide with the
tropical non-Archimedean amoeba.

\subsection{Amoebas associated to geometric quantization}

\renewcommand{\labelenumi}{\bf\theenumi}
\renewcommand{\theenumi}{\alph{enumi}.}
As we could observe in Remark \ref{ex1}, the behavior of the limit amoeba defined
via a fixed valuation is rather unstable. Actually, it does not only depend on the
choice of $\psi$, but also behaves badly under integer translations of the
moment polytope $P$. These shortcomings are somehow overcome by a specific choice of
valuation that is associated with a toric variety and a large K\"ahler structure limit
with quadratic $\psi$. This construction provides the natural link between the
convergence of sections to delta distributions considered in Section \ref{lcsl} that
comes out of geometric quantization, and the consideration of the image of hypersurfaces
defined by the zero locus of generic sections.

Geometric quantization motivates considering the hypersurfaces defined by
\[
 \breve{Y}_s^{\rm{GQ}} = \{p\in X_P: \sum_{m\in P\cap\Z^n} a_m \xi^m_s(p) = 0 \},
\]
where $a_m\in\C^*$, and $\xi^m_s$ are the $L^1$-normalized holomorphic sections
converging to delta distributions, as in Section \ref{lcsl}. A simple estimate of the
order of decay of $\|\sigma^m_s\|_1$ as $s\to\infty$ gives, in the notation
of Lemma \ref{convex},
\begin{eqnarray*}
  \|\sigma^m_s\|_1^{-1} \frac{\dd}{\dd s} \|\sigma^m_s\|_1 & = & 
  \|\sigma^m_s\|_1^{-1} \frac{\dd}{\dd s} \int\limits_P \e^{-h^s_m(x)} \dd x = \\
  & = & \| \e^{-h^s_m} \|_1^{-1} \frac{\dd}{\dd s} \int\limits_P \e^{-h^0_m(x)-s f_m(x)} \dd x = \\
  & = & - \int\limits_P f_m(x) \frac{\e^{-h^s_m(x)}}{\| \e^{-h^s_m} \|_1} \dd x \to -f_m(m) = \psi(m) .
\end{eqnarray*}
Therefore, we call the limit of the family of amoebas $\mu_P(Y_s)$, where
\[
 \breve{Y}_{s}=\{ w\in(\C^{*})^{n}:\sum_{m\in P\cap\Z^{n}} a_{m} \e^{-s\psi(m)} w^{m}=0 \},
\]
the GQ limit amoeba, $\Alim^{\rm GQ}$. The fact
that for this choice of valuations, inspired by geometric quantization, the
limit amoeba keeps away from integral points in $P$ is consistent with the
convergence of the holomorphic sections $\xi^m_s$ to delta distributions
supported on the Bohr-Sommerfeld fibers corresponding to those integral
points.

The behavior of this ``natural'' (from the point
of view of geometric quantization) choice of valuation is
illustrated in Figure \ref{fig4} for different quadratic
$\psi$. Note, in particular, that in the case $G_2$ there are
parts of the tropical amoeba $\Atrop$ that lie outside
$\L_\psi P$ and get projected onto subsets of faces (with
non-empty interior in the relative topology). Below, we will
give a complete characterization of the GQ limit amoeba
in this situation.

\begin{figure}[!h]
\[
% \cdiag{
 \xymatrix@C=0.5cm@R=0.2cm{
\includegraphics[width=3cm,keepaspectratio]{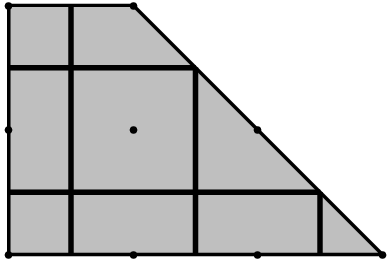}   &
\includegraphics[width=3cm,keepaspectratio]{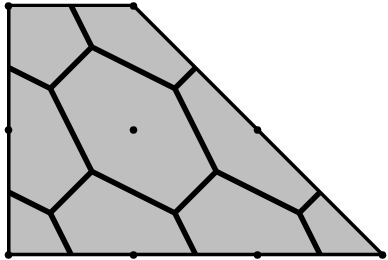}   &
\includegraphics[width=3cm,keepaspectratio]{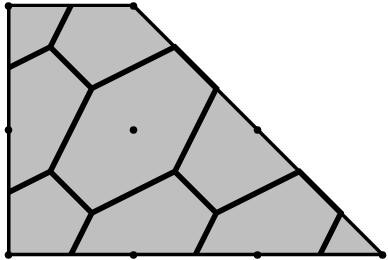} \\
 G_0 = \idmax & G_1 = \frac 14 \gmax & G_2=G_1^{-1} % = \frac 49 \ggmax
 \\
 }
\]
\caption{GQ amoebas associated to different quadratic $\psi$'s.}
\label{fig4}
\end{figure}

We start by observing that from the point of view
of equivalence of toric varietes described by different Delzant polytopes,
the GQ limit amoeba is well behaved:

\begin{prop} Let $\psi\in C^\infty_{\textrm{Hess} > 0}(P)$.
 \begin{enumerate}
  \item \label{GQtra} For any integer vector $k\in\Z^n$, setting $\widetilde{P}=P+k$ and
   $\widetilde{\psi}(\widetilde{x})=\psi(\widetilde{x}-k)$, we have
\[
 \widetilde{\A}_{\rm lim}^{\rm GQ} = \Alim^{\rm GQ}+k ,
\]
  \item \label{GQrot} For any base change $A\in Sl(n,\Z)$ of the lattice $\Z^n$, setting
  $\widetilde{P}=AP$ and $\widetilde{\psi}(\widetilde{x}) = \psi( A^{-1}\widetilde{x})$,
  we have
\[
 \widetilde{\A}_{\rm lim}^{\rm GQ} =  A \Alim^{\rm GQ} . 
\]
 \end{enumerate}
\end{prop}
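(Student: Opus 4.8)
The plan is to reduce both parts to the characterization $\L_\psi\Alim^{\rm GQ}=\pi\Atrop$ furnished by Theorem \ref{amoeba_thm} (taken with the geometric-quantization valuation $v(m)=\psi(m)$), and then to track how each of the four ingredients entering that formula — the tropical amoeba $\Atrop$, the Legendre transform $\L_\psi$ of (\ref{ltr}), the outward cones $\CC_p(P)$, and the projection $\pi$ of Lemma \ref{proj} — transforms under the two operations. First I would check that the transformed data again satisfy the hypotheses: since $\Hess_{\widetilde x}\widetilde\psi$ is congruent to $\Hess\psi$ (equal to it in part (\ref{GQtra}), conjugated by $A^{-1}$ in part (\ref{GQrot})), it is positive definite, so $\widetilde\psi\in C^\infty_+(\widetilde P)$; moreover $\widetilde P$ is again Delzant with $\widetilde P\cap\Z^n=(P\cap\Z^n)+k$, respectively $A(P\cap\Z^n)$, because $k\in\Z^n$ and $A\in Sl(n,\Z)$ preserve the lattice. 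Hence Theorem \ref{amoeba_thm} applies verbatim to the tilde data, giving $\widetilde\L_{\widetilde\psi}\,\widetilde{\A}_{\rm lim}^{\rm GQ}=\widetilde\pi\,\widetilde{\A}_{\rm trop}$. I would also note at the outset that, the tropical limit depending only on the valuation $\psi(m)$ and the support and not on the coefficients $a_m\in\C^*$, no relabelling of the $a_m$ needs to be tracked.

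For the translation case (\ref{GQtra}) I would establish three transformation laws. Writing $\widetilde m=m+k$, the defining tropical polynomial becomes $\max_m\{\t(m+k)u-\psi(m)\}=\t k\,u+\max_m\{\t m\,u-\psi(m)\}$, and since adding an affine-linear function does not move the corner locus, $\widetilde{\A}_{\rm trop}=\Atrop$. The chain rule gives $\widetilde\L_{\widetilde\psi}(\widetilde x)=\L_\psi(\widetilde x-k)$, so the image $\widetilde\L_{\widetilde\psi}\widetilde P=\L_\psi P$ is unchanged; and translation leaves the fan, hence the cones, invariant, $\CC_{p+k}(\widetilde P)=\CC_p(P)$. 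These three facts make the partition (\ref{part}) and the projection coincide, $\widetilde\pi=\pi$, so that $\widetilde\L_{\widetilde\psi}\,\widetilde{\A}_{\rm lim}^{\rm GQ}=\pi\Atrop=\L_\psi\Alim^{\rm GQ}$; I would then invert $\widetilde\L_{\widetilde\psi}(\,\cdot\,)=\L_\psi(\,\cdot\,-k)$ to conclude $\widetilde{\A}_{\rm lim}^{\rm GQ}=\Alim^{\rm GQ}+k$.

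For the base-change case (\ref{GQrot}) I would derive the analogous laws. The tropical polynomial transforms as $\max_m\{\t(Am)u-\psi(m)\}=\max_m\{\t m\,(\t A u)-\psi(m)\}$, so $\widetilde{\A}_{\rm trop}=\t A^{-1}\Atrop$; the chain rule gives $\widetilde\L_{\widetilde\psi}(\widetilde x)=\t A^{-1}\L_\psi(A^{-1}\widetilde x)$; and the defining inequalities of the cones give $\CC_{Ap}(AP)=\t A^{-1}\CC_p(P)$. Combining the last two, each cell of (\ref{part}) for the tilde data equals $\t A^{-1}\big(\L_\psi(p)+\CC_p(P)\big)$, so $\widetilde\pi=\t A^{-1}\circ\pi\circ\t A$ and hence $\widetilde\pi\,\widetilde{\A}_{\rm trop}=\t A^{-1}\pi\Atrop$. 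Theorem \ref{amoeba_thm} for the tilde data then reads $\widetilde\L_{\widetilde\psi}\,\widetilde{\A}_{\rm lim}^{\rm GQ}=\t A^{-1}\L_\psi\Alim^{\rm GQ}$, and since $\t A^{-1}\L_\psi(A^{-1}\widetilde x)\in\t A^{-1}\L_\psi\Alim^{\rm GQ}$ is equivalent to $A^{-1}\widetilde x\in\Alim^{\rm GQ}$, inverting $\widetilde\L_{\widetilde\psi}$ gives $\widetilde{\A}_{\rm lim}^{\rm GQ}=A\Alim^{\rm GQ}$.

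I do not expect a deep obstacle: once Theorem \ref{amoeba_thm} is invoked for the transformed data, what remains is linear bookkeeping with the $Sl(n,\Z)$ action and its transpose-inverse. The only point needing a little care is the final set-theoretic inversion of the Legendre transforms, but this is unproblematic because $\psi\in C^\infty_+(P)$ has positive-definite Hessian on all of $P$, so $\L_\psi$ is a genuine diffeomorphism of $P$ onto $\L_\psi P$ (not merely of interiors); the same holds for $\widetilde\L_{\widetilde\psi}$, and the inversions above are then literal.
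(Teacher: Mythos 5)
Your proof is correct and follows essentially the same route as the paper: both reduce the claim to the transformation laws of the tropical amoeba and of the Legendre transform under translation by $k\in\Z^n$ and under $A\in Sl(n,\Z)$, and then invoke Theorem \ref{amoeba_thm} with the valuation $v(m)=\psi(m)$. The only difference is one of completeness: the paper records just $\widetilde{\A}_{\rm trop}=\Atrop$ (resp.\ $\widetilde{\A}_{\rm trop}=\t A^{-1}\Atrop$) and the covariance of $\L_\psi$, concluding the translation case via the intersection $\L_\psi P\cap\Atrop$, whereas you additionally track the cones $\CC_p(P)$ and the projection $\pi$ of Lemma \ref{proj} — precisely the bookkeeping the paper leaves implicit, and what is needed to cover non-generic valuations for which $\pi\Atrop$ and $\L_\psi P\cap\Atrop$ differ.
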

\begin{proof} \ref{GQtra}: since $\widetilde{\psi}(\widetilde{m}=m+k)=\psi(m)$,
$\widetilde{\A}_{\rm trop}=\Atrop$; on the other hand,
\[
 \pd{\widetilde{\psi}}{x}\vert_{\widetilde{x}=x+k} = \pd{\psi}{x}\vert_x
\]
thus $\L_\psi P = \L_{\widetilde{\psi}} \widetilde{P}$ and
\[
 \L_\psi P \cap \Atrop = \L_{\widetilde{\psi}} \widetilde{P}\cap \widetilde{\A}_{\rm trop} .
\]

\ref{GQrot}: similarly, since $\widetilde{\psi}(\widetilde{m}=Am)=\psi(m)$ and
the tropical amoeba $\widetilde{\A}_{\rm trop}$ is defined via the functions
\[
 \widetilde{u} \mapsto \t \widetilde{m} \widetilde{u}-\widetilde{\psi}(\widetilde{m})
 = \t m \t A \widetilde{u}-\psi(m)
\]
it follows that $\widetilde{\A}_{\rm trop} = \t A^{-1}\Atrop$; for the Legendre transforms
one finds
\[
 \pd{\widetilde{\psi}}{x}\vert_{\widetilde{x}=Ax} = \t A^{-1} \pd{\psi}{x}\vert_x
\]
which proves the second claim.
\end{proof}

Actually, as is to be expected from the convergence of the sections defining
$\Alim^{\rm GQ}$ to delta distributions, these amoebas never intersect lattice
points:

\begin{prop} For any strictly convex $\psi\in C^\infty_{\textrm{Hess} > 0}(P)$, the GQ amoeba $\Alim^{\rm GQ}$
stays away from lattice points in the interior of $P$, that is,
\[
 \Alim^{\rm GQ}\cap\iP\cap\Z^n = \emptyset .
\]
\end{prop}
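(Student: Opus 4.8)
The plan is to transport the problem to the tropical side via Theorem \ref{amoeba_thm} and reduce it to the fact that, for the geometric-quantization valuation $v(m)=\psi(m)$, each interior lattice point sits in the \emph{interior} of a top-dimensional cell of the tropical subdivision dual to $\psi$, never on a wall. Concretely, Theorem \ref{amoeba_thm} gives $\L_\psi\Alim^{\rm GQ}=\pi\Atrop$, where $\Atrop$ is the corner locus of the convex piecewise-linear function
\[
 \phi(u)=\max_{m'\in P\cap\Z^n}\bigl\{\t{m'}u-\psi(m')\bigr\}.
\]
I would first record that $\L_\psi=\pd{\psi}{x}$ is a diffeomorphism from $\iP$ onto the interior of $\L_\psi P$ (its derivative $\Hess_x\psi$ being positive definite), and that by the computation in Lemma \ref{proj} the outward cone $\CC_p(P)$ collapses to $\{0\}$ for every interior point $p\in\iP$. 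Hence $\pi$ restricts to the identity on the interior of $\L_\psi P$, so for $m\in\iP\cap\Z^n$ one has $m\in\Alim^{\rm GQ}$ exactly when $\L_\psi(m)\in\pi\Atrop$, and since $\L_\psi(m)$ is an interior point this is equivalent to $\L_\psi(m)\in\Atrop$.

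It then suffices to prove that $u_0:=\L_\psi(m)=\pd{\psi}{x}(m)$ lies off the corner locus, i.e. that $\phi$ is affine near $u_0$. I would do this by showing that the maximum defining $\phi(u_0)$ is attained \emph{uniquely} at the index $m'=m$. This is precisely the strict subgradient inequality for the strictly convex $\psi$: for every $m'\in P\cap\Z^n$ with $m'\neq m$,
\[
 \psi(m')>\psi(m)+\t{(m'-m)}\pd{\psi}{x}(m),
\]
which, upon substituting $u_0=\pd{\psi}{x}(m)$, rearranges to
\[
 \t m\,u_0-\psi(m)>\t{m'}u_0-\psi(m').
\]
Thus the affine function indexed by $m$ strictly dominates all others at $u_0$; by continuity the maximizing index remains $m$ on a whole neighborhood of $u_0$, so $\phi$ agrees there with the single affine map $u\mapsto\t m u-\psi(m)$ and is differentiable at $u_0$. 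Therefore $u_0\notin\Atrop$, and combining with the previous paragraph yields $\Alim^{\rm GQ}\cap\iP\cap\Z^n=\emptyset$.

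The argument is essentially immediate once set up, so there is no serious obstacle; the only points needing care are bookkeeping ones. First, one must justify $\pi=\mathrm{id}$ on the interior and the resulting equivalence $\L_\psi(m)\in\pi\Atrop\Leftrightarrow\L_\psi(m)\in\Atrop$, which relies on $\pi^{-1}(y)=\{y\}$ for interior $y$. Second, the reduction tacitly uses that the monomial at $m$ genuinely contributes to $\Atrop$; this is automatic in the generic GQ setting where all $a_m\in\C^*$, so that the max in $\phi$ runs over every lattice point of $P$, including our interior $m$. The conceptual content is simply that the GQ choice $v(m)=\psi(m)$ makes the Legendre image $\L_\psi(m)$ the unique point at which only the $m$-th term is active in the tropical max, which is exactly the tropical shadow of the convergence of $\xi^m_s$ to the delta distribution at the Bohr-Sommerfeld fiber $\mu_P^{-1}(m)$.
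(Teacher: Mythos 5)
Your argument is correct and is essentially the paper's own proof: both pass to the tropical side via Theorem \ref{amoeba_thm} and show that at $u_m=\L_\psi(m)$ the affine function $u\mapsto \t m u-\psi(m)$ strictly dominates every other $u\mapsto \t{m'}u-\psi(m')$, so that $u_m$ avoids the corner locus $\Atrop$; the inequality you derive from the strict subgradient inequality for $\psi$ at $m$ is literally the same one the paper obtains from strict convexity of the Legendre dual $h$ (via $\pd{h}{u}\vert_{u_{m'}}=m'$). Your write-up is marginally more self-contained, since it proves the key inequality directly from convexity of $\psi$ rather than through Legendre duality, and it also spells out the reduction step — that $\L_\psi$ maps $\iP$ diffeomorphically onto the interior of $\L_\psi P$, where $\pi$ has singleton fibers, so membership in $\pi\Atrop$ is equivalent to membership in $\Atrop$ there — which the paper leaves implicit.
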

\begin{proof} We consider the functions that were used to define the tropical amoeba,
\[
 \eta_m( u ) = \t m u - \psi( m ), \qquad \forall m\in P\cap\Z^n
\]
and observe that for $u=u_m=\L_\psi m=\pd{\psi}{x}\vert_m$ this is the value of the Legendre
transform $h$ at $u_m$,
\[
 \eta_m(u_m) = h(u_m)
\]
where $h(u) = \t x(u) u - \psi(x(u)$. To show that
\[
 \eta_m(u_m) > \eta_{\widetilde{m}}(u_m) \qquad \forall \widetilde{m}\in P\cap\Z^n, \
 \widetilde{m} \neq m
\]
we use convexity of the Legendre transform $u\mapsto h(u)$, namely
\begin{eqnarray*}
 \eta_m(u_m) = h(u_m) & > & h(u_{\widetilde{m}}) + \t (u_m-u_{\widetilde{m}})\pd{h}{u}\vert_{u_{\widetilde{m}}} = \\
 & = & \t \widetilde{m} u_{\widetilde{m}} -\psi(\widetilde{m}) + \t (u_m-u_{\widetilde{m}})\widetilde{m} = \eta_{\widetilde{m}}(u_m)
\end{eqnarray*}
where we used the fact that
\[
 \pd{h}{u}\vert_{u_{\widetilde{m}}} = \pd{h}{u}\vert_{\pd{\psi}{x}\vert_{\widetilde{m}}} = \widetilde{m} .
\]
\end{proof}

When $\psi$ is quadratic it is possible to characterize the GQ limit
amoeba completely in terms of the limit metric on $P$ only. Let $F_p$ denote
the minimal face containing any given point $p\in\partial P$. Note that for
any point $x\in P$,
\[
 x \in F_p \iff (x-p) \perp_G \CC^G_p
\]
and, more generally, for any $x\in P$ and $c\in\CC^G_p$
\begin{equation}
 \| x+c-p \|_G^2 = \| x-p \|_G^2 + \| c \|_G^2 -2\|x-p\|_G\|c\|_G\cos\alpha \label{pyth}
\end{equation}
where $\alpha = \angle_G(x-p,c) \geq \frac \pi2$ since $\t c G (p-x) \geq 0$
by definition of $\CC_p^G(P)$.

\begin{prop} Let $\psi(x) = \frac{\t x G x}{2}+\t b x$ with $\t G = G > 0$; then a point $p\in P$
belongs to $\Alim^{\rm GQ}$ if and only if one of the following conditions holds:
\begin{enumerate}
 \item \label{intsct} There are (at least) two lattice points 
$m_1\neq m_2\in P\cap\Z^n$ such that
\[
 \|p-m_1\|_G = \|p-m_2\|_G = \min_{m\in P\cap\Z^n}\{ \|p-m\|_G\} .
\]
 \item \label{squeezed} $p\in\partial P$ and 
the unique closest lattice point does not lie in the
face $F_p$.
\end{enumerate}
\end{prop}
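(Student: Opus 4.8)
The plan is to run everything through Theorem~\ref{amoeba_thm}, using that for quadratic $\psi(x)=\frac{\t xGx}{2}+\t bx$ the Legendre transform $\L_\psi(x)=Gx+b$ is affine, so that the whole picture becomes a Voronoi diagram. First I would record the elementary identity, valid for all $q\in\R^n$ and lattice points $m_1,m_2$ (the $\t b$-terms cancel),
\[
 \eta_{m_1}(\L_\psi q)-\eta_{m_2}(\L_\psi q)=\tfrac12\big(\|q-m_2\|_G^2-\|q-m_1\|_G^2\big),
\]
where $\eta_m(u)=\t mu-\psi(m)$ are the functions defining $\Atrop$. Hence $\arg\max_m\eta_m(\L_\psi q)=\arg\min_m\|q-m\|_G$, and the corner locus of $u\mapsto\max_m\eta_m(u)$ pulls back under $\L_\psi$ to the set $W:=\{q:\ \|q-\cdot\|_G\text{ attains its minimum over }P\cap\Z^n\text{ at }\geq 2\text{ points}\}$, i.e. the walls of the $G$-Voronoi diagram of $P\cap\Z^n$. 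In parallel, since $\|Gv\|_{G^{-1}}=\|v\|_G$, the best-approximation description of $\pi$ in the Remark shows that $\Pi:=\L_\psi^{-1}\circ\pi\circ\L_\psi$ is the $G$-orthogonal projection of $\R^n$ onto $P$, with fibres $\Pi^{-1}(p)=p+\CC_p^G(P)$ (the variational inequality for the $G$-projection is exactly $\t(q-p)G(p-p')\geq0\ \forall p'\in P$). Theorem~\ref{amoeba_thm} then reads $\Alim^{\rm GQ}=\Pi(W)$, so that $p\in\Alim^{\rm GQ}$ if and only if $(p+\CC_p^G(P))\cap W\neq\emptyset$.

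With this reformulation I would split according to the number of nearest lattice points of $p$. If $p$ has at least two nearest points then $p\in W$, and $\Pi(p)=p$ gives $p\in\Alim^{\rm GQ}$; conversely this is precisely condition~\ref{intsct}. So I may assume a \emph{unique} nearest point $m^*$, whence $p$ lies in the interior of the convex cell $\mathrm{Vor}(m^*)=\{q:\|q-m^*\|_G\leq\|q-m\|_G\ \forall m\}$, whose recession cone is $R=\{c:\ \t cG(m-m^*)\leq0\ \forall m\in P\cap\Z^n\}$. A ray $p+tc$ with $c\in\CC_p^G(P)$ meets $W$ for some $t\geq0$ exactly when it leaves $\mathrm{Vor}(m^*)$: if $c\in R$ the ray stays in the interior of the cell (disjoint from $W$), while if $c\notin R$ it crosses $\partial\,\mathrm{Vor}(m^*)\subseteq W$. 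Therefore $(p+\CC_p^G(P))\cap W\neq\emptyset$ if and only if $\CC_p^G(P)\not\subseteq R$; note that this forces $\CC_p^G(P)\neq\{0\}$, hence $p\in\partial P$, matching condition~\ref{squeezed}.

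It then remains to identify $\CC_p^G(P)\not\subseteq R$ with $m^*\notin F_p$, and this is the core of the argument. One inclusion is a cone computation using the noted characterisation $x\in F_p\iff(x-p)\perp_G\CC_p^G(P)$: if $m^*\in F_p$ then $\t cG(m^*-p)=0$ for all $c\in\CC_p^G(P)$, while $\t cG(m-p)\leq0$ for all $m\in P$ by definition of the cone, so adding these gives $\t cG(m-m^*)\leq0$, i.e. $\CC_p^G(P)\subseteq R$. For the converse, if $m^*\notin F_p$ there is $c_0\in\CC_p^G(P)$ with $\t c_0G(m^*-p)\neq0$, necessarily $<0$ since $m^*\in P$; as $c_0\in\CC_p^G(P)$ means $p$ maximises $p'\mapsto\t c_0Gp'$ over $P$, the point $m^*$ lies strictly below this maximum, so $p$ lies in the proper face $F^{c_0}$ of $P$ on which $\t c_0G\,\cdot$ is maximal and $m^*\notin F^{c_0}$.

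The main obstacle is exactly this converse, and it is where the structure of the moment polytope is used decisively: one needs an \emph{actual} lattice point, not merely the non-lattice point $p$, to witness that $m^*$ is no longer nearest. This is supplied by the Delzant hypothesis that $P$ is a lattice polytope, so the face $F^{c_0}$ has a vertex $m\in P\cap\Z^n$, for which $\t c_0G(m-m^*)=\t c_0G(p-m^*)>0$; hence $c_0\notin R$ and $\CC_p^G(P)\not\subseteq R$. Combining the two cases yields the stated dichotomy: $p\in\Alim^{\rm GQ}$ precisely when condition~\ref{intsct} or condition~\ref{squeezed} holds.
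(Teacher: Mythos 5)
Your proof is correct. Its first half coincides with the paper's: for quadratic $\psi$ one completes the square in the $\eta_m$, so that $\Atrop$ pulls back under $\L_\psi$ to the walls $W$ of the $G$-Voronoi diagram of $P\cap\Z^n$, and condition \ref{intsct} is exactly membership in $W$. The divergence is in how the cone over $p$ is shown to meet, or miss, the walls when $p$ has a unique nearest lattice point $m^*$. The paper computes distances explicitly along rays using the identity (\ref{pyth}): if $m^*\in F_p$ the cross terms vanish and $m^*$ stays strictly nearest on the whole cone, which therefore misses $\Atrop$; if $m^*\notin F_p$ it compares against a lattice point $m\in F_p\cap\Z^n$ and lets $\tau\to\infty$ to force a crossing. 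You instead reformulate everything through the $G$-projection $\Pi=\L_\psi^{-1}\circ\pi\circ\L_\psi$ and the recession cone $R$ of the Voronoi cell of $m^*$: the cone over $p$ meets $W$ iff $\CC_p^G(P)\not\subseteq R$, and $\CC_p^G(P)\subseteq R$ iff $m^*\in F_p$, proved by summing cone inequalities in one direction and, in the other, by exhibiting $c_0\in\CC_p^G(P)$ with $\t c_0 G(p-m^*)>0$ together with a lattice vertex of the supporting face $F^{c_0}$ (where the paper uses a lattice point of $F_p$ instead). Both arguments rest on the same mechanism --- a ray from an interior point of a Voronoi cell either stays inside the cell or crosses a wall --- and both invoke the lattice-polytope hypothesis to produce an integral witness. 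Your structural version buys two things. First, the wall-crossing criterion becomes a clean equivalence rather than two separate estimates. Second, it needs only one good direction $c_0$, which you construct explicitly; the paper asserts its large-$\tau$ inequality, with $\cos\alpha<0$, for \emph{every} $c\in\CC_p^G(P)\setminus\{0\}$, but the hypothesis $m^*\notin F_p$ guarantees the strict inequality $\t c G(p-m^*)>0$ only for \emph{some} $c$ (e.g.\ in the relative interior of the cone), not for all of them, since for $c$ on the boundary of the cone the pairing with $p-m^*$ may vanish. So your existential formulation is exactly what the argument requires and quietly repairs this small imprecision in the paper's proof.
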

\begin{rem} The two conditions are, evidently, mutually exclusive:
from the description of the map $\pi$ it is evident that the inverse image of the
intersection of the tropical amoeba $\Atrop$ with $\L_\psi P$ is always a subset of
$\Alim$. This is taken care of by \ref{intsct}, while \ref{squeezed} describes the
parts of the GQ limit amoeba that arise from parts of $\Atrop$ that are ``smashed
on the boundary'' by the convex projection. In particular, for points in the interior
condition \ref{squeezed} is irrelevant.
\end{rem}
\begin{proof} \ref{intsct}: For quadratic $\psi$
and $u=\L_\psi x =Gx+b$, take
\begin{eqnarray*}
 \eta_m(u) & = & \t m u - \psi(m) = \t m u - (\frac{\t mGm}{2}+\t bm) = \\
 & = & \big(\t (\L_\psi m)G^{-1}u-\t b G^{-1} u\big) - \frac 12\big( \|\L_\psi m\|_{G^{-1}}^2 - 
 \| b \|_{G^{-1}}^2 \big) = \\
 & = & \frac 12 \big( \|u-b\|^2_{G^{-1}} - \|u-\L_\psi m\|^2_{G^{-1}} \big) .
\end{eqnarray*}
Since the first term is independent of $m$, it is irrelevant for the locus
of non-differentiability that defines the tropical amoeba,
\begin{eqnarray*}
 \Atrop & = & C^0\!-{\rm loc} \big( u\mapsto \max_{m\in P\cap\Z^n}
 \{ \eta_m(u) \} \big) = \\
 & = & C^0\!-{\rm loc} \big( u\mapsto \frac 12 \|u-b\|^2_{G^{-1}} -
 \min_{m\in P\cap\Z^n} \{ \frac 12 \|u-\L_\psi m\|^2_{G^{-1}} \} \big) .
\end{eqnarray*}
Therefore, $u$ lies in the tropical amoeba if and only if there are two
distinct lattice points $m_1\neq m_2$ in $P$ such that
\[
 \|u-\L_\psi m_1\|_{G^{-1}} = \|u-\L_\psi m_2\|_{G^{-1}} \iff \|p-m_1\|_G = \|p-m_2\|_G
\]
for $u=\L_\psi p \in \L_\psi P$. Taking into account that
$\L_\psi\Alim \supset \Atrop \cap \L_\psi P$, this proves that condition \ref{intsct}
is necessary and sufficient for $u$ to belong to this intersection.
Thus, either $p$ satisfies \ref{intsct} or if it belongs to $\Alim$ then it belongs to $\Alim\setminus \Atrop$.

\ref{squeezed}: Fix $p\in\partial P$. First we show that if there is a unique nearest
lattice point, say $m_p$, that lies in the face of $p$, $F_p$, then $p\notin\Alim$. By
the definition of $\Alim$, we have to show that
\[
 \forall c\in\CC_p^G(P): \qquad \L_\psi p+c\notin\Atrop,
\]
which follows in particular if $\eta_{m_p}(\L_\psi p+c) > \eta_m(\L_\psi p+c)$ for any
lattice point $m\neq m_p$. By the reasoning above, this is equivalent to
\[
 \| \L_\psi p +c - \L_\psi m_p \|^2_{G^{-1}} < \| \L_\psi p+c - \L_\psi m \|^2_{G^{-1}} ,
\]
which follows straight from equation (\ref{pyth}) (with $\cos \alpha = 0$).

For the other implication, assume $m_p\notin F_p$. Then,
\[
 \eta_{m_p}(\L_\psi p) > \eta_m(\L_\psi p), \qquad \forall m\in P\cap\Z^n, m\neq m_p
\]
and it suffices to show that for any $m\in F_p\cap \Z^n$ and $c\in\CC_p^G(P)\setminus\{0\}$
\begin{equation}
 \| \L_\psi p +\tau c -\L_\psi m \|^2_{G^{-1}} < \| \L_\psi p +\tau c -\L_\psi m_p \|^2_{G^{-1}}
 \label{sqzeq2}
\end{equation}
for some $\tau > 0$ large enough. The left hand side equals 
\[
 \| \L_\psi p +\tau c -\L_\psi m \|^2_{G^{-1}} = \| p-m \|^2_G+\tau^2 \| c \|_{G^{-1}}^2
\]
whereas the right hand side gives
\[
 \| \L_\psi p +\tau c -\L_\psi m_p \|^2_{G^{-1}} = \| p-m_p \|^2_G+\tau^2 \| c \|_{G^{-1}}^2 -
  2 \tau \| p-m_p \|_G \| c \|_{G^{-1}} \cos \alpha
\]
where $\cos \alpha < 0$. Subtracting therefore the left hand side of inequality
(\ref{sqzeq2}) from the right hand side, we are left with the expression
\[
 \| \L_\psi p +\tau c -\L_\psi m_p \|^2_{G^{-1}} -  \| \L_\psi p +\tau c -\L_\psi m \|^2_{G^{-1}} = 
 -  2 \tau \| p-m_p \|_G \| c \|_{G^{-1}} \cos \alpha \to + \infty,
\]
as $\tau\to\infty$, which finishes the proof.
\end{proof}

\subsection{Relation to other aspects of degeneration of K\"ahler structures}
\label{hms}

Degenerating families of K\"ahler structures have been studied from a variety of
viewpoints. In this section we would like to briefly address aspects of the
relation of the present work to some of those.

The first link is to the occurence of torus fibrations in mirror symmetry,
following \cite{SYZ} (see also \cite{Au}). As described in Section \ref{sect_tangentcone}, the K\"ahler metrics along a geodesic ray $g_P+\varphi+s\psi$ collapse, when rescaled appropriately, to a Hessian metric on the moment polytope $P$. The metric and/or affine structure the limit amoeba $\Alim$ inherits for certain combinations of valuation $v(m)$ and direction $\psi$ could be of interest to the SYZ approach to mirror symmetry 
\cite{GW,KS}, in the case when $P$ is
reflexive. Even though the induced metric on the complex hypersurfaces $Y_s\subset X_P$ will not in general be
Ricci flat, it is not inconceivable that by carefully choosing the available
parameters one might obtain the desired asymptotic behaviour.

\begin{ex}\label{ex_quarticP4} Consider the tropical version of a quartic surface in $\mathbb{P}^3$, with moment polytope given by the tetrahedron
\[
 P = \langle \vect{-1,-1,-1}, \vect{3,-1,-1}, \vect{-1,3,-1}, \vect{-1,-1,3} \rangle .
\]
If we set the valuation to be $1$ on these vertices, $0$ on the origin, and sufficiently negative on the other lattice points in $P$, we obtain a tropical amoeba $\Atrop$ whose ``nucleus'' is a tetrahedron $Q$ with vertices
\[
 Q := \langle \vect{1,1,1}, \vect{-1,0,0}, \vect{0,-1,0}, \vect{0,0,-1} \rangle .
\]
Choosing $\psi$ quadratic corresponding to the matrix
\[
 \psi \sim \frac{1}{4} \left[ \begin{array}{ccc} 2 & 1 & 1 \\ 1 & 2 & 1 \\ 1 & 1 & 2 \end{array} \right]
\]
we obtain a transformed polyhedron $\mathcal{L}_{\psi} P = -Q$, and the image of the projection
$\pi\Atrop \subset \mathcal{L}_{\psi} P$ is the octahedron $O$ with vertices
\[
 O := \langle \pm\vect{\frac{1}{2},\frac{1}{2},0}, \pm\vect{\frac{1}{2},0,\frac{1}{2}}, \pm\vect{0,\frac{1}{2},\frac{1}{2}} \rangle .
\]
The situation is depicted in Figure \ref{fig17}.
\begin{figure}
 \begin{centering}
 \includegraphics[width=9cm]{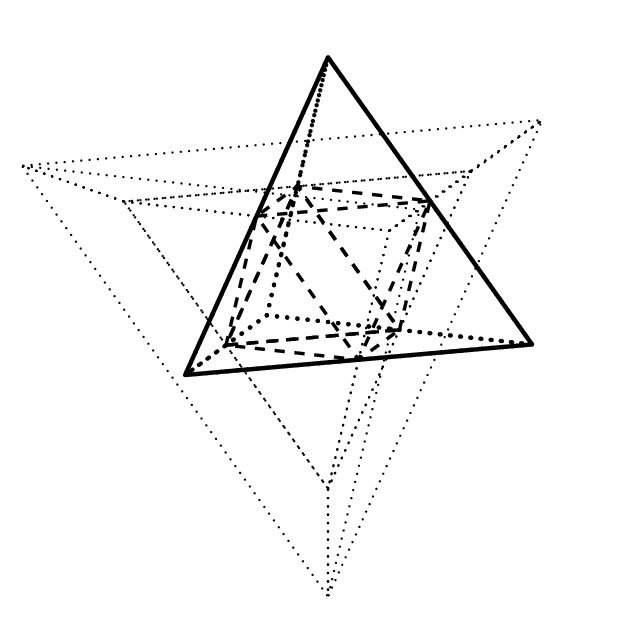}
 \end{centering}
 \caption{Example of the projection of a tropical amoeba for a quartic surface in $\mathbb{P}^3$, with $\Atrop$ dotted, $\mathcal{L}_{\psi} P$ solid, and $\pi\Atrop$ dashed.}
 \label{fig17}
\end{figure}
Note that from the construction and the drawing it is clear that the compact amoeba $\pi\Atrop$ inherits an affine structure (from the Legendre transformed coordinates on the moment polyhedron). It is, however, nonsingular even around the vertices.

There exists an entirely analogous example for the quintic in $\mathbb{P}^4$, which however is more difficult to draw; the tropical amoeba's ``nucleus'' is
\[
 Q = \langle \vect{1,1,1,1}, \vect{-1,0,0,0}, \vect{0,-1,0,0}, \vect{0,0,-1,0} \vect{0,0,0,-1} \rangle  = - \mathcal{L}_{\psi} P ,
\]
which again is symmetric to the Legendre transformed moment polyhedron. The (projection to the first three dimensions of the) intersection of $\pi\Atrop$ with a facet $F$ of $\mathcal{L}_{\psi} P$ is shown in Figure \ref{fig18}.
\begin{figure}
 \begin{centering}
 \includegraphics[width=5.5cm]{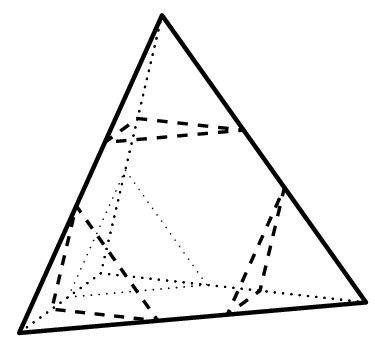}
 \end{centering}
 \caption{The interior of the dashed polyhedron corresponds to the intersection of $\pi\Atrop$ with a facet $F$ of $\mathcal{L}_{\psi} P$ for a quintic in $\mathbb{P}^4$.}
 \label{fig18}
\end{figure}
The affine structure apparently has singularities along the  line segments (twenty, overall) where $\pi\Atrop$ meets the edges of $\mathcal{L}_{\psi} P$.
\end{ex}

In a different context, in \cite{Pa1,Pa2} Parker considers degenerating families
of almost complex structures in an extension of the smooth category constructed
using symplectic field theory, to study holomorphic curve invariants. The typical
behaviour of his families of almost complex structures, depicted for the moment
polytope in the case of $\mathbb{P}^2$ in the introduction of \cite{Pa1}, is, in
the case of toric manifolds, remarkably reproduced in our approach. In fact, in
the notation of sections \ref{prelim} and \ref{largepol}, 
\[
\sum_{l,k=1}^n (G_0)_{jk}(G_0+s \Hess \psi)^{-1}_{kl} \frac{\partial}{\partial y_l} =
  \frac{\partial}{\partial y_j^s},
\]
where $J_s (\frac{\partial}{\partial \theta_j}) = \frac{\partial}{\partial y^s_j}$.
Therefore, in interior regions of $P$, where as $s\to\infty$ the term with
$\Hess \psi$ dominates, we have that the coordinates $y^s$ appear stretched relative
to the coordinates $y$ by an $y$-dependent transformation that scales with $s$. On
the other hand, as we approach a face $F$ of $P$ where some coordinates $l_j$ vanish,
the derivatives of $g_P$ with respect to these $l_j$'s will dominate, so that the
corresponding $y^s_{l_j}$'s do not scale with respect to the $y_{l_j}$'s. In the
directions parallel to $F$, however, we still have the term with $\Hess \psi$
dominating and for these directions the scaling with $s$ will occur. This is exactly
the qualitative behaviour described  in \cite{Pa1}. We note, however, that
in our approach this behaviour is implemented by deforming integrable toric complex
structures.

\vspace{1cm}
\textbf{Acknowledgements:} We wish to thank Miguel Abreu for many
useful conversations and for suggesting possible applications to tropical geometry. We would also like to thank the referees for thoughtful suggestions that led to substantial improvements in content and exposition.
This work is partially supported by the Center for 
Mathematical Analysis, Geometry and Dynamical Systems, IST, the Centro de Matem\'atica da Universidade do Porto, and by Funda\c c\~ao para a 
Ci\^encia e a Tecnologia through the Program POCI 2010/FEDER and by the projects 
POCI/MAT/58549/2004 and PPCDT/MAT/58549/2004. 
TB is also supported by Funda\c{c}\~ao para a Ci\^encia e a Tecnologia through the fellowships
SFRH/BD/22479/2005  and PTDC/MAT/098770/2008.

%%%%%%%%%%%%%%%%%%%%%%%%%%%% References %%%%%%%%%%%%%%%%%%%%%%%

{\small \baselineskip 3.8mm}{\small \par}

\end{document}